\newcommand{\xMapsto}[2][]{\ext@arrow 0599{\Mapstofill@}{#1}{#2}}
\def\Mapstofill@{\arrowfill@{\Mapstochar\Relbar}\Relbar\Rightarrow}
\definecolor{winered}{rgb}{0.5,0,0}
\newtheorem{theorem}{Theorem}[section]
\newtheorem{lemma}[theorem]{Lemma}
\newtheorem{corollary}[theorem]{Corollary}
\newtheorem{theorem-definition}[theorem]{Theorem-Definition}
\newtheorem{proposition}[theorem]{Proposition}
\theoremstyle{definition}
\newtheorem{definition}[theorem]{Definition}
\newtheorem{remark}[theorem]{Remark}
\newtheorem{warning}[theorem]{Warning}
\newtheorem*{theorem*}{Theorem}
\numberwithin{equation}{section} \numberwithin{figure}{section}
\numberwithin{equation}{section}
\newcommand{\Z}{\mathbb{Z}}
\newcommand{\E}{\E_{\infty}}
\newcommand{\la}{\lambda}
\newcommand{\cU}{\mathcal{U}}
\newcommand{\HDco}{\mathsf{D}^{\mathsf{co}}}
\newcommand{\Dco}{\mathcal{D}^{\mathsf{co}}}
\newcommand{\De}{\mathcal{D}}
\newcommand{\HDe}{\mathsf{D}}
\newcommand{\HH}{\operatorname{HH}}
\newcommand{\HHcx}{\operatorname{\mathcal{CH}}}
\newcommand{\HN}{\operatorname{HN}}
\newcommand{\HNcx}{\operatorname{\mathcal{CN}}}
\newcommand{\HCcx}{\operatorname{\mathcal{CC}}}
\newcommand{\coHH}{\operatorname{coHH}}
\newcommand{\coHHcx}{\operatorname{co\mathcal{CH}}}
\newcommand{\coHN}{\operatorname{coHN}}
\newcommand{\coHNcx}{\operatorname{co\mathcal{CN}}}
\newcommand{\coHC}{\operatorname{coHC}}
\newcommand{\coHCcx}{\operatorname{co\mathcal{CC}}}
\newcommand{\hocolim}{\operatorname{hocolim}}
\newcommand{\Gr}{\operatorname{Gr}}
\newcommand{\im}{\operatorname{Im}}
\newcommand{\cone}{\operatorname{cone}}
\newcommand{\Ba}{\mathrm{B}}
\newcommand{\cotensorte}{\boxempty^{\tau^e}_{C^e_0}}
\DeclareMathOperator{\Ob}{Ob}
\DeclareMathOperator{\Mod}{\!-Mod}
\DeclareMathOperator{\Comod}{\!-Comod}
\newcommand{\dgCat}{\mathsf{dgCat}} 
\newcommand{\ptdco}{\mathsf{cuCoa}^{\mathsf{ptd}}_*}
\newcommand{\Ho}{\operatorname{Ho}}
\newcommand{\kob}[1]{{k[\Ob(#1)]}}
\newcommand{\id}{\mathbf{1}}
\newcommand{\op}{^{\mathsf{op}}}
\DeclareMathOperator{\Hom}{Hom}
\DeclareMathOperator{\RHom}{R\mathcal{Hom}}
\DeclareMathOperator{\uHom}{\mathcal{Hom}}
\begin{document}

\title[]{Koszul duality and Calabi-Yau structures}
\author{Julian Holstein}
\address{Julian Holstein\\Fachbereich Mathematik\\Universität Hamburg\\
Bundesstraße 55, 20146 Hamburg, Germany}
	\email{julian.holstein@uni-hamburg.de}

\author{Manuel Rivera}
\address{
Manuel Rivera\\
Purdue University\\Department of Mathematics\\ 150 N. University St. West Lafayette, IN 47907, USA
}
\email{manuelr@purdue.edu}

\begin{abstract}
    We show that Koszul duality between differential graded categories and pointed curved coalgebras interchanges smooth and proper Calabi-Yau structures. 
    This result is a generalization and conceptual explanation of the following 
    two applications. For a finite-dimensional Lie algebra 
    a smooth Calabi-Yau structure on the universal enveloping algebra is equivalent to 
    a proper Calabi-Yau structure on the Chevalley-Eilenberg chain coalgebra, which 
    exists if and only if  Poincaré duality is satisfied. 
    For a topological space $X$ having the homotopy type of a finite complex we show an oriented Poincaré duality structure (with local coefficients) on $X$ is equivalent to a proper Calabi-Yau structure on the dg coalgebra of chains on $X$ and to a smooth Calabi-Yau structure on the dg algebra of chains on the based loop space of $X$. 
\end{abstract}
\maketitle

\tableofcontents
\section{Introduction}

One of the many versions of Koszul duality provides a bar-cobar adjunction
\begin{eqnarray*}
    \Ba \colon \mathsf{dgAlg}_k^{\text{aug}} \rightleftarrows \mathsf{dgCoalg}_k^{\text{conil}} \colon \Omega
\end{eqnarray*} 
between the categories of augmented dg algebras and conilpotent dg coalgebras over a field $k$ that defines an equivalence of suitable homotopy theories \cite{lefèvrehasegawa2003surlesainfinicategories, positselski2011two, positselski2023differential}. In this article, we prove that, under this duality, smooth (proper) Calabi-Yau structures on dg algebras correspond to proper (smooth) Calabi-Yau structures on dg coalgebras. In fact, we prove this result for a many-object version of Koszul duality that relates dg categories and pointed curved coalgebras \cite{holstein2022categorical}. 
We also consider the case that $k$ is a principal ideal domain whenever possible. For ease of exposition, in this introduction we explain the statement and its significance in more detail in the case of conilpotent dg coalgebras and augmented dg algebras over a field.

A dg $k$-algebra $A$ is said to be smooth if it is compact in the derived category of dg $A$-bimodules \cite{kontsevich2008notes}.
The notion of weak equivalence in the context of dg $A$-bimodules is quasi-isomorphism. A dg algebra $A$ is smooth if and only if it is quasi-isomorphic  to a retract of a finite iterated extension of shifts of $A^e=A \otimes A^{\text{op}}$. 
A smooth $n$-Calabi-Yau structure on a smooth dg algebra $A$ is a cycle $\alpha \in \HNcx_n(A)$ in the negative cyclic complex of $A$ inducing an equivalence of $A$-bimodules $ \phi(\alpha) \colon A^! \xrightarrow{\simeq} A[n]$ in the derived category, where $A^!$ is defined as the derived dual $\RHom_{A^e}(A, A^e)$ \cite{ginzburg2007calabiyaualgebras, brav2019relative}.
On the other hand, $A$ is proper if it is perfect (or, equivalently, compact) in the derived category of $k$-modules. Thus, smooth and proper are two different finiteness conditions on a dg algebra. A proper $n$-Calabi-Yau structure on a proper dg algebra $A$ is a dual cyclic chain $\HCcx_*(A) \to k[n]$ inducing an equivalence of $A$-bimodules $A[-n] \xrightarrow{\simeq} A^*$ in the derived category, where $A^*$ is the $k$-linear dual of $A$. 

In this article, we consider a notion of properness for coalgebras. We define a conilpotent dg $k$-coalgebra $C$ to be \textit{proper} if it is compact in the ``coderived category" of dg $C$-bicomodules. The coderived category is defined not through quasi-isomorphisms of dg $C$-bicomodules but rather by a stronger notion of weak equivalence: maps of dg $C$-bicomodules that become quasi-isomorphisms of $\Omega C$-bimodules after applying a version of the cobar functor. This cobar functor may be understood as a certain twisted tensor product with $\Omega C$. A dg coalgebra $C$ is proper if and only if it is weakly equivalent to a retract of a $C$-bicomodule whose underlying graded $k$-module is finite dimensional. 
A proper dg coalgebra $C$ has a ``dual" object $C^\vee$ when considered as a dg $C$-bicomodule.
If $C$ is finite-dimensional, this is just the linear dual $\uHom_k(C,k)$ with its natural dg $C$-bicomodule structure.

A \emph{proper $n$-Calabi-Yau structure} on a proper dg coalgebra $C$ is a cycle $\beta \in \coHNcx_n(C)$ in the negative cocyclic complex of $C$ inducing an equivalence of $C$-bicomodules $\varphi(\beta) \colon C^\vee \xrightarrow{\simeq} C[n]$ in the coderived category.
We also define the notion of weakly smooth dg coalgebras and  smooth Calabi-Yau structures on them, corresponding to a proper Calabi-Yau structure on a proper dg algebra. 
In the context of augmented dg algebras and conilpotent dg coalgebras, our main theorem says the following.

\begin{theorem}\label{thm:mainintro}
A proper $n$-Calabi-Yau structure on a proper conilpotent dg coalgebra $C$ is equivalent to a smooth $n$-Calabi-Yau structure on the augmented dg algebra $\Omega C$, where $\Omega$ denotes the cobar functor.

A proper $n$-Calabi-Yau structure on a proper augmented dg algebra $A$ is equivalent to a smooth $n$-Calabi-Yau structure on the conilpotent dg coalgebra $BA$, where $B$ denotes the bar functor. 
\end{theorem}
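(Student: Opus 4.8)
The plan is to deduce both statements from a single enhancement of Koszul duality to the level of bimodules and bicomodules, combined with a cyclic refinement of the bar--cobar comparison. Since $\Omega$ and $\Ba$ are mutually quasi-inverse on the relevant homotopy categories, the two assertions are formally dual, obtained from one another by exchanging the roles of algebras and coalgebras and of the two finiteness conditions, so I would treat the coalgebra-to-algebra direction in detail and then transpose. Write $A = \Omega C$. The central input is that the many-object Koszul duality of \cite{holstein2022categorical} upgrades to an equivalence between the homotopy category of dg $A$-bimodules (quasi-isomorphisms inverted) and the coderived category $\Dco$ of dg $C$-bicomodules (the stronger weak equivalences of the excerpt inverted). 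I would realize this as the bicomodule-over-bimodule version of the twisted tensor product with $\Omega C$, checking that it is exact and carries a set of compact generators to compact generators.

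With this equivalence in hand, the first task is to match the structural data on the two sides. I would show that it sends the diagonal bicomodule $C$ to the diagonal bimodule $A$, the coenveloping coalgebra $C^e$ to $A^e = A \otimes A\op$, and, crucially, the coalgebraic dual $C^\vee$ to the inverse dualizing object $A^! = \RHom_{A^e}(A, A^e)$. This last identification is the heart of the matter: on the coalgebra side $C^\vee$ is, for finite-dimensional $C$, the naive linear dual $\uHom_k(C,k)$ equipped with its bicomodule structure, whereas $A^!$ is a derived Hom into $A^e$; reconciling them is exactly what the coderived formalism is built to do, the cobar functor converting the linear dual into the expected derived bimodule dual. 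At the same level one verifies that the equivalence interchanges the finiteness conditions, so that $C$ is compact in $\Dco$ (proper) if and only if $A$ is compact among dg $A$-bimodules (smooth); this is essentially forced, since the coderived weak equivalences were defined precisely so that compactness is preserved by cobar.

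The second task is the cyclic comparison. I would promote the known quasi-isomorphism between the coHochschild complex of $C$ and the Hochschild complex of $\Omega C$ to an $S^1$-equivariant, i.e. mixed-complex, quasi-isomorphism, yielding $\coHNcx_*(C) \simeq \HNcx_*(\Omega C)$ on negative (co)cyclic complexes and hence a bijection between cycles $\beta \in \coHNcx_n(C)$ and $\alpha \in \HNcx_n(\Omega C)$. It then remains to check that, under the equivalence of the first step and with the object identifications above, the induced bicomodule map $\varphi(\beta) \colon C^\vee \to C[n]$ corresponds to the bimodule map $\phi(\alpha) \colon A^! \to A[n]$, which I expect to follow from naturality of the assignment $\beta \mapsto \varphi(\beta)$. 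Because the functor of the first step is an equivalence of categories it preserves and reflects isomorphisms, so $\varphi(\beta)$ is an isomorphism in $\Dco$ exactly when $\phi(\alpha)$ is an isomorphism in the derived category of bimodules; thus $\beta$ is a proper Calabi--Yau structure precisely when $\alpha$ is a smooth one. The second statement is then obtained by the dual construction, now using the cyclic complex $\HCcx_*(A)$ and the linear dual $A^*$ on the algebra side, matched with a smooth Calabi--Yau structure on $\Ba A$.

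The hard part, I expect, is the identification $C^\vee \simeq A^!$ together with the verification that the cyclic comparison is genuinely $S^1$-equivariant rather than a mere chain-level isomorphism of Hochschild complexes: the Connes operator and the full mixed structure must be transported compatibly, since the Calabi--Yau data live in negative cyclic homology and not just in Hochschild homology. A secondary but persistent technical point is to run the whole argument over a principal ideal domain rather than a field, which demands care with the duality $(-)^\vee$ and with flatness and projectivity of the bimodules and bicomodules involved.
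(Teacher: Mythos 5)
Your treatment of the first statement is essentially the paper's own argument: the module--comodule Koszul duality $\Dco(C^e) \simeq \De((\Omega C)^e)$ (via $(\Omega C)^e \simeq \Omega(C^e)$, Lemma \ref{lem:omegaop} and Theorem \ref{thm:modulekoszul}), the identification of the diagonal bicomodule $C$ with the diagonal bimodule $\Omega C$ (Lemma \ref{lem:aec}), hence of properness with smoothness (Proposition \ref{prop:smoothproper}) and of $C^\vee$ with $(\Omega C)^!$ (Lemma \ref{lem:omegacshriek}), the mixed-complex quasi-isomorphism $\coHHcx_*(C) \simeq \HHcx_*(\Omega C)$ giving $\coHNcx_*(C) \simeq \HNcx_*(\Omega C)$ (Proposition \ref{prop:cohhishh}), and finally the compatibility of $\varphi(\beta)$ with $\phi(\alpha)$ under an equivalence, which preserves and reflects invertibility (the commutative diagram in the proof of Theorem \ref{thm:main}). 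Your two flagged difficulties ($C^\vee \simeq (\Omega C)^!$ and the $S^1$-equivariance of the comparison) are exactly the points the paper isolates.

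The gap is in your claim that the second statement is ``formally dual, obtained \ldots by exchanging the roles of algebras and coalgebras and of the two finiteness conditions,'' so that it follows by transposition. No such formal duality exists: the derived category of bimodules and the coderived category of bicomodules are genuinely asymmetric, and the condition on $\Ba A$ that is Koszul dual to local properness of $A$ is \emph{not} obtained by dualizing any compactness or perfectness condition. If you transpose naively --- say, declaring a coalgebra $C$ smooth when $C$ is perfect as a $C^e$-comodule, mirroring the definition of a smooth algebra --- you get a strictly stronger condition than the correct one: by Remark \ref{rmk:weaklysmooth}, perfectness over $C^e$ corresponds to $\Omega C$ lying in the triangulated subcategory of $(\Omega C)^e$-modules generated by $k$, which is stronger than local properness, so the transposed statement would be false as an ``if and only if.'' What your proposal is missing is the adjoint-functor characterization: $A$ is locally proper precisely when $A \otimes^L_{A^e} -$ admits a right adjoint of the form $A^* \otimes^L_k -$, and this characterization (not a compactness condition) is what must be transported through Koszul duality to define weak smoothness of a coalgebra and its dualizing object $C^! \simeq G(A^*)$ (Definition \ref{def:cysmooth}, Proposition \ref{prop:cshriek}). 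Only with these definitions in place can one run the diagram argument parallel to the first statement, now comparing $\uHom(\HHcx_*(A),k)$ with $\uHom(\coHHcx_*(C),k)$ and the maps $A \to A^*[n]$, $C[n] \to C^!$ (Lemma \ref{lem:smoothcymap}, Theorem \ref{thm:dualmain}). The resulting proof is structurally parallel to the first, as the paper itself notes, but it is a second argument built on genuinely different definitions, not a formal transposition of the first one.
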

These are special cases of Theorems \ref{thm:main} and \ref{thm:dualmain}, which give the corresponding statement for Calabi-Yau structures on dg categories and pointed curved coalgebras (recalled in Section \ref{sect:ptdco}). The proof of Theorem \ref{thm:mainintro} relies on the equivalence of the coderived category of dg $C$-bicomodules and the derived category of dg $\Omega C$-bimodules and a natural isomorphism between negative cocyclic homology of $C$ and negative cyclic homology of $\Omega C$. 

Our results may be understood as a natural generalization of the following two phenomena, which now follow as applications of Theorem \ref{thm:mainintro}. 

First note that the notion of proper Calabi-Yau structure, which, in the finite dimensional case, establishes a shifted duality between linear duals, is reminiscent of Poincaré duality of closed oriented manifolds. 
In the context of coalgebras, where the notion of equivalence is stronger than quasi-isomorphism, we have to strengthen the duality condition and the existence of a proper Calabi-Yau structure is equivalent to a map to the shifted linear dual that remains an equivalence when taking coefficients in bicomodules.
If our coalgebra is cocommutative, or if it 
has an $E_2$-coalgebra structure and the resulting cobar dg bialgebra has the property of being a Hopf algebra, this simplifies to 
coefficients in (one-sided) comodules. 
This is satisfied in the topological setting of singular chains and the proper Calabi-Yau structure is equivalent to a Poincar\'e duality structure with local coefficients. Thus our first application is the following:

 \begin{theorem}\label{thm:poincareintro} Let $(X,b)$ be a pointed path-connected topological space having the homotopy type of a finite simplicial complex and $\alpha_X \in C_n(X;k)$ an $n$-cycle in the complex of singular chains on $X$. The following are equivalent:
 \begin{enumerate}
 \item (Poincaré duality structure) Taking the cap product with $\alpha_X$ induces a degree $n$ quasi-isomorphism
\[ C^{*}(X;\ell) \to C_{n-*}(X; \ell),\]
from cochains to chains with coefficients in any local system (left $k[\pi_1(X,b)]$-module) $\ell$. 
 \item (Proper Calabi-Yau structure) The cycle $\alpha_X$ induces a proper Calabi-Yau structure on the conilpotent dg coalgebra of pointed normalized singular chains $C_*(X,b;k)$.
 \item (Smooth Calabi-Yau structure) The cycle $\alpha_X$ induces a smooth Calabi-Yau structure on the augmented dg algebra $C_*(\Omega_bX;k)$ of singular chains on the space $\Omega_bX$ of Moore loops in $X$ based at $b$.
 \end{enumerate}
 \end{theorem}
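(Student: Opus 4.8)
The plan is to prove $(2)\Leftrightarrow(3)$ by transporting Theorem~\ref{thm:mainintro} across the classical identification of the cobar construction with loop space chains, and to prove $(1)\Leftrightarrow(2)$ by reducing the defining bicomodule equivalence of a proper Calabi--Yau structure to a one-sided statement that unwinds to Poincar\'e duality. First I would verify the finiteness hypotheses. Since $X$ has the homotopy type of a finite complex, the pointed normalized singular chains $C:=C_*(X,b;k)$ are weakly equivalent to the finite-dimensional chain coalgebra of a finite complex, so $C$ is a proper conilpotent dg coalgebra; dually $C_*(\Omega_b X;k)$ will be smooth. The essential geometric input for $(2)\Leftrightarrow(3)$ is the comparison theorem (due to Adams in the simply connected case and Rivera--Zeinalian in general): there is a natural quasi-isomorphism of augmented dg algebras $\Omega C \xrightarrow{\simeq} C_*(\Omega_b X;k)$ identifying the cobar construction of the chain coalgebra with the chains on the space of Moore loops based at $b$.

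With this identification, it remains to match the cyclic data before invoking Theorem~\ref{thm:mainintro}. The cycle $\alpha_X$ determines the cocyclic class $\beta \in \coHNcx_n(C)$ appearing in $(2)$, and under the natural isomorphism $\coHNcx_*(C)\cong\HNcx_*(\Omega C)$ supplied by the proof of Theorem~\ref{thm:mainintro} this corresponds to a negative cyclic class on $C_*(\Omega_b X;k)$. Feeding these into the first statement of Theorem~\ref{thm:mainintro} yields that $\beta$ is a proper $n$-Calabi--Yau structure on $C$ if and only if the corresponding class is a smooth $n$-Calabi--Yau structure on $\Omega C \simeq C_*(\Omega_b X;k)$, which is precisely $(2)\Leftrightarrow(3)$.

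For $(1)\Leftrightarrow(2)$ I would unwind $(2)$: it asserts that $\varphi(\beta)\colon C^\vee \to C[n]$ is an equivalence of $C$-bicomodules in the coderived category. Because the based loop space chains $C_*(\Omega_b X;k)\simeq\Omega C$ form a dg Hopf algebra, the two-sided coefficient condition collapses to a one-sided one, so that this equivalence is detected by cotensoring against the one-sided comodules corresponding, via $H_0(\Omega C)=k[\pi_1(X,b)]$, to arbitrary local systems $\ell$. The next step is the chain-level identification showing that cotensoring $C^\vee$ and $C[n]$ with the comodule attached to $\ell$ produces $C^*(X;\ell)$ and the shift $C_{n-*}(X;\ell)$ respectively, and that $\varphi(\beta)$ is carried to the cap product $-\cap\alpha_X$. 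Granting this, the coderived equivalence of $(2)$ holds if and only if $-\cap\alpha_X$ is a quasi-isomorphism for every local system $\ell$, which is exactly $(1)$.

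The hard part will be these two ingredients of $(1)\Leftrightarrow(2)$: first, using the Hopf structure on $C_*(\Omega_b X;k)$ to legitimately replace the two-sided coefficient condition in the coderived category by the one-sided family of local systems; and second, the explicit comparison of the Koszul-dual coefficient functors with the singular cochain and chain complexes, so that the abstract duality map $\varphi(\beta)$ is recognized at the chain level as capping with the fundamental cycle $\alpha_X$. The remaining steps are either finiteness bookkeeping or direct appeals to Theorem~\ref{thm:mainintro} and the cobar--loop space comparison.
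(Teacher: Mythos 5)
Your architecture matches the paper's: (2)$\Leftrightarrow$(3) via Theorem \ref{thm:main} together with the Rivera--Zeinalian quasi-isomorphism $\Omega C_*(X,b;k)\simeq C_*(\Omega_bX;k)$, and (1)$\Leftrightarrow$(2) by reducing the bicomodule condition to one-sided coefficients through a Hopf-type argument (the paper's Lemma \ref{lem:hopf}), combined with the passage from local systems over $H_0$ to arbitrary dg-module coefficients (the paper's Lemma \ref{lem:locsytems}, a spectral sequence argument), and the chain-level recognition of the duality map as $-\frown\alpha_X$ (the paper's Theorem \ref{thm:poincare}). However, there is a genuine gap. A proper Calabi--Yau structure is, by Definition \ref{def:cyproper}, a cycle in the \emph{negative cocyclic} complex $\coHNcx_n(C)$, and your proposal never constructs this lift: you simply assert that ``$\alpha_X$ determines the cocyclic class $\beta\in\coHNcx_n(C)$.'' What your (1)$\Rightarrow$(2) argument actually produces is a non-degenerate class in $\coHHcx_n(C)$, i.e.\ only a \emph{weak} proper Calabi--Yau structure. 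The paper closes this in Proposition \ref{prop:mfdpropercy} using the identifications $\coHH_*(C)\cong H_*(L|K|;k)$ and $\coHN_*(C)\cong H^{S^1}_*(L|K|;k)$: the class of $\alpha_X$ enters coHochschild homology through the constant-loop inclusion $|K|\hookrightarrow L|K|$, and since constant loops lift to $S^1$-equivariant homology this class lifts canonically to $\coHN_n(C)$. Without this step (or a substitute) you do not obtain statement (2) as defined.

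Two smaller points. First, neither $\Omega C$ nor $C_*(\Omega_bX;k)$ is literally a dg Hopf algebra --- an antipode need not exist --- so the two-sided-to-one-sided reduction cannot be invoked for them directly; the paper passes to the localized cobar construction $\widehat{\Omega} C$ (inverting the group-like $1$-simplices), which is a genuine dg Hopf algebra whose antipode squares to a map chain homotopic to the identity, exactly the hypothesis needed for the $\RHom$ half of Lemma \ref{lem:hopf}. Second, the two directions of (1)$\Leftrightarrow$(2) are not symmetric, and your plan over-engineers one of them: for (2)$\Rightarrow$(1) no Hopf machinery is needed, since the bicomodule weak equivalence becomes a quasi-isomorphism between free $(\Omega C)^e$-modules of the form $E\otimes^{\tau^e}(\Omega C)^e$, and one simply specializes coefficients to the module $\ell^e$ built from a local system via the projection $p_0\colon \Omega C\to k[\pi_1(X,b)]$ and the augmentation. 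All of the hard ingredients you name are required only for (1)$\Rightarrow$(2), where your identification of them is correct.
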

 This result places previous results \cite{brav2019relative, cohencalabi} about Calabi-Yau structures and Poincar\'e-duality into the context of Koszul duality. It uses the existence of a natural quasi-isomorphism of dg algebras between $\Omega C_*(X,b;k)$ and the singular chains on $\Omega_bX$, a result extending classical theorem of Adams' to the non-simply connected setting \cite{rivera-zeinalian, riveracobar}. In particular, using that any representative of the fundamental class of an oriented closed manifold $X$ satisfies (1), (2) may be interpreted as a chain level lift of classical Poincaré duality to a homotopy coherent (or $A_{\infty}$-) map of $C_*(X,b;k)$-bicomodules, which is a weak equivalence (a notion which is strictly stronger than quasi-isomorphism in the non-simply connected context). We give meaning to this statement even when $k$ is a PID. Furthermore, (2) packages the Poincaré duality structure in terms of minimal chain-level data.  This is potentially useful when studying and computing the string topology of non-simply connected manifolds, where one uses explicit models for chain level Poincaré duality to construct and study intersection type operations on spaces of loops and paths or Hochschild complexes.

For a second example we consider a finite-dimensional Lie algebra $\mathfrak g$ satisfying
Poincar\'e duality, which holds if and only if $\mathfrak g$ is unimodular.
We may translate the Poincar\'e duality map to a proper Calabi-Yau structure on the Chevalley-Eilenberg chain coalgebra, which is equivalent to a smooth Calabi-Yau structure on the Koszul dual algebra.
\begin{theorem}[Theorem \ref{thm:liekoszul}]
    Let $\mathfrak g$ be an $n$-dimensional Lie algebra. 
    Then the Chevalley-Eilenberg chain coalgebra $C_*(\mathfrak g)$ has a proper $n$-Calabi-Yau structure if and only if the universal enveloping algebra $\mathcal U(\mathfrak g)$ has a smooth $n$-Calabi-Yau structure.
    
    This is the case if and only if $\mathfrak g$ is unimodular.
\end{theorem}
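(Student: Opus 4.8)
The plan is to deduce the first equivalence directly from Theorem \ref{thm:mainintro} and to reduce the unimodularity criterion to an explicit computation of the inverse dualizing bimodule on the algebra side.

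First I would recall the classical Koszul duality for Lie algebras: the Chevalley-Eilenberg chain coalgebra $C_*(\mathfrak g) = (\Lambda^\bullet \mathfrak g, d_{\mathrm{CE}})$ is the Koszul dual coalgebra of the universal enveloping algebra, so that the cobar construction $\Omega C_*(\mathfrak g)$ is quasi-isomorphic as a dg algebra to $\mathcal U(\mathfrak g)$ (equivalently, the bar construction $B\mathcal U(\mathfrak g)$ is quasi-isomorphic to $C_*(\mathfrak g)$, both computing Lie algebra homology $H_*(\mathfrak g)$). Since $\mathfrak g$ is $n$-dimensional, $C_*(\mathfrak g) = \Lambda^\bullet \mathfrak g$ is finite-dimensional, hence a proper conilpotent dg coalgebra by the properness criterion recalled in the introduction. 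Theorem \ref{thm:mainintro} then immediately yields the asserted equivalence between a proper $n$-Calabi-Yau structure on $C_*(\mathfrak g)$ and a smooth $n$-Calabi-Yau structure on $\mathcal U(\mathfrak g)$, in both directions.

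It remains to show that $\mathcal U(\mathfrak g)$ carries a smooth $n$-Calabi-Yau structure precisely when $\mathfrak g$ is unimodular. I would exploit the Chevalley-Eilenberg bimodule resolution $\mathcal U(\mathfrak g) \otimes \Lambda^\bullet \mathfrak g \otimes \mathcal U(\mathfrak g) \xrightarrow{\simeq} \mathcal U(\mathfrak g)$, a finite free resolution of length $n$ of $\mathcal U(\mathfrak g)$ over its enveloping algebra $\mathcal U(\mathfrak g)^e$. This exhibits $\mathcal U(\mathfrak g)$ as smooth and computes the inverse dualizing bimodule $\mathcal U(\mathfrak g)^! = \RHom_{\mathcal U(\mathfrak g)^e}(\mathcal U(\mathfrak g), \mathcal U(\mathfrak g)^e)$ as the dualized complex. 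A direct calculation identifies its cohomology as concentrated in degree $n$ and isomorphic to $\mathcal U(\mathfrak g)$ twisted by the modular character $\mathfrak g \to k$, $x \mapsto \operatorname{tr}(\mathrm{ad}_x)$ arising from the $\mathfrak g$-action on the top exterior power $\Lambda^n \mathfrak g$; that is, $\mathcal U(\mathfrak g)^! \cong \mathcal U(\mathfrak g)_\sigma[n]$, where $\sigma$ is the associated algebra automorphism.

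Finally, the twisted bimodule $\mathcal U(\mathfrak g)_\sigma$ is isomorphic to $\mathcal U(\mathfrak g)$ in the derived category if and only if the modular character vanishes, i.e. if and only if $\mathfrak g$ is unimodular; this is exactly the collapse of Poincaré duality $H^*(\mathfrak g; M) \cong H_{n-*}(\mathfrak g; M \otimes \Lambda^n\mathfrak g)$ to its untwisted form $H^*(\mathfrak g; M) \cong H_{n-*}(\mathfrak g; M)$. The step I expect to be the main obstacle is not the existence of this bimodule isomorphism but its promotion to an honest smooth Calabi-Yau structure, namely producing a negative cyclic class $\alpha \in \HNcx_n(\mathcal U(\mathfrak g))$ whose induced map realizes the equivalence $\mathcal U(\mathfrak g)^! \xrightarrow{\simeq} \mathcal U(\mathfrak g)[n]$. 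In the unimodular case I would build this class from the fundamental class generating $H_n(\mathfrak g; k) \cong k$, verify that it maps to the required generator of Hochschild homology, and lift it along the $S^1$-tower; via the isomorphism $\HNcx_*(\mathcal U(\mathfrak g)) \cong \coHNcx_*(C_*(\mathfrak g))$ underlying Theorem \ref{thm:mainintro}, the same class simultaneously furnishes the proper Calabi-Yau structure on $C_*(\mathfrak g)$.
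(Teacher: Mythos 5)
Your strategy is sound and, in its middle part, genuinely different from the paper's. The first step (finite dimensionality of $C_*(\mathfrak g)$ gives properness, then Theorem \ref{thm:mainintro} together with $\Omega C_*(\mathfrak g) \simeq \mathcal U(\mathfrak g)$ gives the first equivalence) coincides with the paper's. For the unimodularity criterion you work entirely on the algebra side: the Chevalley-Eilenberg bimodule resolution exhibits smoothness and computes $\mathcal U(\mathfrak g)^! \cong \mathcal U(\mathfrak g)_\sigma[n]$ with $\sigma$ the modular automorphism, and since $\mathcal U(\mathfrak g)$ is a domain (PBW) whose only units are scalars, an equivalence $\mathcal U(\mathfrak g)_\sigma \simeq \mathcal U(\mathfrak g)$ in the derived category --- hence an honest bimodule isomorphism, both being concentrated in degree $0$ --- forces $\sigma = \mathrm{id}$, i.e.\ unimodularity. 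This is correct and gives both directions of the weak statement; in particular your argument that a Calabi-Yau structure implies unimodularity is arguably cleaner than the paper's, which stays on the coalgebra side throughout: there, Lie-algebra Poincaré duality with dg coefficients (Theorem \ref{thm:liepd}) combined with Lemma \ref{lem:ceadjunction} produces a weak equivalence of one-sided comodules, cocommutativity (Corollary \ref{cor:cocommutative}) upgrades it to a bicomodule equivalence, and the converse is extracted from duality with coefficients in $\Omega(C_*\mathfrak g)^e$. What the paper's route buys is an explicit chain-level representative of the structure; what yours buys is a self-contained identification of the dualizing bimodule.

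The one genuine gap is exactly the step you flag as the main obstacle: the lift of the weak Calabi-Yau class to $\HNcx_n(\mathcal U(\mathfrak g))$. ``Lift it along the $S^1$-tower'' is not an argument; in general $\HN_n(A) \to \HH_n(A)$ is far from surjective, and there are infinitely many obstructions. In your setting the gap can be closed by a degree argument: writing a negative cyclic cycle as $\sum_i x_i u^i$ with $x_i \in \HHcx_{n+2i}$ and solving $b x_0 = 0$, $Bx_i + bx_{i+1} = 0$ inductively, the obstruction at stage $i$ is a class in $\HH_{n+2i+1}(\mathcal U(\mathfrak g)) \cong H_{n+2i+1}(\mathfrak g; \mathcal U(\mathfrak g)_{\mathrm{ad}})$, which vanishes because the Chevalley-Eilenberg complex has length $n$; hence every degree-$n$ Hochschild class lifts. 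The paper avoids obstruction theory altogether by exhibiting an explicit cocyclic representative: the class is $\theta^{(1)}[\theta^{(2)}]$ for $\theta$ a generator of $\bigwedge^n \mathfrak g$, and $B(\theta^{(1)}[\theta^{(2)}]) = [\theta^{(1)}|\theta^{(2)}] - [\theta^{(2)}|\theta^{(1)}] = 0$ by cocommutativity, so the Hochschild cycle is already a negative cyclic cycle with no higher corrections. Either repair completes your proof.
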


\begin{remark}
    We should explain our nomenclature. 
   The definition of a \emph{proper} dg coalgebra is formally similar to that of a smooth (rather than proper) dg algebra.
   However, the derived category of modules and the coderived category of comodules behave quite differently. 
   Thus we call the natural Calabi-Yau structure on such a coalgebra $C$ \emph{proper} as it identifies $C$ with (a version of) its linear dual, see Definition \ref{def:cyproper}, as is the case for the proper Calabi-Yau structure on a dg algebra (or category). 

   Moreover, coalgebras are often best thought of as (contravariantly equivalent to) pseudocompact algebras and this equivalence identifies those coalgebras Koszul dual to proper algebras with a generalization of the notion of \emph{smooth} pseudocompact algebras, see Remark \ref{rmk:weaklysmooth}. Thus we call them weakly smooth coalgebras and their natural Calabi-Yau structures smooth.

   One word of warning: Smooth Calabi-Yau structures on dg algebras (or dg categories) are also called \emph{left} Calabi-Yau structures as tensoring with the bimodule has a left adjoint, and dually proper Calabi-Yau structures on dg categories are called \emph{right} Calabi-Yau, see \cite{brav2019relative}. 
   We cannot do this for coalgebras, as a proper Calabi-Yau structure on a coalgebra corresponds to the existence of a left adjoint and a smooth Calabi-Yau structure to the existence of a right adjoint.
\end{remark}

\subsection{Related work}
While completing this article, we learned that Matt Booth, Joe Chuang and Andrey Lazarev are also studying the same question and have obtained similar results which have since become available at \cite{booth2025nonsmoothcalabiyaustructuresalgebras}.

The Koszul duality between different kinds of Calabi-Yau structure has appeared in draft form in \cite[Theorem 25]{cohencalabi} as the statement that given two Koszul dual dg algebras $A$ and $B$, i.e.\ $A \simeq R\uHom_B(k,k)$ and $B \simeq R\uHom_A(k,k)$, $A$ has a smooth Calabi-Yau structure if and only if $B$ has a compact Calabi-Yau structure in the sense of \cite{kontsevich2008notes}.
They put this in the context of the Koszul dual Calabi-Yau algebras giving dual field theories, an idea going back to Kontsevich. 
Note that our version of Koszul duality is more general. In particular, when considering the cochain algebra of a topological space, the result of \cite{cohencalabi} would only apply to simply connected spaces, since in order for the algebra of cochains on a space $X$ and the algebra of chains on the based loop space $\Omega X$ to be Koszul dual in their sense one needs restrictions on the fundamental group. However, the \textit{coalgebra} of singular chains on $X$ and the algebra of chains on the based loop space $\Omega X$ are Koszul dual in our sense without any restrictions on the fundamental group. 

The interpretation of Poincaré duality given by (2) in Theorem \ref{thm:poincareintro} may be understood as a strengthening of the structure described in \cite{tradler-zeinalian} by allowing arbitrary coefficients and incorporating the action of the fundamental group (and local coefficients) at the level of chains made possible by working under a notion of weak equivalence stronger than quasi-isomorphism.

Previous proofs that chains on the loop space of an oriented manifold have a smooth Calabi-Yau structure also relate it to Poincar\'e duality with local coefficients, see \cite{cohencalabi} and
\cite[Section 5.1]{brav2019relative}, as well as \cite{jorgensen2008calabi} in the simply connected case. Similar results have also been obtained in \cite{shendetakeda}. Our proof is explicit and self-contained using the bar-cobar formalism. 

Another related result is \cite[Theorem 6.6]{herscovich2023cyclic}, which builds on previous work of Van den Bergh \cite{van2015calabi} and shows that 
a local pseudocompact dg algebra has a smooth $d$-Calabi-Yau structure if and only if the cobar construction of its linear dual has a strong homotopy inner product of degree $d$, which amounts to a compact $d$-Calabi-Yau structure \cite[Theorem 5.3, Corollary 5.4]{herscovich2023cyclic}.
As local pseudocompact algebras are contravariantly equivalent to conilpotent coalgebras, this is a one-object version of our Theorem \ref{thm:dualmain} in the special case of a coalgebra that is smooth (see Remark \ref{rmk:weaklysmooth} for the distinction between smooth and weakly smooth coalgebras).

Poincar\'e duality and the existence of a weak smooth Calabi-Yau structure on the universal enveloping algebra for unimodular Lie algebras have been shown independently in \cite{hazewinkel1970duality} and \cite{he2010cocommutative}. 

\subsection{Outline}
We begin by recalling some necessary background in Section \ref{sect:background}.
In Section \ref{sect:ptdco} we recall the category of pointed curved coalgebras which is Koszul dual to the category of dg categories.
In Section \ref{sect:cat} we exhibit dg categories as monoids in bicomodules, a useful technical trick that makes Koszul duality convenient to state in Section \ref{sect:koszul}.
We recall the basics of Calabi-Yau structures on algebras and categories in Section \ref{sect:cy}.

We proceed to analyse the coderived category of comodules more closely in Section \ref{sect:comodules} where we describe maps in the coderived category as homotopy comodule maps in Corollary \ref{cor:dcohoms}.
This analysis applies in particular without the assumption that $k$ is a field, when the model structure on comodules is not available.
Then we can define proper pointed curved coalgebras and study their dual comodules in Section \ref{sect:coproper}.

In Section \ref{sect:propercy} we define proper Calabi-Yau structures on coalgebras and show
that these exactly correspond to smooth Calabi-Yau structures on the Koszul dual categories, see Theorem \ref{thm:main}.

The dual situation is considered in \ref{sect:smoothcy}, where we define smooth pointed curved coalgebras, and their smooth Calabi-Yau structures and show in Theorem \ref{thm:dualmain} that these correspond to proper Calabi-Yau structures on locally proper dg categories.

In Section \ref{sect:lie} we analyse proper Calabi-Yau structures on cocommutative pointed curved coalgebras and consider the example of the Chevalley-Eilenberg chain coalgebras of unimodular Lie algebras. 

In Section \ref{sect:manifold} we conclude by a detailed analysis of the Calabi-Yau structures associated to topological spaces.
\subsection{Acknowledgements}
The authors are grateful for fruitful conversations on the topics of this article with
Matt Booth, Joe Chuang, Tobias Dyckerhoff, Bernhard Keller, Andrey Lazarev, Alex Takeda, Thomas Tradler, and Mahmoud Zeinalian.

	The first author acknowledges support by the Deutsche Forschungsgemeinschaft (DFG, German Research Foundation) through EXC 2121 ``Quantum Universe'' -- project number 390833306 -- and SFB 1624 ``Higher structures, moduli spaces and integrability'' -- project number 506632645.  The second author acknowledges support by NSF Grant DMS 2405405 and by Shota Rustaveli National Science Foundation of Georgia (SRNSFG) [grant number FR-23-5538].
 
\section{Background}\label{sect:background}
\subsection{Definitions and notation}\label{sect:defs}
We fix a principal ideal domain $k$ as a ground ring, with particular interest in the case $k = \Z$. 
We will often consider the case when $k$ is a field, this will always be indicated explicitly.
As a rule, the theory becomes more convenient and elegant when $k$ is a field, but the key statements continue to hold when $k$ is a general PID.
Undecorated $\Hom$, $\uHom$, $\RHom$ and $\otimes$ are considered over $k$. Ordinary hom spaces will be denoted by $\Hom$ while hom complexes are denoted by $\uHom$ and derived hom complexes by $\RHom$. Derived functors are generally denoted by $L$ and $R$. The symbol $\simeq$ means ``weakly equivalent" and $\cong$ means ``isomorphic".

We use \emph{homological} grading conventions, that is, differentials are of degree $-1$. This puts our twisting cochains into degree $-1$ and differs from the convention in \cite{brav2019relative} but Hochschild homology and the objects in our key examples are naturally homologically graded. We use the Koszul sign convention throughout and do not always specify the signs.

By $\infty$-category we shall mean a quasi-category (with a potentially large set of vertices).

\subsection{Curved coalgebras}\label{sect:ptdco}
Let $C=(C, \Delta, \epsilon, d, h)$ be a pointed curved coalgebra over $k$, also called a curved pointed coalgebra. Recall that this means 
\begin{enumerate}
    \item $(C, \Delta)$ is a graded coassociative coalgebra over $k$.
    If $k$ is not a field we assume that $C$ as a graded $k$-module is free. 
    We will still call $C$ a coalgebra rather than a coring.
      \item $d \colon C \to C$ is a linear endomorphism of degree $-1$, which is a coderivation of the coproduct $\Delta$.
    \item $C$ has a maximal cosemisimple subcoalgebra $C_0$ that is isomorphic to a sum of copies of the ground ring (viewed as trivial coalgebras) and  $\epsilon: C \to C_0$ is a splitting of the natural inclusion compatible with $d$. Furthermore, we require $d$ to be zero on $C_0$. We denote the kernel of $\epsilon$ by $\overline{C}$.
\item $h$ is a curvature for $C$, i.e.\ a homogeneous linear function $h : C \to k$ of degree $-2$ such that $d^2(c) = \{h,c\}$ where the right hand side denotes the commutator with the natural two-sided action of $C^* = \uHom(C,k) \cong \uHom_{C_0}(C,C_0)$ on $C$.

\end{enumerate}
The pointed curved coalgebras together with a final object form the category $\ptdco$. (The morphisms are defined in \cite{holstein2022categorical}.)
\begin{warning}
$C_0$ for us always denotes the maximal cosemisimple, not the degree $0$ part of $C$.
\end{warning}

We will sometimes consider the case that $C$ has finite rank, i.e.\ $C$ is concentrated in finitely many degrees and has finite rank in each degree.

A left $C$-comodule 
 is a  left comodule $M$ over the underlying graded coalgebra of $C$, equipped with a linear endomorphism $d_M$ of degree $-1$ that is compatible with differential and curvature on $C$.
 The splitting of $C_0 \to C$ equips any left $C$-comodule $M$ with the structure of a left $C_0$-comodule.
 We denote by $C\Comod$ the category of left comodules over $C$ and by $C\op\Comod$ the analogously defined category of right comodules over $C$. 
 
We define the ``enveloping coalgebra'' $C^e = C \otimes C\op$, which is a curved pointed coalgebra with $C^e_0 \coloneqq (C \otimes C\op)_0 = C_0 \otimes C_0$ and curvature $h \otimes \eta + \eta \otimes h: C^e \to k$, where $\eta$ is the counit. $C^e$-comodules are bicomodules over $C$. 

We will always consider comodules of $C^e$ in the coderived category of $C^e$, to be defined below.

\begin{remark}
    Note the following subtlety in the setting of curved algebra: A bicomodule is in general neither a left nor a right comodule. If $C$ is curved there is no obvious left or right comodule structure on $C$. Let $h \in C^*$ denote the curvature and denote its action on $C$ by $*$. Then in $C$ by definition $d^2(c) = \{h,c\} = h*c + c*h$, but for a left comodule the differential satisfies $d^2(c) = h*c$. However, $C$ \emph{is} a $C^e$-comodule, as with the curvature $h\otimes 1 + 1 \otimes h$ of $C^e$ we have $d^2(c) = \{h,c\} = (h\otimes \eta + \eta \otimes h)*c$.
\end{remark}

\subsection{Categories as monoids}\label{sect:cat}
We consider dg categories as monoids in bicomodules. We recall the key constructions here and refer to \cite{holstein2022categorical} for further details.
Given a (small) dg category $\mathcal D$ we consider the cosemisimple coalgebra $D_0 \coloneqq \kob {\mathcal D}$ and then $D \coloneqq \oplus_{d, d' \in \Ob \mathcal D} \uHom_{\mathcal D}(d,d')$ is a monoid in the monoidal category of $D_0$-bicomodules with cotensor product. The comodule structure sends any morphism $f$ to $t(f) \otimes f \otimes s(f) \in D_0 \otimes D \otimes D_0$ where $s, t$ are the source and target function.

Recall the cotensor product 
$$M \boxempty_{D_0} N \coloneqq \operatorname{eq}(M \otimes N \rightrightarrows M \otimes D_0 \otimes N)$$ is the equalizer of the left coaction on $N$ and the right coaction on $M$.
Thus a monoid structure $D \boxempty_{D_0} D \to D$ is a composition only defined for $f \otimes g$ if the source of $f$ is the target of $g$.

Note that if $\mathcal D$ is a dg algebra, i.e.\ a dg category with only one object, then $D = \mathcal D$.

A functor $F: \mathcal D \to \mathcal D'$ is represented by a coalgebra map $f_0: D_0 \to D'_0$ (equivalent to the map $F$ induces on objects) and a $D'_0$-bicomodule map $D \to D'$ compatible with the monoid action.

The tensor product of two monoids $D, D'$ in bicomodules is naturally a monoid $D \otimes D'$ in bicomodules over $D_0\otimes D'_0$, and corresponds naturally to the dg category $\mathcal D \otimes \mathcal D'$.

The category $D\Mod$ of left modules over $D$ consists of $D_0$-comodules with an action by the monoid $D$.
Thus this is naturally equivalent to the category of functors from $\mathcal D$ to chain complexes. 
We define the categories of right modules $D\op\Mod$ and bimodules $D^e\Mod$ similarly. In particular, $D$ is naturally a bimodule over itself (also known as the bimodule sending two objects to the hom space between them), but not canonically a left or right module.

We denote by $\De(D)$ the $\infty$-category obtained by localising $D\Mod$ at object-wise quasi-isomorphisms, and by $\HDe(D)$ its homotopy category.
The hom spaces in the natural dg enhancement are denoted by $\RHom_D(-,-)$.

For further details we refer the reader to \cite[Section 2.1]{holstein2022categorical}.

\subsection{Koszul duality of coalgebras and categories}\label{sect:koszul}
We now recall Koszul duality between curved pointed coalgebras and dg categories.

Given a curved pointed coalgebra $C$ we define the dg category (as a monoid in $C_0$-bicomodules)
$\Omega C = (T_{C_0}\overline{C}[1], d)$
where the differential $d$ is induced by the differential, coproduct and curvature on $C$.

The universal twisting cochain $\tau: C \to \Omega C[-1]$ is defined to be the $C_0$-bicomodule map sending $c$ to $[\bar c]$. This is a Maurer-Cartan element in the (curved) convolution algebra $\uHom(C, \Omega C)$.

Dually for a dg category $D$ which is quasi-free (i.e.\ all hom spaces are free $k$-modules) we define
$\Ba D = (T_{D_0} \overline D[-1], d, h)$
where the differential $d$ and curvature $h$ are induced by differential and composition on $D$.
Here the universal twisting cochain may be represented by the projection map that we denote $\tau: \Ba D \to D[-1]$ by abuse of notation. 

By construction we have $(\Ba D)_0 = D_0$ and $(\Omega C)_0 = C_0$ (where these are the cosemisimple coalgebras of objects rather than degree 0 parts).

We have the following Koszul duality result. This is \cite[Theorem 3.40, Corollary 3.41]{holstein2022categorical}. The equivalent modification $\dgCat'$ of $\dgCat$ is also explained in loc.\ cit.

\begin{theorem}\label{thm:algebrakoszul}
    \begin{enumerate}
        \item Let $k$ be a field. Then the adjunction 
        $$\Omega \colon \ptdco \rightleftarrows \dgCat'\colon B$$
        defines a Quillen equivalence of suitably defined model categories.
        \item Let $k$ be a PID. Then $\Omega \dashv \Ba$ is an adjunction between $k$-free pointed curved coalgebras and quasi-free dg categories that induces an equivalence of homotopy categories
$$\Ho((\ptdco)_{\mathsf{fr}}) \rightleftarrows \Ho(\dgCat'_{\mathsf{qfr}})$$
when we localize $\dgCat$ at quasi-equivalences and $\mathsf{ptdCoa}_*$ at maps $f$ such that $\Omega(f)$ is a quasi-equivalence.
    \end{enumerate}
\end{theorem}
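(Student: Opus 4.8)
The plan is to realise this as a bar--cobar Koszul duality in the style of Lefèvre-Hasegawa and Positselski, adapted to the curved, many-object setting. The first step is to establish the adjunction $\Omega \dashv \Ba$ at the level of the underlying categories. For a curved pointed coalgebra $C$ and a quasi-free dg category $D$ (with $C_0 \cong D_0$ prescribed by the map on objects), I would identify both $\Hom_{\dgCat'}(\Omega C, D)$ and $\Hom_{\ptdco}(C, \Ba D)$ with the set of \emph{twisting cochains}: degree $-1$ maps $\tau \colon \overline{C} \to \overline{D}$ of $C_0$-bicomodules satisfying the Maurer--Cartan equation in the curved convolution algebra $\uHom(C,D)$, in which the curvature $h$ of $C$ supplies the inhomogeneous (degree $-2$) term. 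The universal twisting cochains $\tau \colon C \to \Omega C[-1]$ and $\tau \colon \Ba D \to D[-1]$ recalled in the excerpt are precisely the unit and counit of this correspondence, so naturality and the triangle identities become formal once the Maurer--Cartan bookkeeping over $C_0$ is checked.

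The second step, for part (1), is to produce the two model structures and verify that the adjunction is Quillen. On $\dgCat'$ I would take the Tabuada model structure with quasi-equivalences as weak equivalences. On $\ptdco$ the subtlety is that a curved coalgebra has no cohomology, so quasi-isomorphism is unavailable as a notion of equivalence; instead I define the weak equivalences to be the maps $f$ with $\Omega(f)$ a quasi-equivalence, take the cofibrations to be the (admissible) monomorphisms so that every object is cofibrant, and construct this structure directly following Positselski. Granting its existence, $\Omega$ is left Quillen essentially by definition of the weak equivalences, which is the cleanest way to sidestep the lack of a homology functor on the coalgebra side.

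The third step is to check that the derived unit and counit are weak equivalences. The heart of the matter is the counit $\Omega \Ba D \to D$, which I claim is a quasi-equivalence: I would prove this by the standard acyclicity argument, filtering the monoid $T_{D_0}\overline{\Ba D}[1]$ by word length and exhibiting an explicit contracting homotopy on the associated graded, with the many-object case organised by the cotensor product over $D_0$. Once the counit is a quasi-equivalence for quasi-free $D$, the unit $C \to \Ba \Omega C$ is a weak equivalence automatically: applying $\Omega$ and invoking a triangle identity reduces it to the counit evaluated on $\Omega C$. This yields the Quillen equivalence of part (1).

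For part (2), over a PID the coalgebra-side model structure is not available, so the plan is to argue directly at the level of homotopy categories. I would restrict to $k$-free curved coalgebras and quasi-free dg categories, where all relevant complexes are $k$-free and the convolution and bar--cobar constructions behave exactly as over a field. The counit computation is insensitive to the ground ring once everything is $k$-free, so $\Omega \Ba D \to D$ remains a quasi-equivalence, and the same triangle-identity argument shows the unit is a weak equivalence; after checking that $\Omega$ and $\Ba$ preserve the respective freeness conditions and descend to the localisations, one obtains the adjoint equivalence $\Ho((\ptdco)_{\mathsf{fr}}) \rightleftarrows \Ho(\dgCat'_{\mathsf{qfr}})$. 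I expect the main obstacle throughout to be the construction of the coalgebra-side model structure in part (1), and in part (2) the need to control the homotopy theory \emph{without} a model structure---where the $k$-freeness hypotheses and the explicit bar--cobar homotopies must do all the work that the model structure would otherwise provide.
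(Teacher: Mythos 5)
The paper itself offers no proof of this statement: it is recalled verbatim from \cite[Theorem 3.40, Corollary 3.41]{holstein2022categorical}, with the localization claims of part (2) elaborated only in Remark \ref{rem:htpycats}. So your proposal can only be measured against the strategy of that reference, and at the level of strategy it does match: the adjunction via identification of both Hom-sets with twisting cochains (with the universal twisting cochains inducing unit and counit), a Positselski-style model structure on $\ptdco$ with weak equivalences created by $\Omega$ and all objects cofibrant, Tabuada's quasi-equivalence model structure on $\dgCat'$, the counit $\Omega \Ba D \to D$ handled by a word-length filtration, the unit reduced to the counit by a triangle identity and two-out-of-three, and part (2) obtained by observing that $\Omega$ and $\Ba$ preserve weak equivalences and that the unit and counit are weak equivalences, so the adjunction descends to the localizations --- which is precisely how Remark \ref{rem:htpycats} describes the PID case.

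Two points in your sketch are genuine gaps rather than mere compression. First, you declare the adjunction ``formal once the Maurer--Cartan bookkeeping over $C_0$ is checked,'' but in the curved, many-object setting the morphisms of $\ptdco$ are \emph{not} plain coalgebra maps; the paper pointedly defers their definition to the reference. The issue is that a dg functor $F \colon D \to D'$ need not respect the splittings $D \cong \overline{D} \oplus D_0$ (the composite of two non-identity morphisms can have a component along the identities), so $\Ba F$ exists as a morphism of curved coalgebras only if morphisms in $\ptdco$ carry an additional degree $-1$ ``change of connection'' functional twisting the differentials and curvatures. Without building this into the category, $\Ba$ is not even functorial and the Hom-set identification you assert fails; with it, the identification is essentially the definition, but that definition is the real content of your first step. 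Second, ``$\Omega$ is left Quillen essentially by definition'' is only half true: preservation of weak equivalences is by definition, but preservation of cofibrations --- that $\Omega$ of an admissible monomorphism is a cofibration in Tabuada's model structure --- requires an argument (semi-freeness of the cobar construction and an analysis of pushouts), and the existence of the coalgebra-side model structure, in particular the factorization axioms, is itself the bulk of the technical work of the cited proof. You correctly identify this as the main obstacle, but identifying an obstacle is not the same as overcoming it, so as it stands the proposal is an accurate roadmap rather than a proof.
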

The model structure on $\dgCat'$ may be chosen with quasi-equivalences or Morita equivalences as weak equivalences,  we will be interested in quasi-equivalences. The weak equivalences in $\ptdco$ are then just the preimages of quasi-equivalences.

\begin{remark}\label{rem:htpycats}  
The homotopy categories in part (2) of Theorem \ref{thm:algebrakoszul} are just the category obtained by formally inverting all weak equivalences, i.e.\ quasi-equivalences respectively their pre-images.
In general, such a category need not be locally small, however here  $\Ho(\dgCat'_{\mathsf{qfr}})$ is easily seen to be equivalent to $\Ho(\dgCat)$, which is locally small as the homotopy category of a model category (for any ring $k$). Thus $\Ho((\ptdco)_{\mathsf{fr}})$ is also locally small.

The equivalence may be refined, as relative categories, i.e.\ categories with a distinguished class of weak equivalences, provide a model for $(\infty,1)$-categories. 
In fact, even when working over a PID, the bar and cobar functors preserve weak equivalences and the unit and counit of the adjunction are weak equivalences. The reason for assuming freeness is for these functors to be well behaved with respect to homological algebra constructions (all of our applications are in this context). 
We may turn a relative category into an $\infty$-category (quasi-category) by $\infty$-categorical localization of the 1-category at weak equivalences \cite[Section 7.1]{cisinski2019higher}, or into a simplicial category by performing the  hammock localization at weak equivalences  \cite{barwick2012relative, barwick2012characterization}.
These two constructions are equivalent via the homotopy coherent nerve \cite[Proposition 1.2.1]{hinich2015dwyerkanlocalizationrevisited}. As the bar-cobar adjunction induces a strict homotopy equivalence of relative categories by \cite[Corollary 3.41]{holstein2022categorical} it induces a weak equivalence of $(\infty,1)$-categories by \cite[Proposition 7.5]{barwick2012relative}.
\end{remark}

\subsection{Calabi-Yau structures on algebras and categories}\label{sect:cy}
Calabi-Yau structures on dg algebras and dg categories have been studied in a variety of contexts, including algebraic geometry (induced by Serre duality on Calabi-Yau varieties), conformal field theory \cite{costello2007topological, kontsevich2008notes} and representation theory \cite{keller2008calabi}.

There are two different kinds of Calabi-Yau structures.
Let $A$ be a dg algebra or dg category. We call a  dg category \emph{locally proper} if each hom complex is perfect over $k$. 
One often considers \emph{proper} dg categories which moreover have a compact generator, but we shall not need that assumption.

If we consider $A$ as a monoid in $k[\Ob(A)]$-bicomodules 
we can define the $A^e_0$-linear dual $A^* = \uHom_{A^e_0}(A, A^e_0)$ as the dual $A^e_0$-comodule of $A$.
We may write this explicitly by recalling $A = \oplus_{x,y \in \Ob(A)}\uHom_{A}(x,y)$ and dualizing each hom space.
If $A$ is locally proper then $A^*$ has a natural $A_0$-bicomodule structure as each $\Hom$ is perfect $A_0$-bicomodule, and we have an $A^e$-action sending $\alpha \otimes f \otimes \beta \in A \boxempty_{A_0} A^* \boxempty_{A_0} A$ to the element in $A^*$ sending $\gamma \in A$ to $f(\beta \circ \gamma \circ \alpha)$ (as usual the cotensor structure ensures composability).

\begin{definition}
    A \emph{weak proper $n$-Calabi-Yau structure} on a proper dg category $A$ is a cycle in the dual Hochschild chain complex $\HHcx_*(A) \to k[n]$ which induces a weak equivalence $A\to A^*[n]$ of $A$-bimodules.
    A  \emph{proper $n$-Calabi-Yau structure} is a factorization of such a cycle through the dual cyclic chain complex $\HCcx_*(A) \to k[n]$.
\end{definition}

Unravelling the definitions a weak equivalence $A \to A^*[n]$ of $A^e$-bimodules  says that each hom space in $A$ is quasi-isomorphic to its shifted dual, thus this agrees with the definition in \cite{brav2019relative} (except for  the different sign convention).
This is the structure on the derived category of a Calabi-Yau variety.

Kontsevich and Soibelman \cite{kontsevich2008notes} originally defined an $n$-Calabi-Yau structure an a compact $A_\infty$-algebra (or dg algebra) $A$ as a dual cyclic chain $CC_*(A) \to k$ inducing a non-degenerate pairing $A \otimes A \to k[-n]$.  
This is called a \emph{compact Calabi-Yau structure} in \cite{cohencalabi}.

We now denote the envelopping algebra (category) of $A$ by $A^e = A \otimes A\op$. 
We call $A$ \emph{smooth} if the $A^e$-bimodule $A$ is compact as an object in the derived category $\HDe(A^e)$, i.e. if $\RHom_{A^e}(A,-)$ commutes with arbitrary direct sums. This is equivalent to $\RHom_{A^e}(A,-)$ commuting with filtered colimits or with $A$ being perfect as an $A$-bimodule, i.e. being quasi-isomorphic to a retract of a dg $A$-bimodule $M$ such that there is a finite sequence 
\[0 \simeq M_0 \to M_1 \to \ldots \to M_n=M
\] 
with the cone of each map $M _i \to M_{i+1}$ being quasi-isomorphic to a shift of $A^e$.

Let $A^! = \RHom_{A^e}(A, A^e)$ be the derived $A^e$-dual of $A$.

\begin{definition} A \emph{weak smooth $n$-Calabi-Yau structure} on a smooth dg algebra (or dg category) is  a cycle in $\HHcx_n(A)$ inducing a weak equivalence of $A^e$-bimodules $A^! \simeq A[n]$ via the identification $\HH_n(A) \cong  \Hom_{\De(A^e)}(A^!,A[n])$.

A \emph{smooth $n$-Calabi Yau structure} on $A$ is a cycle in the negative cyclic complex $\HNcx_n(A)$ whose image in $\HHcx_n(A)$ is a weak smooth $n$-Calabi-Yau structure.
\end{definition}
We refer to \cite{brav2019relative} for further details. In this reference, proper $n$-Calabi-Yau structures are called \emph{right} $n$-Calabi-Yau structures and smooth $n$-Calabi-Yau structures are called \emph{left} $n$-Calabi-Yau structures. Also note that sometimes what we call a weak $n$-Calabi-Yau structure is called a $n$-Calabi-Yau structure, and a lift to negative cyclic homology is called an \emph{almost exact} $n$-Calabi-Yau structure \cite{herscovich2023cyclic}.

\section{Coalgebras and Calabi-Yau structures}
\subsection{Comodules}\label{sect:comodules}
Next we consider comodules over a curved pointed coalgebra $C$.
We want to consider comodule maps up to homotopy.
This can be expressed in two equivalent ways: 
By considering homotopy comodule maps $M \to \Omega C \boxempty_{C_0} N $, see Definition \ref{def:htpycomodule}, or by localising the category of comodules at a certain class of weak equivalences, see Definition \ref{def:coderived}.

From \cite[Proposition 3.42]{holstein2022categorical} we have the following statement (noting the proof does not depend on $k$ being a field).
\begin{proposition}\label{prop:moduleadjunction}
	\begin{enumerate}
		\item 
	Let $D$ be a dg category viewed as a monoid over $D_0 = k[\Ob(D)]$. There is an adjunction
	\[
	F_D = D\boxempty_{D_0}^\tau -:\Ba(D)\Comod\rightleftarrows A\Mod:\Ba D\boxempty^\tau_{D_0}- = G_D.
	\]
	\item Let $C$ be a curved pointed  coalgebra.
	Then there is an adjunction
	\[
	F_C = \Omega C\boxempty_{C_0}^\tau-:C\Comod\rightleftarrows \Omega(C)\Mod:C\boxempty_{C_0}^\tau- = G_C.
	\]
	\end{enumerate}
\end{proposition}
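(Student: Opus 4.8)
Both parts are special cases of one construction attached to a twisting cochain, so the plan is to prove the following general statement and then specialize. Given a curved pointed coalgebra $C$, a dg category $A$ regarded as a monoid in $C_0$-bicomodules, and a twisting cochain $\tau\colon C\to A$ (a Maurer--Cartan element of the curved convolution algebra $\uHom(C,A)$ that is moreover a $C_0$-bicomodule map), the twisted functors $A\boxempty^{\tau}_{C_0}-\colon C\Comod\to A\Mod$ and $C\boxempty^{\tau}_{C_0}-\colon A\Mod\to C\Comod$ form an adjunction. Part (2) is the case $A=\Omega C$ with $\tau$ the universal twisting cochain, and part (1) is the case $C=\Ba D$, $A=D$ with $\tau$ the projection; both are Maurer--Cartan by Section~\ref{sect:koszul}. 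I would establish this in two steps: first that the functors are well defined, then the adjunction isomorphism.

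For well-definedness I would fix a left $C$-comodule $M$ with coaction $\rho_M(m)=m_{(-1)}\otimes m_{(0)}$ and equip $A\boxempty_{C_0}M$ with the sum of $d_A\boxempty 1+1\boxempty d_M$ and the twist $a\boxempty m\mapsto\pm\,(a\,\tau(m_{(-1)}))\boxempty m_{(0)}$; since $\tau$ is $C_0$-bilinear this descends to the cotensor product, and the left $A$-action lives on the first factor. The essential point is that the result is an \emph{honest} dg module, i.e.\ $d^2=0$ despite $M$ being curved: expanding $d^2$ collects exactly $d\tau+\tau\star\tau$ applied through the comodule structure together with the curvature contribution from $d_M^2=h\cdot(-)$, and these cancel by the Maurer--Cartan equation for $\tau$. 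The cofree object $C\boxempty^{\tau}_{C_0}N$ attached to a left $A$-module $N$ is handled symmetrically, with the twist $c\boxempty n\mapsto\pm\,c_{(1)}\boxempty\tau(c_{(2)})\cdot n$; here the same expansion leaves the curvature term uncancelled, which is exactly what makes $C\boxempty^{\tau}_{C_0}N$ a genuine \emph{curved} $C$-comodule.

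For the adjunction I would first note that on underlying graded objects $A\boxempty_{C_0}M$ is the free $A$-module on the graded $C_0$-comodule $M$ and $C\boxempty_{C_0}N$ is the cofree $C$-comodule on $N$. Consequently, forgetting differentials, both $\Hom_{A\Mod}(A\boxempty^{\tau}_{C_0}M,N)$ and $\Hom_{C\Comod}(M,C\boxempty^{\tau}_{C_0}N)$ are naturally identified with the space of degree-zero graded $C_0$-comodule maps $f\colon M\to N$, via the free--forgetful and cofree--forgetful adjunctions respectively. It then remains to check that this bijection carries differential-compatible maps to differential-compatible maps. Writing $\tilde f(a\boxempty m)=a\cdot f(m)$ for the $A$-linear extension and $\hat f(m)=m_{(-1)}\boxempty f(m_{(0)})$ for the $C$-colinear coextension, a direct computation shows that $\tilde f$ is a chain map precisely when
\[
f(d_M m)-d_N f(m)+\tau(m_{(-1)})\cdot f(m_{(0)})=0\quad\text{for all }m,
\]
and, after one use of coassociativity and the comodule axiom for $M$, this same identity is precisely the condition for $\hat f$ to be a map of curved $C$-comodules. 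Naturality in both variables is immediate from the formulas, yielding the adjunction.

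The main obstacle is this final equivalence: it is the one place where the twist interacts simultaneously with the $A$-action and with the $C$-coaction, and making the single displayed identity govern chain-map compatibility on both sides requires tracking the Koszul signs in the two twisted differentials with enough care that they agree. By contrast the curvature is confined to the well-definedness step and never enters the adjunction isomorphism, since that computation is first order in the differential. Part (1) then follows verbatim with the honest and curved roles interchanged, the honest monoid now being $D$ and the curved coalgebra $\Ba D$, and with the identical Maurer--Cartan and sign bookkeeping.
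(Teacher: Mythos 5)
Your proposal is correct, but note that the paper does not actually prove this proposition: it simply cites \cite[Proposition 3.42]{holstein2022categorical}, remarking only that the proof there does not depend on $k$ being a field. Your argument is the standard twisting-cochain adjunction proof, and it is essentially the argument behind the cited result: reduce both hom-sets to degree-zero graded $C_0$-comodule maps $f\colon M\to N$ via the free--forgetful and cofree--forgetful adjunctions, then check that the chain-map condition for the $A$-linear extension $\tilde f$ and for the $C$-colinear coextension $\hat f$ is the same twisted identity
$f d_M - d_N f + \mu(\tau\otimes f)\rho_M = 0$,
using comodule coassociativity on one side; the curvature enters only in the well-definedness step, where the curved Maurer--Cartan equation for $\tau$ cancels the $d_M^2 = h\ast(-)$ term on the module side and produces it on the comodule side. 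Your computation of both conditions is right, and the isolation of curvature from the adjunction isomorphism is exactly the correct structural point. One virtue of writing the proof this way is that it makes the paper's parenthetical remark transparent: since $C_0$-comodules are just $\Ob$-graded $k$-modules, the cotensor products and the free/cofree adjunctions involve no flatness or field hypotheses, so the argument visibly works over any PID (given the standing freeness assumption on $C$). Two cosmetic points: the statement of part (1) contains a typo ($A\Mod$ should read $D\Mod$), which you silently and correctly repair; and your phrase ``with the honest and curved roles interchanged'' is misleading, since part (1) is not dual to part (2) but literally another instance of your general statement, with $(C,A,\tau) = (\Ba D, D, \mathrm{projection})$.
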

Analogous adjunctions hold for right modules and comodules.

We will now recall the definition of the functors in the above adjunctions.  We will frequently use the cotensor product over the coalgebra of objects, see  Section \ref{sect:cat} for the definition.
These are objects of the form $D \boxempty_{D_0} N $ etc.\ generalising the usual tensor product in the 1-object case to the situation of dg categories and pointed coalgebras. 

The functor $C\boxempty^\tau_{C_0} -$ is moreover \emph{twisted}, cf.\ \cite{brown1959twisted} by the universal twisting cochain $\tau$ (indicated by the superscript). More precisely,
$C\boxempty^\tau_{C_0} -$
is defined as sending the left $\Omega C$-module $M$ to $C\boxempty_{C_0} M$ with twisted differential 
\[d: c\otimes m \mapsto dc \otimes m +(-)^{|c|} c \otimes dm + d_\tau(c\otimes m)\] where the last 
summand is defined as the composition
$$
d_{\tau} \colon C\boxempty_{C_0} M 
\xrightarrow{\Delta \otimes \id}
C \boxempty_{C_0} C\boxempty_{C_0} M
\xrightarrow{\id \otimes \tau \otimes \id}
C \boxempty_{C_0} \Omega C\boxempty_{C_0} M
\xrightarrow{\id \otimes \lambda}
C\boxempty_{C_0} M
$$
where $\tau: C \to \Omega C$ is the universal twisting cochain (or Maurer-Cartan element) and $\lambda$ denotes the left $\Omega C$-action on $M$. (The same construction may be performed with other twisting cochains.)
The other three functors are defined similarly.

We will sometimes drop the subscript and write $F \dashv G$ when the coalgebra or category we are working over is clear.
Note that $D_0 = C_0$ here, so we may change the subscript of the cotensor product for convenience.

The reader concerned with dg algebras rather than categories may restrict attention to those and replace $D\boxempty_{D_0}^\tau -$ by $D\otimes^\tau -$ etc.\ throughout this paper.

\begin{definition}\label{def:coderived}
  A map $f$ of left $C$-comodules is a \textit{weak equivalence} if $F_C(f)$ is a quasi-isomorphism of left dg $\Omega C$-modules. We call the $\infty$-category of left $C$-comodules localized at weak equivalences the \emph{coderived category} and denote it by $\Dco(C)$.
  Its homotopy category is denoted by $\HDco(C)$. 
\end{definition}
We will see below that our definition is compatible with the usual definition if $k$ is a field. We need not worry about set-theoretic issues as the localization $\Dco(C)$ is equivalent to the familiar derived category $\De(\Omega C)$ because of the following module-comodule Koszul duality.

\begin{theorem}	\label{thm:modulekoszul}
\begin{enumerate}
		\item 	Let $D$ be a dg category. Then
	\[	F_D = D\boxempty_{D_0}^\tau -:\Ba D\Comod\rightleftarrows D\Mod:\Ba D\boxempty^\tau_{D_0}- = G_D	\]
	induces a weak equivalence of $\infty$-categories
 $\Dco(\Ba D) \simeq \De(D)$.
 If $k$ is a field this comes from a Quillen equivalence of suitable model structures.
	\item Let $C$ be a curved pointed  coalgebra. Then
	\[	F_C = \Omega C\boxempty_{C_0}^\tau-:C\Comod\rightleftarrows \Omega C\Mod:C\boxempty_{C_0}^\tau- = G_C	\]
 induces a weak equivalence of $\infty$-categories  $\Dco(C) \simeq \De(\Omega C)$.
If $k$	is a field this comes from a Quillen equivalence for suitable model structures.
	\end{enumerate}
\end{theorem}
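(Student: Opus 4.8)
The plan is to show that the adjunction $F_C \dashv G_C$ of Proposition \ref{prop:moduleadjunction} descends to an adjoint equivalence of the localized $\infty$-categories, and that the entire statement reduces to a single acyclicity property of the universal twisting cochain. I will treat part (2) in detail; part (1) is the bar-side mirror image and also follows formally from (2), as explained at the end.

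First, the descent. By Definition \ref{def:coderived} a map $f$ of $C$-comodules is a weak equivalence precisely when $F_C(f)$ is a quasi-isomorphism, so $F_C$ sends weak equivalences to weak equivalences tautologically and hence induces a functor $\bar F_C \colon \Dco(C) \to \De(\Omega C)$ on the localizations (no fibrant/cofibrant replacement is needed at the $\infty$-level, since a weak-equivalence-preserving functor descends directly). To see that $G_C$ also descends, one checks that $G_C$ carries a quasi-isomorphism $g$ of $\Omega C$-modules to a weak equivalence of comodules, i.e.\ that $F_C G_C(g)$ is a quasi-isomorphism; granting the counit property below, this is immediate from naturality of the counit. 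Thus $G_C$ induces $\bar G_C \colon \De(\Omega C) \to \Dco(C)$ and $\bar F_C \dashv \bar G_C$. It then suffices to prove that the unit $\eta_M \colon M \to G_C F_C(M)$ and the counit $\epsilon_N \colon F_C G_C(N) \to N$ are weak equivalences for all $M$ and $N$, as an adjunction of $\infty$-categories with invertible unit and counit is an adjoint equivalence.

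The heart of the argument is the counit. For every left $\Omega C$-module $N$ the counit is the canonical map
\[ \epsilon_N \colon \Omega C \boxempty^\tau_{C_0} C \boxempty^\tau_{C_0} N \longrightarrow N, \]
and I claim it is a quasi-isomorphism. This reduces to the \emph{acyclicity of the universal twisting cochain} $\tau \colon C \to \Omega C$, i.e.\ to the statement that the two-sided twisted object $\Omega C \boxempty^\tau_{C_0} C \boxempty^\tau_{C_0} \Omega C \to \Omega C$ is a quasi-isomorphism of $\Omega C$-bimodules; the case of general $N$ follows from the case of free modules, since both $F_C G_C$ and the identity preserve the relevant colimits and weak equivalences. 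To prove the acyclicity I would filter the relevant twisted complex by a suitable filtration (by coradical degree on $C$, equivalently by tensor word-length on $\Omega C = T_{C_0}\overline{C}[1]$), chosen so that the twisting term $d_\tau$ strictly raises the filtration. On the associated graded only the internal and (co)bar differentials survive, and the resulting untwisted complex carries an explicit contracting homotopy built from the pointed splitting $C_0 \hookrightarrow C$ (the ``extra degeneracy''); the local conilpotency of $C$ and the vanishing of the curvature on $C_0$ are exactly what make this homotopy well defined. An exhaustiveness/convergence argument then transfers acyclicity from the associated graded to the total complex.

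Granting the counit property, the unit is formal: the triangle identity $\epsilon_{F_C M} \circ F_C(\eta_M) = \mathrm{id}_{F_C M}$, together with the fact that $\epsilon_{F_C M}$ is a quasi-isomorphism, forces $F_C(\eta_M)$ to be a quasi-isomorphism, which is precisely the assertion that $\eta_M$ is a weak equivalence of comodules. Hence $\bar F_C$ and $\bar G_C$ are mutually inverse equivalences of $\infty$-categories. When $k$ is a field I would upgrade this to a Quillen equivalence by equipping $C\Comod$ with the coderived model structure (whose weak equivalences are exactly the $F_C$-equivalences) and $\Omega C\Mod$ with the projective model structure; $F_C \dashv G_C$ is then a Quillen adjunction, and the unit and counit properties above say precisely that the derived unit and counit are weak equivalences. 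Finally, part (1) is the mirror statement for the universal twisting cochain $\tau \colon \Ba D \to D$, proved by the identical filtration argument with word-length on $\Ba D = T_{D_0}\overline D[-1]$; alternatively it follows from part (2) applied to $C = \Ba D$ together with the Koszul-duality equivalence $\Omega \Ba D \simeq D$ of Theorem \ref{thm:algebrakoszul}, which yields $\De(\Omega \Ba D) \simeq \De(D)$. I expect the acyclicity of the universal twisting cochain in the many-object, curved setting over a PID to be the main obstacle: the contracting homotopy and the filtration convergence must be carried out over $C_0$-bicomodules rather than over $k$, and the freeness hypothesis on $\overline C$ is exactly what is needed for the associated-graded computation and for the convergence to hold.
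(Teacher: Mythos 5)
Your overall skeleton coincides with the paper's: $F_C$ descends tautologically, everything reduces to the counit $F_CG_CM \to M$ being a quasi-isomorphism for every $\Omega C$-module $M$, the unit then follows from the triangle identity and two-out-of-three, the relative-category adjunction with invertible unit and counit gives the equivalence of $\infty$-categories, the field case is handled by the (co)derived model structures, and part (1) is deduced from part (2) via $\De(D)\simeq \De(\Omega \Ba D) \simeq \Dco(\Ba D)$. The gap is in the one step carrying all the content: your filtration argument for the acyclicity of the cocone $K$ of $\Omega C\boxempty^\tau_{C_0} C\boxempty^\tau_{C_0} M \to M$ is set up backwards. You choose the filtration so that the twist $d_\tau$ strictly moves filtration degree, and then claim that the resulting \emph{untwisted} associated graded admits an extra-degeneracy contraction. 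It does not: using the splitting $C \cong \overline C \oplus C_0$, the associated graded decomposes as a direct sum of $\Omega C\boxempty_{C_0}\overline C\boxempty_{C_0} M$ (with purely internal differential) and the cocone of the action map $\Omega C\boxempty_{C_0} M \to M$, and neither summand is acyclic in general --- the first already has nonvanishing homology when $C$ has zero differential and curvature, and the second fails for essentially every non-free $M$ (and even for free ones). The acyclicity of $K$ is created precisely by the twist-induced maps \emph{connecting} these two pieces, so any filtration argument must keep them alive in the associated graded. This is exactly what the paper does: it uses the finite three-step filtration $K \supset \Omega C\boxempty^\tau_{C_0} C\boxempty^\tau_{C_0} M \supset \Omega C\boxempty^\tau_{C_0}\overline C\boxempty^\tau_{C_0} M \supset 0$, whose connecting maps $\delta_1,\delta_0$ assemble into the sequence $0 \to \Omega C\boxempty_{C_0}\overline C\boxempty_{C_0} M \to \Omega C\boxempty_{C_0} M \to M \to 0$, i.e.\ the standard two-term resolution of $M$ over the tensor algebra $\Omega C = T_{C_0}\overline C[1]$, which is exact as a sequence of graded modules; $K$ is then an extension of two contractible complexes.

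Two further problems in your sketch are also avoided by that finite filtration. First, ``coradical degree on $C$, equivalently tensor word-length on $\Omega C$'' is not available in the curved setting: the cobar differential has a curvature component $[c]\mapsto h(c)\cdot 1$ that lowers word-length while the coproduct component raises it, so the word-length filtration on $\Omega C$ is not compatible with the differential at all (the coradical filtration on the middle factor can be salvaged, but it is a different filtration). Second, your convergence step needs the coradical filtration of $C$ to be exhaustive, i.e.\ ``local conilpotency.'' That is automatic over a field, but the theorem is asserted for arbitrary $k$-free pointed curved coalgebras over a PID, where this is neither assumed by the paper nor automatic; freeness of $\overline C$ does not supply it. The paper's argument uses no conilpotency whatsoever, only the splitting and the quasi-freeness of $\Omega C$. (Your preliminary reduction from general $N$ to free modules is likewise both under-justified and unnecessary: the finite filtration applies to arbitrary $M$ directly.)
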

Here the model structure on modules is the usual projective model structure where weak equivalences are object-wise quasi-isomorphisms and all objects are fibrant. 
The model category on comodules has weak equivalences as above and all objects are cofibrant.
\begin{proof}
Over a field the necessary model structures are defined in \cite{holstein2022categorical} and this result is \cite[Theorem 3.43]{holstein2022categorical}.

To relate to our definitions we need to observe that our weak equivalences are exactly maps with coacyclic cones, which are the weak equivalences in the model structure on $C\Comod$. 
This  follows as $F$ preserves and reflects weak equivalences which is part of the proof of \cite[Theorem 3.43]{holstein2022categorical}, based on \cite[Theorem 6.3, 6.4]{positselski2011two}. 

  We now prove the equivalence (2) over a general PID.
   We consider the adjunction $F \dashv G$ on the level of relative categories, first observing that $F$ preserves weak equivalences by definition. We claim that for any dg module $M$ over  $\Omega C$ we have a weak equivalence $FGM \to M$. It then follows that $G$ also preserves weak equivalences and that $N \to GFN$ is a weak equivalence for any comodule $N$ by the triangle identity for counits and the 2-out-of-3 property. These ingredients give us an adjunction between relative categories such that the unit and counit maps are weak equivalences. This induces a weak equivalence of $\infty$-categories, see the discussion in Remark \ref{rem:htpycats}.

   We now prove hat $FGM \to M$ is a weak equivalence.
    It is a fairly standard computation that the cocone $K$ of $\Omega C\boxempty^\tau_{C_0} C \boxempty^\tau_{C_0} M \to M$ is acyclic, where the counit is the map $a \otimes c \otimes m \mapsto a\tau(c)m$.
    We follow \cite[Theorem 6.4]{positselski2011two} and repeat the argument for convenience and to convince the reader that it is valid in the multi-object setting over a general ring. 
    We define the decreasing filtration $F_\bullet$ on $K$ 
    by
    $F_{-1} = K$, $F_0 = \Omega C\boxempty^\tau_{C_0} C \boxempty^\tau_{C_0} M$,
    $F_1 = \Omega C\boxempty^\tau_{C_0} 
    \overline{C} \boxempty^\tau_{C_0} M$ and $F_{2} = 0$.
    Then the differential decreases filtration degree by at most 1 and thus induces maps $\delta_1: \Gr^F_1 \to \Gr^F_0$ and $\delta_0: \Gr^F_0 \to \Gr^F_{-1}$.
    Together these give the diagram
    $$0 \to \Omega C\boxempty^\tau_{C_0} 
    \overline{C} \boxempty^\tau_{C_0} M 
\xrightarrow{\delta_1}
\Omega C\boxempty^\tau_{C_0} 
    C_0 \boxempty^\tau_{C_0} M 
    \xrightarrow{\delta_0}
    M[-1] \to 0
    $$
    which is actually a short exact sequence of graded modules: 
    It is the standard resolution of the $\Omega C$-module $M$.
    The fact that we are working with $C_0$-bicomodules does not affect this.
    Indeed, one computes that $\delta_1$ is in fact induced by $a \otimes c \otimes m \mapsto a\tau(c)\otimes s_1 \otimes m - a \otimes s_0 \otimes \tau(c)m$, where $c \mapsto c \otimes s_0$ and $c \mapsto s_1 \otimes c$ are the maps induced by the $C_0$-bicomodule structure of $C$.
    Then the image of $\delta_1$ is exactly the kernel of the map $\delta_0$ induced by the counit map.
    With $\delta_1$ injective we can consider the contractible submodule $L = \cone(F_1 \xrightarrow{\delta_1} \im(\delta_1))$ of $K$, and $K/L$ is also contractible as $\delta_1$ is an isomorphism from $\Gr^F_0/\im(\delta_0)$ to $M[-1]$.
    Together this shows $K$ is an extension of two contractible objects and acyclic.

    Note that $F$ and $G$ preserve weak equivalences, thus they descend to functors on the (co)derived category.
    
   Now (1) follows from (2) and Theorem \ref{thm:algebrakoszul} as $\De(D) \simeq \De(\Omega \Ba D) \simeq \Dco(\Ba D)$.
\end{proof}
The derived and coderived category are linear stable $\infty$-categories and we will consider explicit models for the enrichment in chain complexes of this adjunction and equivalence in Corollary \ref{cor:koszulemoduleenriched}.

To do this we consider an explicit definition of homotopy comodule maps that is analogous to the definition of maps between $A_\infty$-modules. For a left $C$-comodule $M$ denote by  $\la_M: M \to C \otimes M$ the coaction map and by $\mu$ the multiplication on $\Omega C$.
\begin{definition}\label{def:htpycomodule}
    The space of \emph{homotopy comodule maps} between left $C$-comodules $M$ and $N$ is 
    $\uHom^\tau_k(M, \Omega C\boxempty_{C_0}^\tau  N)$ where the superscripts denote that the differential $d^\pi$ is twisted by the coactions on $M$ and $N$, i.e.\ $d^\pi(f)$ is given by
$$
d_M f +(-1)^{|f|} f d_N 
+ (\mu \otimes \id)\circ (\id \otimes \xi \otimes \id) \circ (\id \otimes \la_N) \circ f 
\pm (\mu \otimes \id) \circ (\id \otimes f)\circ (\xi \otimes \id) \circ \la_M$$
with Koszul sign rules.
\end{definition}
This is compatible with Definition \ref{def:coderived} at least for $k$-free modules because of the following corollary:
\begin{corollary}\label{cor:dcohoms}
    Let $M, N$ be $C$-comodules such that $M$ is $k$-free. Then we may compute the hom space in $\HDco(C)$ as $H_0$ of the complex $\uHom^\tau_k(M,\Omega C \boxempty_{C_0}^\tau N)$ from Definition \ref{def:htpycomodule}.
\end{corollary}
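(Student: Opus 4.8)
The plan is to carry the computation across the Koszul-duality equivalence of Theorem~\ref{thm:modulekoszul}(2) and then to recognise the resulting complex of $\Omega C$-module maps as the twisted complex of Definition~\ref{def:htpycomodule}. Since $F_C$ induces an equivalence of $\infty$-categories $\Dco(C) \simeq \De(\Omega C)$, it is in particular fully faithful on homotopy categories, so for any $C$-comodules $M$ and $N$ there is a natural isomorphism
\[
\Hom_{\HDco(C)}(M,N) \;\cong\; \Hom_{\HDe(\Omega C)}(F_C M, F_C N), \qquad F_C(-) = \Omega C \boxempty_{C_0}^\tau -.
\]
It therefore suffices to express the right-hand side as $H_0$ of a hom-complex of $\Omega C$-modules and to identify that complex with $\uHom^\tau_k(M, \Omega C \boxempty_{C_0}^\tau N)$.

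First I would verify that the hypothesis that $M$ is $k$-free makes $F_C M = \Omega C \boxempty_{C_0}^\tau M$ a semifree (homotopically projective) left $\Omega C$-module. Its underlying graded module is $\Omega C \boxempty_{C_0} M$, the free $\Omega C$-module induced from the $C_0$-comodule $M$; since $M$ is $k$-free this is a genuine free graded $\Omega C$-module. The twisting term $d_\tau$ is built from $\tau$, which kills $C_0$ and lands in $\overline{C}$, so it strictly lowers the coradical filtration of the comodule $M$. With respect to the induced exhaustive filtration on $F_C M$, the associated graded therefore has differential coming only from $\Omega C$ and $d_M$ and is free over $\Omega C$ on the ($k$-free) subquotients of $M$. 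This exhibits $F_C M$ as an iterated extension of shifts of $\Omega C$, hence semifree, and this is precisely where $k$-freeness is used. It follows that the mapping complex out of $F_C M$ computes the derived hom, so
\[
\Hom_{\HDe(\Omega C)}(F_C M, F_C N) \;=\; H_0\,\uHom_{\Omega C}(F_C M, F_C N).
\]
Over a field this is the assertion that $F_C M$ is cofibrant and every $\Omega C$-module fibrant in the projective model structure; over a general PID it is the corresponding statement on the level of relative categories, cf.\ Remark~\ref{rem:htpycats}.

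It remains to produce a natural isomorphism of complexes
\[
\uHom_{\Omega C}(F_C M, F_C N) \;\cong\; \uHom^\tau_k\bigl(M, \Omega C \boxempty_{C_0}^\tau N\bigr).
\]
On underlying graded modules this is the extension-of-scalars adjunction for the $C_0$-algebra $\Omega C$: an $\Omega C$-linear map out of the induced module $\Omega C \boxempty_{C_0} M$ is determined by its restriction to the generators $M$, yielding $\uHom_{\Omega C}(\Omega C \boxempty_{C_0} M, F_C N) \cong \uHom_k(M, \Omega C \boxempty_{C_0} N)$ as graded objects, with the ambient $C_0$-structures absorbed by the cotensor and the object-decomposition. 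The content of the identification is then that the differential on $\Omega C$-module maps induced by the twisted differentials on $F_C M$ and $F_C N$ agrees with the twisted differential $d^\pi$ of Definition~\ref{def:htpycomodule}: the two terms involving the coactions $\la_M$ and $\la_N$ arise exactly from transporting the twist $d_\tau$ on source and target through the adjunction, with the multiplication $\mu$ on $\Omega C$ entering precisely as in the formula for $d^\pi$. Combining the three displays gives the claim.

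The main obstacle is the semifreeness step: over a general PID one cannot invoke a model structure on comodules, so one must check directly that the $k$-freeness of $M$ makes $F_C M$ homotopically projective, ensuring that the naive hom-complex computes the derived mapping space. The remaining work, namely matching the twist $d^\pi$ with the differential inherited through the adjunction, is a sign-bookkeeping computation rather than a conceptual difficulty.
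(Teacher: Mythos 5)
Your proof is correct and follows essentially the same route as the paper: transport the hom space across the Koszul duality equivalence of Theorem~\ref{thm:modulekoszul}, use that $k$-freeness of $M$ makes $F_C M = \Omega C \boxempty_{C_0}^\tau M$ cofibrant in the projective model structure on $\Omega C$-modules (which exists over any PID, so no relative-category workaround is needed for that step), and then identify $\uHom_{\Omega C}(F_C M, F_C N)$ with $\uHom^\tau_k(M, \Omega C \boxempty_{C_0}^\tau N)$ via the free-forgetful adjunction and a check of the twisted differential. Your filtration argument for semifreeness merely fills in a step the paper asserts without proof, so the two arguments are the same in substance.
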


\begin{proof}
    From Theorem \ref{thm:modulekoszul} we have $$\Hom_{\HDco(C)}(M,N) \cong \Hom_{\HDe(\Omega C)}(\Omega C \boxempty_{C_0}^\tau M, \Omega C \boxempty_{C_0}^\tau N). $$ 
    But $\HDe(\Omega C)$ is the homotopy category of a model structure on $\Omega C$-modules if $M$ is $k$-free $\Omega C \boxempty_{C_0}^\tau M$ is cofibrant.
    Thus we obtain 
    \begin{align*}
        \Hom_{\HDe(\Omega C)}(\Omega C \boxempty_{C_0}^\tau M, \Omega C \boxempty_{C_0}^\tau N) 
        &\cong H_0(\uHom_{\Omega C}(\Omega C \boxempty_{C_0}^\tau M, \Omega C \boxempty_{C_0}^\tau N)) \\
        &\cong H_0(\uHom^\tau_k(M,  \Omega C \boxempty_{C_0}^\tau N))       
    \end{align*}
    where the superscript $\tau$ of $\Hom$ stands for the twisting induced by the twisting of the cotensor product.
    The claim follows by unravelling the differential.
\end{proof}
    This enrichment provides a (partial) dg enhancement of the adjunction and equivalence of homotopy categories from Theorem \ref{thm:modulekoszul} even if $k$ is not a field.
 \begin{corollary}\label{cor:koszulemoduleenriched}
Defining $\RHom_{C}(N, N')$ as $\uHom^\tau_k(N,  \Omega C \boxempty_{C_0}^\tau N')$ we have 
natural quasi-isomorphisms
$$\RHom_{\Omega C}(F N, M) \simeq \RHom_{C}(N, GM),$$
$$LF: \RHom_C(N, N') \simeq \RHom_{\Omega C}(FN, FN'),$$
and
$$RG: \RHom_{\Omega C}(M, M') \simeq \RHom_{C}(GM, GM')$$
for $N, N \in C\Comod$ and $M, M'\in \Omega C\Mod$ with $M, N$ assumed $k$-free. 
 \end{corollary}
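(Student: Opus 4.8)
The plan is to deduce all three quasi-isomorphisms from a single isomorphism of complexes --- a twisted hom-tensor adjunction --- together with the counit weak equivalences $FGM \xrightarrow{\simeq} M$ already established in the proof of Theorem \ref{thm:modulekoszul}. The central observation is that for $N$ a $k$-free $C$-comodule the underlying graded adjunction $\Hom_{\Omega C}(\Omega C \boxempty_{C_0} N, P) \cong \Hom_k(N,P)$ upgrades, once both differentials are twisted by $\tau$, to a natural isomorphism of chain complexes
\[ \uHom_{\Omega C}(FN, P) \;\cong\; \uHom^\tau_k(N, P) \]
for every $\Omega C$-module $P$. The identification of the twisted module differential on the left with the differential $d^\pi$ of Definition \ref{def:htpycomodule} on the right is exactly the computation carried out at the end of the proof of Corollary \ref{cor:dcohoms}; here I simply record that it holds in every degree, not only on $H_0$.

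First I would dispatch the middle statement. Since $N$ is $k$-free, $FN = \Omega C \boxempty^\tau_{C_0} N$ is cofibrant and every $\Omega C$-module is fibrant, so $\uHom_{\Omega C}(FN, FN') \simeq \RHom_{\Omega C}(FN, FN')$. Combining this with the adjunction isomorphism above applied to $P = FN'$ gives
\[ \RHom_C(N,N') = \uHom^\tau_k(N, FN') \cong \uHom_{\Omega C}(FN, FN') \simeq \RHom_{\Omega C}(FN, FN'), \]
and unwinding the identifications shows the composite is the map induced by $F$, i.e.\ the claimed quasi-isomorphism $LF$.

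The remaining two statements then follow formally from the homotopy invariance of $\RHom_{\Omega C}$ in each variable. For the first quasi-isomorphism, apply $LF$ with second argument $GM$ --- which is $k$-free, being an equalizer inside the free $k$-module $C \otimes M$ over the PID $k$ --- to obtain $\RHom_C(N, GM) \simeq \RHom_{\Omega C}(FN, FGM)$, and then use the counit weak equivalence $FGM \xrightarrow{\simeq} M$ of Theorem \ref{thm:modulekoszul} to identify $\RHom_{\Omega C}(FN, FGM) \simeq \RHom_{\Omega C}(FN, M)$. For $RG$, apply $LF$ with arguments $GM, GM'$ and then the two counit weak equivalences $FGM \xrightarrow{\simeq} M$ and $FGM' \xrightarrow{\simeq} M'$ to identify $\RHom_{\Omega C}(FGM, FGM') \simeq \RHom_{\Omega C}(M, M')$; tracing through the identifications shows the composite is the map induced by $G$. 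Naturality in all arguments is clear from the construction.

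The main obstacle is the differential-matching underlying the twisted adjunction: under the graded isomorphism one must check that the internal differential of $\Omega C$, the $\Omega C$-action on $P$, and the twist $d_\tau$ built into $F$ assemble on the right-hand side into precisely the four terms --- including the two $\tau$-twisted coaction terms --- of $d^\pi$ from Definition \ref{def:htpycomodule}. This is the same bookkeeping as at the end of the proof of Corollary \ref{cor:dcohoms}, but now verified on the entire complex rather than only in homological degree zero. The only further points to confirm are the $k$-freeness of $GM$, so that $LF$ applies with $GM$ as an argument, and the cofibrancy of $FN$ for $N$ $k$-free, so that the model-categorical inputs remain valid over a general PID.
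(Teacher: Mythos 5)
Your proposal is correct and follows essentially the same route as the paper: the $LF$ statement is proved first via the twisted free--forgetful adjunction together with cofibrancy of $FN$ for $N$ $k$-free (exactly the argument of Corollary \ref{cor:dcohoms}, applied to the whole complex rather than just $H_0$), and the other two quasi-isomorphisms are then deduced by substituting $GM$, $GM'$ and invoking invariance of $\RHom_{\Omega C}$ under the counit weak equivalences $FGM \xrightarrow{\simeq} M$ from Theorem \ref{thm:modulekoszul}. Your extra verification that $GM$ is $k$-free (as a submodule of the free module $C \otimes M$ over a PID), which is needed so that $LF$ may be applied with first argument $GM$ in the $RG$ statement, is a detail the paper leaves implicit, and your justification of it is correct.
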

 This shows one can compute a dg enhancement of   $\Dco(C)$ by $\RHom_{C}$ if the domain is $k$-free, which explains the somewhat abusive name.
 
 \begin{proof}
     The second statement follows just as in the proof of Corollary \ref{cor:dcohoms}. For the first statement we then use $M' = FN'$ together with invariance of $\RHom_{\Omega C}$ under weak equivalences. Similarly, the final statement follows by taking $M = FN$ and $M' = FN'$.
 \end{proof}

\subsection{Proper coalgebras}\label{sect:coproper}
In analogy with the case of algebras and categories, Calabi-Yau structures on coalgebras should be cycles in negative cyclic homology
identifying a coalgebra with a suitable dual under the appropriate notion of weak equivalence. As the dual of a comodule is not necessarily a comodule, we first define a suitable class of coalgebras. These are called proper coalgebras and we define and analyze them in this section. We start with the following key computation.
\begin{lemma}\label{lem:aec} 
There are natural weak equivalences   $F_{C^e} C = (\Omega C)^e \boxempty_{C_0^e}^{\tau^e} C \simeq \Omega C $ and $G_{A^e} A = (\Ba A)^e \boxempty_{A_0^e}^{\tau^e} A \simeq \Ba A$. 
\end{lemma}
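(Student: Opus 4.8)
The plan is to reduce both statements to the two-sided bar–cobar resolution together with the Koszul duality equivalence already established in Theorem \ref{thm:modulekoszul}. I would treat the first statement in detail; the second is formally dual, with the bar construction in place of the cobar construction.

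First I would unpack the enveloping cotensor product. Writing $(\Omega C)^e = \Omega C \otimes (\Omega C)\op$ and $C_0^e = C_0 \otimes C_0$, the comodule $C$ carries left and right $C_0$-coactions, and cotensoring over a tensor product of coalgebras decomposes as an iterated cotensor, so that as graded $\Omega C$-bimodules
\[
(\Omega C)^e \boxempty_{C_0^e} C \;\cong\; \Omega C \boxempty_{C_0} C \boxempty_{C_0} \Omega C,
\]
the left (resp. right) factor acting through the first (resp. opposite) tensorand. The point to verify is that under this identification the twisting by $\tau^e$ becomes the sum $d_\tau^L + d_\tau^R$ of the left and right twistings by the universal twisting cochain $\tau \colon C \to \Omega C$; this uses that the curvature of $C^e$ is $h \otimes \eta + \eta \otimes h$ and that $\tau^e$ is the induced enveloping twisting cochain. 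Granting this, the right-hand side is exactly $F_C\big(G_C(\Omega C)\big)$, where $\Omega C$ is the regular left module and the right $\Omega C$-action is retained throughout.

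Then I would invoke Theorem \ref{thm:modulekoszul}(2): the counit $F_C G_C(\Omega C) \to \Omega C$, given by $a \otimes c \otimes b \mapsto a\,\tau(c)\,b$, is a quasi-isomorphism of left $\Omega C$-modules. Since this map is manifestly right $\Omega C$-linear it is a map of $\Omega C$-bimodules, and quasi-isomorphism is detected on underlying complexes, so it is a weak equivalence in $\De((\Omega C)^e)$. This proves $F_{C^e}C \simeq \Omega C$, naturally in $C$ since the counit is natural. Alternatively — and this is what makes the argument self-contained — one runs directly the filtration argument from the proof of Theorem \ref{thm:modulekoszul}: filter $\Omega C \boxempty_{C_0}^\tau C \boxempty_{C_0}^\tau \Omega C$ by the number of $\overline{C}$-factors in the middle tensorand (which is $0$ or $1$). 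The filtration-lowering part of the differential sends $a \otimes \overline c \otimes b$ to $a\,\tau(\overline c) \otimes b - a \otimes \tau(\overline c)\,b$, so the associated graded is precisely the standard length-two free resolution $0 \to \Omega C \boxempty_{C_0} \overline{C}[1] \boxempty_{C_0} \Omega C \to \Omega C \boxempty_{C_0} \Omega C \to \Omega C \to 0$ of the diagonal bimodule of the tensor algebra $\Omega C = T_{C_0}\overline{C}[1]$; its only homology is $\Omega C$, whence acyclicity of the cocone of the augmentation.

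For the second statement I would argue dually. Identifying $(\Ba A)^e \boxempty_{A_0^e}^{\tau^e} A \cong \Ba A \boxempty_{A_0}^\tau A \boxempty_{A_0}^\tau \Ba A = G_A\big(F_A(\Ba A)\big)$, the unit $\Ba A \to G_A F_A(\Ba A)$ is a weak equivalence of $\Ba A$-comodules by Theorem \ref{thm:modulekoszul}(1). The one extra subtlety here is that weak equivalence of $(\Ba A)^e$-comodules is detected only after applying a cobar functor, so promoting a one-sided comodule weak equivalence to a bicomodule one is not automatic; I would handle this either by applying $F_{(\Ba A)^e}$ and reducing to the first statement with $C = \Ba A$, or, more directly, by the same filtration argument applied to the middle $A$-factor, whose associated graded is the standard length-two cofree resolution of the diagonal bicomodule of $\Ba A$. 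This bicomodule-versus-comodule promotion, together with the verification that $\tau^e$ induces exactly the two-sided twisting (including signs and the curvature contribution), is the main obstacle; everything else is a direct transcription of the one-object, one-sided computation already carried out in Theorem \ref{thm:modulekoszul}.
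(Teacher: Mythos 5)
Your treatment of the first statement is exactly the paper's own proof: you identify $(\Omega C)^e \boxempty_{C_0^e}^{\tau^e} C$ with $F_CG_C(\Omega C)$ by matching the two summands of $\tau^e = \tau\otimes 1 + 1\otimes\tau\op$ with the two one-sided twists, then quote the counit quasi-isomorphism from Theorem \ref{thm:modulekoszul} (your ``self-contained'' filtration alternative is literally the argument inside that theorem's proof, transported to the middle factor), and your remark that the counit is right-linear so that the bimodule case follows from detection of quasi-isomorphisms on underlying complexes is the point the paper leaves implicit.

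Where you genuinely add something is the second statement, which the paper dismisses with ``can be shown in the same way.'' The subtlety you flag is real: the unit $\Ba A \to G_AF_A(\Ba A)$ is a weak equivalence of \emph{left} $\Ba A$-comodules by Theorem \ref{thm:modulekoszul}(1), and since weak equivalences of $(\Ba A)^e$-comodules are defined by applying the cobar functor $F_{(\Ba A)^e}$ rather than detected on underlying complexes, the promotion to bicomodules is not formal — this is precisely the asymmetry between $\De$ and $\Dco$ that the paper's one-line remark glosses over. Both of your proposed repairs are viable, but note that for the filtration route you must extract more than acyclicity of the cocone (an acyclic comodule need not be coacyclic, which is the very trap you warn against): the filtration argument of Theorem \ref{thm:modulekoszul} actually exhibits the cocone as an extension of two \emph{contractible} comodules, and contractibility, unlike acyclicity, is preserved by $F_{(\Ba A)^e}$, which is what yields the required weak equivalence. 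Similarly, your reduction via $F_{(\Ba A)^e}$ to the first statement with $C = \Ba A$ requires checking that the counit identifications are compatible with the map being tested (the relevant square only commutes up to the homotopies built into the adjunction), so either route still needs a few lines of verification; with that caveat, your write-up is more careful on this half of the lemma than the paper itself.
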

\begin{proof}
 For the first statement we have $(\Omega C)^e \boxempty_{C_0^e}^\tau C  = F_{C^e}C$ by definition, see Proposition \ref{prop:moduleadjunction}. But $(\Omega C)^e \boxempty_{C_0^e}^{\tau^e} C \simeq \Omega C \boxempty_{C_0}^\tau C \boxempty_{C_0}^\tau \Omega C$ is also equivalent to $F_CG_C(\Omega C)$ as both comodules have the same underlying graded object and the same differential, compare the proof of \cite[Corollary 5.7]{guan2023hochschild}.
    Note that the universal twisting cochain $C^e \to \Omega C^e$ takes the form $\tau^e = \tau_C \otimes 1 + 1 \otimes \tau_{C\op}$, thus the twisting on the complex $\Omega C^e \boxempty_{C^e_0}^{\tau^e} C$ has two summands, just like for $G_CF_C(\Omega C)$. Since $G_CF_C(\Omega C)$ is weakly equivalent to $\Omega C$ by  Theorem \ref{thm:modulekoszul}, we are done.
 
    The second statement can be shown in the same way.
\end{proof}
\begin{definition}\label{def:cohh}
    Let $C$ be a pointed curved coalgebra. 
    We define its \emph{coHochschild complex} $\coHHcx_*(C)$
    to have underlying $k$-module the same as
    $$C \boxempty_{C^e_0} \Omega C $$
    with differential
    induced by the differentials on $\Omega C$ and $C$ plus the two-sided twisting
    $$c(c_1|\dots|c_n)
    \mapsto
    \sum \pm c^{(1)}(c^{(2)}|c_1|\dots|c_n) +
    \sum \pm c^{(2)}(c_1|\dots|c_n|c^{(1)})
    $$
    with Koszul sign convention and Sweedler notation for the coproduct of $C$. As this is a two-sided version of the twist used in Proposition \ref{prop:moduleadjunction} we write the complex as
    $$C \boxempty_{C^e_0}^\pi \Omega C.$$
    See \cite[Section 3.5]{rivera2024cyclichomologycategoricalcoalgebras} or \cite{rivera2022algebraic} for more details on the construction. We define the \emph{coHochschild homology} of $C$, denoted by $\coHH_*(C)$, as the homology of $\coHHcx_*(C)$.

By \cite[Section 5.3]{rivera2024cyclichomologycategoricalcoalgebras} $\coHHcx_*(C)$ is a mixed complex and thus we may define the \emph{cocyclic complex} $\coHCcx_*(C) = (\coHHcx_*(C)[u^{-1}], b+uB)$ as well as the
\emph{negative cocyclic complex} $\coHNcx_*(C) = (\coHHcx_*(C)[[u]], b+uB)$.The \emph{cocyclic homology} $\coHC_*(C)$ and \emph{negative cocyclic homology} $\coHN_*(C)$ of $C$ are just the homology of these complexes.
\end{definition}

\begin{remark}\label{rem:hochschild}
We may write the coHochschild complex as \[C \boxempty_{C^e}(C^e\boxempty_{C^e_0}^{\tau^e} \Omega C).\]
If $k$ is a field we note that $\coHHcx_*(C) \simeq C \boxempty_{C_0}^\pi \Omega C$ is an explicit model for the derived cotensor product $C \boxempty_{C^e}^L C$ using $C \simeq C^e \boxempty_{C_0^e}^{\tau^e} \Omega C^e$ from Lemma \ref{lem:aec} where we consider the derived cotensor product as a functor on the coderived category, see \cite[Section 4.7]{positselski2011two}. 
    
\end{remark}
This notion of Hochschild and cyclic homology is compatible with the notion for dg categories:
\begin{proposition}\label{prop:cohhishh}
       For any pointed curved coalgebra $C$, there is a natural quasi-isomorphism of mixed complexes \[\coHHcx_* (C)\xrightarrow{\simeq} \HHcx_*(\Omega C).\]
      Consequently, we have induced quasi-isomorphisms $\coHCcx_*(C) \simeq \HCcx_*(\Omega C)$, $\coHNcx_*(C) \simeq \HNcx_*(\Omega C)$, $\coHH_*(C) \cong \HH_*(\Omega C)$ and $\coHN_*(C) \cong \HN_*(\Omega C)$.
\end{proposition}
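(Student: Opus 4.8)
The plan is to deduce the statement from Lemma \ref{lem:aec}, which supplies exactly the bimodule resolution needed to compute the Hochschild homology of $\Omega C$, and then to match the resulting complex with the coHochschild complex of $C$ termwise, including its mixed structure. The Hochschild-level comparison is essentially formal given Lemma \ref{lem:aec}; the genuine work is transporting the cyclic operator.

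First I would recall that $\HHcx_*(\Omega C)$ is a model for the derived self-tensor product of the diagonal bimodule, $\Omega C \otimes^L_{(\Omega C)^e}\Omega C$, computed by replacing one copy of $\Omega C$ by a resolution that is $k$-free and built from free $(\Omega C)^e$-modules by iterated extensions. Lemma \ref{lem:aec} furnishes precisely such a resolution, namely the weak equivalence $F_{C^e}C = (\Omega C)^e \cotensorte C \xrightarrow{\simeq} \Omega C$ of $(\Omega C)^e$-modules. Substituting it and cancelling the free $(\Omega C)^e$-factor against the outer tensor yields
$$\HHcx_*(\Omega C) \;\simeq\; \Omega C \otimes^L_{(\Omega C)^e}\Omega C \;\simeq\; \Omega C \otimes_{(\Omega C)^e}\bigl((\Omega C)^e \cotensorte C\bigr) \;\cong\; \Omega C \cotensorte C,$$
where the only surviving twist is the two-sided one $\tau^e = \tau_C \otimes 1 + 1 \otimes \tau_{C\op}$. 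Comparing with $\coHHcx_*(C) = C \boxempty^\pi_{C^e_0}\Omega C$ of Definition \ref{def:cohh}, both complexes have the same underlying $k$-module after the evident symmetry isomorphism, and unravelling the formulas shows that the two-sided twist $\pi$ (a sum of a left and a right twist by $\tau$) is exactly the differential induced by $\tau^e$; this identifies the two complexes and gives the claimed natural quasi-isomorphism at the level of Hochschild/coHochschild complexes. Over a field this is also visible from Remark \ref{rem:hochschild}, where $\coHHcx_*(C)$ already models $C \boxempty^L_{C^e} C$; the explicit chain-level identification above makes it work over any PID, using that $C$ is $k$-free so that all the twisted complexes genuinely compute the derived functors.

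The main obstacle is upgrading this to a quasi-isomorphism of \emph{mixed} complexes, i.e.\ verifying that the identification intertwines the Connes operator $B$ on $\HHcx_*(\Omega C)$ with the cocyclic $B$-operator on $\coHHcx_*(C)$ from \cite[Section 5.3]{rivera2024cyclichomologycategoricalcoalgebras}. This demands a careful, sign-tracked comparison of the two explicit $B$-operators under the symmetry isomorphism used above, and I expect it to be the only delicate point; the rest is formal bookkeeping.

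Granting the mixed-complex quasi-isomorphism, all remaining assertions follow from the general principle that a quasi-isomorphism of mixed complexes induces quasi-isomorphisms on the associated cyclic and negative cyclic complexes. Concretely, I would filter $(\coHHcx_*(C)[u^{-1}], b+uB)$ and $(\coHHcx_*(C)[[u]], b+uB)$, and the corresponding complexes for $\Omega C$, by $u$-degree; since $uB$ strictly raises filtration, the associated graded differential is $b$ and the comparison map restricts on each graded piece to a shift of the Hochschild-level quasi-isomorphism. A comparison of the resulting spectral sequences then gives the quasi-isomorphisms $\coHCcx_*(C) \simeq \HCcx_*(\Omega C)$ and $\coHNcx_*(C) \simeq \HNcx_*(\Omega C)$, where for the completed $[[u]]$ case one additionally invokes completeness and exhaustiveness of the filtration (a Mittag-Leffler argument). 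Taking homology yields the stated isomorphisms $\coHH_*(C) \cong \HH_*(\Omega C)$ and $\coHN_*(C) \cong \HN_*(\Omega C)$.
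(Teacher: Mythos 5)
Your Hochschild-level comparison is essentially sound, and it in fact mirrors computations the paper performs elsewhere: the identification $\HHcx_*(\Omega C)\simeq \Omega C\otimes_{(\Omega C)^e}\bigl((\Omega C)^e\cotensorte C\bigr)$ via Lemma \ref{lem:aec} is used in the proof of Theorem \ref{thm:main}, and Remark \ref{rem:hochschild} records the same observation over a field. Modulo two points you gloss over --- that $F_{C^e}C$ really is a semifree (cofibrant) resolution, which needs $k$-freeness of $C$ and its conilpotency filtration, and that the comparison map to the bar resolution must be chosen naturally (e.g.\ as the explicit map induced by the universal twisting cochain $\tau$, not by an abstract cofibrancy lift) --- this half of your argument is correct. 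However, the proposition is a statement about \emph{mixed} complexes, and that is exactly the step you defer: you say the comparison of the Connes operator $B$ on $\HHcx_*(\Omega C)$ with the cocyclic operator on $\coHHcx_*(C)$ ``demands a careful, sign-tracked comparison'' and that you ``expect'' it to work. This is a genuine gap, not bookkeeping. The Connes operator on the cyclic bar complex does not preserve the small model obtained from the resolution $F_{C^e}C$, so there is no ``evident symmetry isomorphism'' under which the two operators can be matched termwise; one must write down the explicit $\tau$-induced chain map $\coHHcx_*(C)\to\HHcx_*(\Omega C)$ and verify its compatibility with the two $B$-operators (strictly, or up to the homotopies required for a map of mixed complexes). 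That verification is precisely the content of the result the paper invokes: its entire proof is the citation of \cite[Theorem 5.3.5]{rivera2024cyclichomologycategoricalcoalgebras}, where the cocyclic structure on $\coHHcx_*(C)$ is constructed and the comparison is carried out. So your proposal re-derives the easy half and leaves unproven the half that carries the actual content.

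The downstream deductions are fine: granted a quasi-isomorphism of mixed complexes, your $u$-adic filtration argument (exhaustive filtration for $\coHCcx_*$, complete filtration plus the Mittag-Leffler/surjective-tower argument for $\coHNcx_*$, with associated graded equal to shifts of the Hochschild-level map) correctly yields $\coHCcx_*(C)\simeq\HCcx_*(\Omega C)$, $\coHNcx_*(C)\simeq\HNcx_*(\Omega C)$, and the isomorphisms on homology. In short, the proposal becomes a proof exactly when the $B$-compatibility is actually established; as written, it is assumed where it matters most.
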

\begin{proof}
    This is \cite[Theorem 5.3.5]{rivera2024cyclichomologycategoricalcoalgebras}.
\end{proof}

We now consider the key finiteness condition on coalgebras. Recall that we call an object $K$ in a triangulated category (or stable $\infty$-category) \emph{compact} if $\Hom(K, -)$ commutes with arbitrary direct sums. 

\begin{definition}
We say $C$ is \emph{proper} if it is compact as a $C^e$-comodule.
\end{definition}

The most important class of examples comes from the following lemma. 
\begin{lemma}\label{lem:properfinite}
    If a pointed curved coalgebra $C$ is weakly equivalent as a $C^e$-comodule to a direct summand of a $C^e$-comodule whose underlying graded $k$-module is finitely presented, then $C$ is proper. In particular any pointed curved coalgebra of finite rank over $k$ is proper.
\end{lemma}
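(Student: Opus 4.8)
The plan is to prove compactness directly, reducing everything to the defining finiteness property of a finitely presented $k$-module. Since an object is compact precisely when the functor it corepresents commutes with arbitrary direct sums, and since compactness is invariant under weak equivalence (an isomorphism in the homotopy category) and passes to retracts, it suffices to prove that a single $C^e$-comodule $M$ whose underlying graded $k$-module is finitely presented is compact in $\HDco(C^e)$. The ``in particular'' clause is then immediate: if $C$ has finite rank then, being $k$-free, it is a finite-rank free $C^e$-comodule, which is a special case of the above.

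First I would compute the corepresented functor. Assuming in addition that $M$ is $k$-free, Corollary \ref{cor:dcohoms} applied to $C^e$ (cf.\ Corollary \ref{cor:koszulemoduleenriched} and the identification in Lemma \ref{lem:aec}) gives a natural isomorphism $\Hom_{\HDco(C^e)}(M,N) \cong H_0\,\uHom^{\tau^e}_k\big(M,\ (\Omega C)^e \boxempty^{\tau^e}_{C^e_0} N\big)$. I would then verify that the right-hand side, as a functor of $N$, commutes with arbitrary direct sums by splitting it into its constituent operations and treating each separately. Coproducts in $\Dco(C^e)$ are modeled by direct sums of underlying comodules, since $F_{C^e}$ is a left adjoint and hence preserves them (Theorem \ref{thm:modulekoszul}); the twisted cotensor $(\Omega C)^e \boxempty^{\tau^e}_{C^e_0} -$ commutes with direct sums because $C^e_0$ is cosemisimple, so that cotensoring over it is exact and additive; the functor $\uHom_k(M,-)$ commutes with direct sums precisely because $M$ is finitely presented and bounded; and $H_0$ commutes with direct sums of complexes. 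Chaining these identifications yields $\Hom_{\HDco(C^e)}(M,\bigoplus_i N_i)\cong\bigoplus_i\Hom_{\HDco(C^e)}(M,N_i)$, so $M$ is compact.

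The step I expect to be the main obstacle is reconciling the $k$-freeness hypothesis of Corollary \ref{cor:dcohoms} with the possible presence of torsion, since over a PID a finitely presented module need not be free, while the object $C$ of interest need not have finite rank (so one cannot simply run the previous argument on $C$ itself). My plan here is to exploit that over a PID every finitely presented graded $k$-module admits a two-term resolution by finite-rank free graded modules, and to upgrade this to a cofiber sequence of $C^e$-comodules exhibiting $M$ as the cone of a map between $k$-free, finitely presented --- hence compact by the previous paragraph --- comodules; compactness of $M$ then follows from closure of compact objects under cofibers in the stable $\infty$-category $\Dco(C^e)$. The delicate point is making this resolution compatible with the coaction and the twisted differential, i.e.\ lifting the underlying $k$-module resolution into $\Dco(C^e)$; equivalently, one may argue on the Koszul-dual side via Theorem \ref{thm:modulekoszul} that $F_{C^e}M$ is a retract of a finite cell $(\Omega C)^e$-module. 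In all of our intended applications $M$ is in fact finite-rank free, so this subtlety does not intervene.
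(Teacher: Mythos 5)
Your core argument is correct and is essentially the paper's own proof: reduce, via invariance of compactness under weak equivalence and retracts, to a single comodule $N$ whose underlying graded module is finitely presented and $k$-free, compute $\Hom_{\HDco(C^e)}(N,\oplus_i X_i)$ as $H_0$ of the twisted hom complex of Corollary \ref{cor:dcohoms}, and pull the direct sum outside using finite presentation of $N$. The paper performs exactly this computation (it phrases it for left comodules and applies it to $C^e$), so on this part there is nothing to add.

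Where you diverge is in taking the word ``finitely presented'' seriously over a PID, and you have identified a real subtlety --- but note that the paper does not resolve it either: its proof simply asserts that it suffices to treat comodules whose underlying graded module is \emph{free} of finite rank, which over a PID is strictly stronger than finitely presented. Your proposed repair is, as you yourself flag, incomplete precisely at its key step: a two-term free resolution $0 \to P_1 \to P_0 \to M \to 0$ of the underlying graded $k$-module does not obviously lift into $\Dco(C^e)$, since comodule categories have cofree rather than free objects, and a graded-level lift of the twisted differential of $(\Omega C)^e \boxempty^{\tau^e}_{C^e_0} M$ to $(\Omega C)^e \boxempty_{C^e_0} P_0$ need not square to zero (its square is only forced to land in the kernel term, so one is pushed into perturbation arguments rather than a direct lift). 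If you want to pursue the general case, a cleaner first reduction is available: since $C^e$ is $k$-free by the paper's standing convention on coalgebras, the torsion submodule $T(M) \subseteq M$ is a subcomodule preserved by the differential, and the short exact sequence $0 \to T(M) \to M \to M/T(M) \to 0$ yields a triangle in $\Dco(C^e)$ whose third term is $k$-free of finite rank and hence compact; this reduces the problem to finitely presented torsion comodules, which still require a separate argument. Since every comodule appearing in the paper's applications (and in yours) is $k$-free, this loose end has no downstream consequences.
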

\begin{proof}
It suffices to show that any curved comodule $N$ whose underlying graded is free of finite rank is compact as the property is stable under weak equivalence and direct summands.
We consider a direct sum $\oplus_i X_i$ in $C\Comod$.
Using Corollary \ref{cor:dcohoms}
we may compute
$\Hom_{\HDco(C)}(N, \oplus_i X_i)$
as
$$H^0(\uHom^\tau(N, \Omega C \boxempty_{C_0}^\tau  (\oplus_i X_i)) \simeq  \oplus_i H^0(\uHom^\tau(N, \Omega C \boxempty_{C_0}^\tau  X_i)$$ using finite-presentation of $N$.
Applying Corollary \ref{cor:dcohoms} again we see that $$\Hom_{\HDco(C)}(N, \oplus_i X_i) = \oplus_i \Hom_{\HDco(C)}(N,  X_i).\qedhere$$
\end{proof}
\begin{remark}
   The converse holds if $k$ is a field: Any compact $C$-comodule is isomorphic in the coderived category to a retract of a finite-dimensional comodule.
   
Let $N$ be compact and write it as a union of finite-dimensional subcomodules $N_i$. 
We recall that filtered colimits are homotopy colimits in a combinatorial model category generated by cofibrations with $\aleph_0$-small source and target, see \cite[Proposition 7.3]{dugger2001combinatorial} together with \cite[Proposition A.1.2.5]{lurie2009higher}.
Furthermore $C^e\Comod$ is of this form as the category of comodules is generated by finite-dimensional comodules.
Thus we  have $C \simeq \hocolim(N_i)$ and by compactness $\id_N$ factors, up to weak equivalence, through a finite subcomodule, presenting $N$ as a homotopy summand of a finite-dimensional comodule.
\end{remark}

\begin{proposition}\label{prop:smoothproper}
A pointed curved coalgebra $C$ is proper if and only if the dg category $\Omega C$ is smooth. Conversely, a dg category $A$ is smooth if and only if the pointed curved coalgebra $\Ba A$ is proper.
\end{proposition}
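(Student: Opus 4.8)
The plan is to prove the statement via the Koszul-duality equivalence of categories from Theorem~\ref{thm:modulekoszul}, transported to the enveloping level, together with the compatibility of compactness with equivalences of stable $\infty$-categories. The key observation is that properness of $C$ is defined as compactness of $C$ as an object of $\Dco(C^e)$, while smoothness of $\Omega C$ is compactness of $\Omega C$ as an object of $\De((\Omega C)^e)$, so the task is to match these two compactness conditions under an equivalence sending the one distinguished object to the other.

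First I would set up the equivalence at the enveloping level. Applying Theorem~\ref{thm:modulekoszul}(2) to the coalgebra $C^e$ (noting $\Omega(C^e) = (\Omega C)^e$, or at least that the two are weakly equivalent dg categories, which should be checked as it is the one genuinely structural input) gives a weak equivalence of $\infty$-categories
\[
F_{C^e} \colon \Dco(C^e) \xrightarrow{\ \simeq\ } \De((\Omega C)^e).
\]
An equivalence of stable $\infty$-categories preserves arbitrary direct sums (it is an exact equivalence with an exact inverse) and hence preserves and reflects compact objects. Therefore $C$ is compact in $\Dco(C^e)$ if and only if $F_{C^e}C$ is compact in $\De((\Omega C)^e)$.

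Second, the decisive identification is Lemma~\ref{lem:aec}, which gives the natural weak equivalence $F_{C^e}C = (\Omega C)^e \cotensorte C \simeq \Omega C$ in $\De((\Omega C)^e)$. Combining the two steps, $C$ is proper (i.e.\ $C$ is compact in $\Dco(C^e)$) if and only if $\Omega C$ is compact as an $(\Omega C)^e$-bimodule, which is precisely the definition of $\Omega C$ being smooth. This proves the first assertion. For the converse statement, I would either run the dual argument using the second half of Lemma~\ref{lem:aec} (the equivalence $G_{A^e}A \simeq \Ba A$) and Theorem~\ref{thm:modulekoszul}(1), or simply deduce it formally: given a dg category $A$, apply the first assertion to $C = \Ba A$ and use the counit weak equivalence $\Omega \Ba A \simeq A$ from Theorem~\ref{thm:algebrakoszul}, so that $\Ba A$ is proper iff $\Omega \Ba A$ is smooth iff $A$ is smooth, the last step using that smoothness is invariant under quasi-equivalence of dg categories.

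The main obstacle I expect is the bookkeeping around the enveloping objects: one must confirm that $\Omega(C^e)$ and $(\Omega C)^e$ agree (or agree up to the weak equivalence needed to apply Theorem~\ref{thm:modulekoszul} to $C^e$), that the universal twisting cochain for $C^e$ decomposes as $\tau^e = \tau_C \otimes 1 + 1 \otimes \tau_{C\op}$ compatibly with the bimodule structures, and that the object $C$ viewed as a $C^e$-comodule really corresponds under $F_{C^e}$ to the bimodule $\Omega C$ and not to some twist or shift of it. All of this is supplied by Lemma~\ref{lem:aec} and the discussion preceding it, so once those identifications are invoked the remaining argument is a short formal consequence of the fact that exact equivalences preserve compact objects.
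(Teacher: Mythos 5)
Your proposal is correct and follows essentially the same route as the paper: Koszul duality for modules (Theorem~\ref{thm:modulekoszul}) applied at the enveloping level, the identification $(\Omega C)^e \simeq \Omega(C^e)$ (which the paper isolates as Lemma~\ref{lem:omegaop}, supplying exactly the check you flag as the structural input), Lemma~\ref{lem:aec} to match $C$ with $\Omega C$, and the same formal deduction of the converse from invariance of smoothness under quasi-equivalence.
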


The proof needs the following observation:
\begin{lemma}\label{lem:omegaop}
There is a natural quasi-isomorphism of dg algebras $(\Omega C)^e \simeq \Omega(C^e)$.
\end{lemma}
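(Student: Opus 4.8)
The plan is to build an explicit comparison map and then show it is a quasi-isomorphism. First I would note that $(\Omega C)\op = \Omega(C\op)$, so that $(\Omega C)^e = \Omega C \otimes \Omega(C\op)$, and assemble the two universal twisting cochains into a single twisting cochain
\[
\tau^e = \tau_C \otimes 1 + 1 \otimes \tau_{C\op} \colon C^e = C \otimes C\op \longrightarrow \Omega C \otimes (\Omega C)\op = (\Omega C)^e,
\]
which is the formula already used in the proof of Lemma \ref{lem:aec}. The point is that $\tau^e$ is a Maurer--Cartan element of the (curved) convolution algebra $\uHom(C^e,(\Omega C)^e)$: on mixed inputs the two summands each vanish, and the curvature $h\otimes 1 + 1\otimes h$ of $C^e$ is exactly matched by the failure of $\tau_C$ and $\tau_{C\op}$ to be flat. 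By the universal property of the cobar construction --- twisting cochains $C^e \to A$ correspond bijectively to dg-algebra maps $\Omega(C^e)\to A$ --- the element $\tau^e$ determines a morphism of dg categories over $C^e_0 = C_0\otimes C_0$,
\[
\Phi \colon \Omega(C^e)\longrightarrow (\Omega C)^e,
\]
natural in $C$. This is the map realizing the claimed equivalence.

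Second, I would prove $\Phi$ is a quasi-isomorphism. On generators $\Phi$ sends a pure-left letter $\bar c\otimes c_0$ to $[\bar c]\otimes 1$, a pure-right letter $c_0\otimes \bar c$ to $1\otimes[\bar c]$, and kills the mixed letters in $\overline{C}\otimes\overline{C}\subset \overline{C^e}$; it is visibly surjective, so it suffices to show $\ker\Phi$ is acyclic. The relevant part of the cobar differential is the deconcatenation coming from $\Delta_{C^e}$, which splits a mixed letter $\bar c\otimes\bar c'$ into the product of a pure-left and a pure-right letter (plus the opposite product with a Koszul sign); under $\Phi$ these two products map to the same element up to sign and therefore cancel, while in $\ker\Phi$ the differential pairs each mixed generator isomorphically with the corresponding graded commutator of a pure-left and a pure-right letter. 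Running the spectral sequence of the weight (coradical) filtration, this splitting is the leading differential and exhibits $\ker\Phi$ as an extension of contractible complexes, hence acyclic; the filtration is exhaustive and bounded in each total degree, so the sequence converges. This is the Künneth/Eilenberg--Zilber quasi-isomorphism for the cobar construction in the many-object curved setting, and the two-step filtration template is precisely the one used in the proof of Theorem \ref{thm:modulekoszul}.

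Alternatively --- and more robustly over a PID --- one can argue conceptually: the Eilenberg--Zilber shuffle equivalence $\Ba\Omega C\otimes \Ba\Omega C\op \xrightarrow{\simeq} \Ba(\Omega C\otimes\Omega C\op)$ together with the unit $C\xrightarrow{\simeq}\Ba\Omega C$ of Koszul duality (a weak equivalence even over a PID by Theorem \ref{thm:algebrakoszul} and Remark \ref{rem:htpycats}) gives $\Ba((\Omega C)^e)\simeq C^e$; applying $\Omega$, which preserves weak equivalences, and using the counit $\Omega\Ba\simeq\mathbf{1}$ yields $(\Omega C)^e\simeq\Omega\Ba((\Omega C)^e)\simeq\Omega(C^e)$, compatibly with $\Phi$. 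The main obstacle in either route is the same: controlling the homological algebra of the comparison --- proving acyclicity of $\ker\Phi$, equivalently convergence of the spectral sequence --- while correctly bookkeeping the curvature contributions, the Koszul signs, and the $C_0$-bicomodule composability constraints. Freeness of the underlying graded $k$-modules is what guarantees these twisted complexes behave as they would over a field.
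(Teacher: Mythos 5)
Your comparison map is the right one, and its construction is sound: $\tau^e = \tau_C\otimes 1 + 1\otimes \tau_{C\op}$ is a Maurer--Cartan element of $\uHom(C^e,(\Omega C)^e)$ (the cross terms of $\tau^e\cup\tau^e$ cancel by Koszul signs since $\tau$ has odd degree, and the diagonal terms absorb the curvature $h\otimes\eta+\eta\otimes h$), so the universal property of $\Omega$ gives a natural dg functor $\Phi\colon \Omega(C^e)\to(\Omega C)^e$, and surjectivity plus acyclicity of $\ker\Phi$ would indeed finish the proof. However, your route inverts the paper's division of labour, and in doing so you \emph{assume} what the paper proves and only \emph{sketch} what the paper cites. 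The paper's proof is: (i) cite that $\Omega$ is quasi-strong monoidal, i.e.\ that $\Omega(C\otimes C')\to\Omega C\otimes\Omega C'$ is a natural quasi-isomorphism \cite{holstein2023enrichedkoszuldualitydg}; (ii) construct explicitly the isomorphism $\Omega(C\op)\cong(\Omega C)\op$ by reversing letters with Koszul signs. Your opening assertion ``$(\Omega C)\op=\Omega(C\op)$'' is exactly (ii), and it is not a tautology: the underlying map $C\op\to(\Omega C)\op$, $c\mapsto[\bar c]$, is \emph{not} a twisting cochain --- the two Koszul swaps (in $\Delta\op$ and in $\mu\op$) produce an overall sign, $\tau\cup'\tau=-(\tau\cup\tau)$, so $\tau_{C\op}$ must be defined through the sign-twisted reversal isomorphism. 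Without this, your $\tau^e$ does not satisfy the Maurer--Cartan equation at all.

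The genuine gap is the acyclicity of $\ker\Phi$, which is precisely statement (i) that the paper out-sources. Your description of the kernel is too small: $\ker\Phi$ is the two-sided ideal generated by the mixed letters and the commutator elements, and it contains words with arbitrarily many mixed letters. Hence the filtration by the number of mixed letters is unbounded, and the ``extension of two contractible complexes'' template from the proof of Theorem \ref{thm:modulekoszul} (a three-step filtration of a twisted tensor product) does not transfer. The observation that the leading differential pairs each mixed \emph{generator} with a commutator says nothing yet about the homology of the ideal these elements generate; even in the associated-graded (square-zero) situation one needs a real argument --- e.g.\ a normal-form/diamond-lemma argument (the leading terms of the commutation relations have no overlaps, so there are no higher syzygies), or, cleaner, the acyclicity criterion for twisting cochains: $\Phi$ is a quasi-isomorphism because the twisted complex $C^e\otimes^{\tau^e}(\Omega C)^e$ factors, up to Koszul signs and composability constraints, as the tensor product of the two contractible complexes $C\otimes^{\tau}\Omega C$ and $C\op\otimes^{\tau_{C\op}}(\Omega C)\op$, and a tensor product of contractible $k$-free complexes is contractible over a PID. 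Note also that with nonzero curvature your weight filtration is not even preserved by the cobar differential (the curvature term \emph{decreases} the letter count). Your ``more robust'' alternative has the same gap in different clothing: the shuffle map $\Ba\Omega C\otimes\Ba\Omega(C\op)\to\Ba((\Omega C)^e)$ is a quasi-isomorphism of coalgebras, but in this framework quasi-isomorphisms of pointed curved coalgebras are \emph{not} weak equivalences (weak equivalences are the maps that $\Omega$ sends to quasi-equivalences), so concluding $(\Omega C)^e\simeq\Omega\Ba((\Omega C)^e)\simeq\Omega(C^e)$ still requires a filtered quasi-isomorphism argument, not just Eilenberg--Zilber.
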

\begin{proof}
    As $\Omega$ is quasi-strong monoidal \cite{holstein2023enrichedkoszuldualitydg} it suffices to show there is an isomorphism $s: \Omega(C\op) \cong \Omega(C)\op$. Define $s$ by reversing the order of tensor factors 
    \[ (c_1 | c_2 | \dots | c_n) \mapsto \pm (c_n | c_{n-1} | \dots c_1).\]
    On the underlying graded algebras this is an isomorphism $T\bar C\op[1] \cong (T\bar C[1])\op$. It remains to check compatibility with the differential. 
    It is clear that $s$ commutes with the component of the differential induced by $d_C$ and since $s \circ \Delta\op = \Delta $ it commutes with the component induced by the coproduct. 
    The component induced by the curvature is also unaffected by $s$.
\end{proof}
\begin{proof}[Proof of Proposition \ref{prop:smoothproper}]
    Koszul duality of modules (Theorem \ref{thm:modulekoszul}) together with Lemma \ref{lem:omegaop} gives a weak equivalence $\Dco(C^e) \simeq \De((\Omega C)^e)$. This identifies compact objects. It remains to show that it identifies the bicomodule $C$ with the bimodule $\Omega C$. 
    But this is exactly Lemma \ref{lem:aec}.

Now let us conversely assume $A$ is a given dg category. It is smooth if and only if $\Omega \Ba A$ is smooth, as being smooth is invariant under quasi-equivalence. This is the case if and only if $\Ba A$ is proper.
\end{proof}
\begin{corollary}\label{cor:properwelldefined}
    Let $B$ and $C$ be weakly equivalent curved pointed coalgebras. 
    Then $C$ is proper if and only if $B$ is.
\end{corollary}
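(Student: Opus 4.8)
The plan is to reduce the statement to the analogous invariance of smoothness for dg categories through Koszul duality. By Proposition \ref{prop:smoothproper}, a curved pointed coalgebra is proper exactly when its cobar dg category is smooth, so it suffices to show that $\Omega B$ is smooth if and only if $\Omega C$ is.

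First I would unwind the definition of weak equivalence. By Theorem \ref{thm:algebrakoszul} and the remark following it, the weak equivalences of curved pointed coalgebras are precisely the maps $f$ for which $\Omega(f)$ is a quasi-equivalence, and $\Omega$ induces an equivalence of homotopy categories. Hence a weak equivalence $B \simeq C$ (equivalently, an isomorphism in $\Ho(\ptdco)$) produces an isomorphism $\Omega B \cong \Omega C$ in $\Ho(\dgCat)$.

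It then remains to invoke the invariance of smoothness under quasi-equivalence, which is exactly the fact already used in the proof of Proposition \ref{prop:smoothproper}: a quasi-equivalence $A \to A'$ of dg categories induces an equivalence $\De(A^e) \simeq \De(A'^e)$ carrying the diagonal bimodule $A$ to $A'$, and an equivalence of triangulated categories preserves compact objects, so $A$ is smooth if and only if $A'$ is. Applying this to the isomorphism $\Omega B \cong \Omega C$ gives that $\Omega B$ is smooth if and only if $\Omega C$ is, and combining with Proposition \ref{prop:smoothproper} yields that $B$ is proper if and only if $C$ is.

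The only genuine content lies in the bookkeeping of the last step, namely that the equivalence of module categories induced by a quasi-equivalence matches the diagonal bimodules; but this is standard and is already implicitly assumed in Proposition \ref{prop:smoothproper}, so I expect no real obstacle. The corollary is essentially a transport of the already-established invariance of smoothness across the Koszul duality dictionary.
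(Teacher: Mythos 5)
Your argument is correct and is essentially the paper's own proof: a weak equivalence $B \simeq C$ yields a quasi-equivalence $\Omega B \simeq \Omega C$ by Theorem \ref{thm:algebrakoszul}, smoothness of dg categories is invariant under quasi-equivalence, and Proposition \ref{prop:smoothproper} translates this back to properness of the coalgebras. The extra detail you supply on why quasi-equivalences preserve smoothness is the same fact the paper already relies on implicitly in the proof of Proposition \ref{prop:smoothproper}.
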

\begin{proof}
    If $C \simeq B$ then $\Omega C \simeq \Omega B$ by Theorem \ref{thm:algebrakoszul}, and thus $C$ is proper if and only if $B$ is by Proposition \ref{prop:smoothproper}.
\end{proof}

\begin{remark}
    One difference to the algebra-setting is that while weak equivalences of bimodules and dg algebras or dg categories are defined in the same way as quasi-isomorphisms (or quasi-equivalences), the weak equivalences of curved coalgebras and their comodules are defined quite differently. While it is clear from the definition, that a curved pointed coalgebra of finite rank is proper, it does not follow that any proper pointed curved coalgebra is weakly equivalent to a pointed curved coalgebra of finite rank.
\end{remark}

We now turn to define an appropriate dual for a proper coalgebra. For any left comodule $M$ we have the natural maps
\[ M^* \xrightarrow{\alpha} \uHom(M,C) \xleftarrow{\beta} M^* \otimes C,\]
given by $\alpha(f)=(\id \otimes f) \circ \gamma$ where $\gamma: M \to C \otimes M $ is the coaction,
and
$\beta(g \otimes c): m \mapsto g(m)c$. 
When $M$ is finitely presented, the map $\beta$ is an isomorphism, but in general it is not possible to invert $\beta$ even if $C$ is proper.

If $M$ is finitely presented we define the coaction $\gamma^*: M^* \to M^* \otimes C$ as $\beta^{-1} \circ \alpha$ and have the following lemma that applies in many important examples:

\begin{lemma}\label{lem:dualcomodule}
    For any  left $C$-comodule $M$ finitely presented over $k$ the coaction $\gamma^*$ makes $M^*$ into a right $C$-comodule. In particular if $C$ itself is finitely presented over $k$ then $C^*$ has a natural structure of a $C$-bicomodule.
\end{lemma}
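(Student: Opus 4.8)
The plan is to obtain the right-$C$-comodule structure on $M^*$ by dualizing the left-comodule structure of $M$, using the isomorphism $\beta$ to pass between $\uHom(M,C)$ and $M^*\otimes C$. First I would unwind the two maps: writing the coaction in Sweedler notation as $\gamma(m)=\sum m_{(-1)}\otimes m_{(0)}$, the map $\alpha$ sends $f\in M^*$ to $m\mapsto \sum m_{(-1)}f(m_{(0)})$, so that $\gamma^*(f)=\beta^{-1}\alpha(f)=\sum f_{[0]}\otimes f_{[1]}$ is characterized by $\sum f_{[0]}(m)\,f_{[1]}=\sum m_{(-1)}f(m_{(0)})$ for all $m$. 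This is well-defined precisely because $\beta$ is an isomorphism for finitely presented $M$. Conceptually, post-composing $\alpha$ with evaluation against $\phi\in C^*$ exhibits $M^*$ as a right module over the convolution algebra $C^*=\uHom(C,k)$, and $\gamma^*$ corepresents this action, which is possible exactly when $\beta$ is invertible.

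Next I would verify the right-comodule coaxioms by transporting the left-comodule axioms of $M$ along $\beta$. Counitality $(\id_{M^*}\otimes\epsilon)\gamma^*=\id_{M^*}$ follows by applying $\epsilon$ to the characterizing identity and invoking counitality of $\gamma$; coassociativity $(\gamma^*\otimes\id_C)\gamma^*=(\id_{M^*}\otimes\Delta)\gamma^*$ follows by dualizing $(\Delta\otimes\id_M)\gamma=(\id_C\otimes\gamma)\gamma$. Each is a short Sweedler computation, legitimized by the naturality of $\beta$ together with the fact that the analogous map $M^*\otimes C\otimes C\to\uHom(M,C\otimes C)$ is again an isomorphism for finitely presented $M$, so that an identity of $C\otimes C$-valued maps transports to an identity in $M^*\otimes C\otimes C$.

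The main obstacle is compatibility with the differential and, in the curved setting, with the curvature. I would give $M^*$ the dual differential $d_{M^*}(f)=-(-1)^{|f|}f\circ d_M$ and first check that $\gamma^*$ is a chain map, deducing this from the fact that $\gamma$ intertwines $d_M$ with $d_C\otimes\id+\id\otimes d_M$; here the Koszul signs must be tracked carefully. The genuinely curved point is that a \emph{right} $C$-comodule must obey the curvature relation $d_{M^*}^2(f)=f*h$ (the right convolution action of the curvature $h\in C^*$), whereas $M$ as a \emph{left} comodule satisfies $d_M^2=h*(-)$; I expect the delicate step to be verifying that dualizing the left relation for $M$ yields exactly the right relation for $M^*$, with the left and right actions of $h$ correctly interchanged and the signs accounted for by the Koszul rule.

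Finally, for the bicomodule statement I would not apply the lemma to $C$ as a left comodule — which does not exist when $C$ is curved, as noted in the remark above — but run the same construction with $C$ replaced by its enveloping coalgebra $C^e$ and with the role of $M$ played by the left $C^e$-comodule $C$, which is finitely presented over $k$ because $C$ is, and hence so is $C^e=C\otimes C\op$. This produces a right $C^e$-comodule structure on $C^*$, and identifying right $C^e$-comodules with $C$-bicomodules through the swap isomorphism $(C^e)\op\cong C^e$ equips $C^*$ with its natural $C$-bicomodule structure.
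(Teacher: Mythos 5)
Your proposal is correct and follows essentially the same route as the paper: the paper's proof simply states that the comodule axioms are ``straightforward to check'' (which your Sweedler computations via $\beta$ carry out), and for the bicomodule statement it likewise considers $C$ as a left $C^e$-comodule rather than as a one-sided $C$-comodule --- exactly the point you identified as necessary in the curved setting. The only difference is that you spell out the curvature compatibility and the swap identification of right $C^e$-comodules with $C$-bicomodules, details the paper leaves implicit.
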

\begin{proof}
The conditions are straightforward to check. For the second observation we just consider $C$ as $C^e$-comodule.
\end{proof}

To define a more general dual $C^\vee$ we consider the Koszul image of $(\Omega C)^!$, where $A^! = \RHom_{A^e}(A,A^e)$ is well-behaved for $A = \Omega C$ as $A$ is smooth by Proposition \ref{prop:smoothproper}.
\begin{definition}
    For a proper pointed curved coalgebra $C$, define
    \[C^\vee \coloneqq C^e \boxempty^{\tau^e}_{C^e_0} \uHom^
{\tau^e}(C, \Omega C^e).\]
\end{definition}
Here the differential is explicitly given by combining the twisted cotensor product from Proposition \ref{prop:moduleadjunction} with the twisted hom space from Definition \ref{def:htpycomodule}. We think of $C^\vee$ as the linear dual of $C$. This is literally true if $C$ is of finite rank, see Lemma \ref{lem:computeadjoint} below. Another sense in which $C^\vee$ is dual to $C$ is revealed in Lemma \ref{lem:cptadjunction} below.

\begin{lemma}\label{lem:omegacshriek}
       For a proper curved coalgebra $C$ we have natural weak equivalences $C^\vee  \simeq G((\Omega C)^!)$ and $F(C^\vee) \simeq (\Omega C)^!$.
\end{lemma}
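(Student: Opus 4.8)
The plan is to reduce both weak equivalences to the single assertion that the twisted complex $\uHom^{\tau^e}(C,\Omega C^e)$ is a dg model for $(\Omega C)^!$ as an $\Omega(C^e)$-module; once this is established, the two statements of the lemma fall out formally from the unit and counit of Koszul duality. The key inputs are Corollary \ref{cor:koszulemoduleenriched} (the dg-enriched form of the module--comodule adjunction), Lemma \ref{lem:aec}, and the identification $(\Omega C)^e \cong \Omega(C^e)$ from Lemma \ref{lem:omegaop}.

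First I would record the definitional observation that $C^\vee = G_{C^e}\big(\uHom^{\tau^e}(C,\Omega C^e)\big)$, since by definition $C^\vee = C^e \boxempty^{\tau^e}_{C^e_0} \uHom^{\tau^e}(C,\Omega C^e)$ and $G_{C^e} = C^e\boxempty^{\tau^e}_{C^e_0}-$. Throughout I would invoke Proposition \ref{prop:smoothproper}, so that $\Omega C$ is smooth and $(\Omega C)^! = \RHom_{(\Omega C)^e}(\Omega C,(\Omega C)^e)$ is perfect and homotopically well behaved, and Lemma \ref{lem:omegaop} to identify $(\Omega C)^e$ with $\Omega(C^e)$. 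Since $C$ is $k$-free, so are $C^e$ and $C^e_0$, and the $k$-freeness hypotheses of Corollary \ref{cor:koszulemoduleenriched} are met.

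The main step is the identification $\uHom^{\tau^e}(C,\Omega C^e) \simeq (\Omega C)^!$ as $\Omega(C^e)$-modules. I would use that $F_{C^e}(C) \simeq \Omega C$ as $(\Omega C)^e$-modules (Lemma \ref{lem:aec}) and that $F_{C^e}(C^e_0) = \Omega(C^e)\boxempty^{\tau^e}_{C^e_0} C^e_0 \cong \Omega(C^e) = (\Omega C)^e$ is the regular module, cotensoring with the base recovering the module and $\tau^e$ vanishing on $C^e_0$. Then $\uHom^{\tau^e}(C,\Omega C^e) = \uHom^{\tau^e}_k\big(C, F_{C^e}(C^e_0)\big)$ is by definition $\RHom_{C^e}(C, C^e_0)$, and the quasi-isomorphism $LF_{C^e}$ of Corollary \ref{cor:koszulemoduleenriched}, together with invariance of $\RHom$ under weak equivalences in both arguments, yields
\[
\uHom^{\tau^e}(C,\Omega C^e) = \RHom_{C^e}(C,C^e_0) \xrightarrow{\ \simeq\ } \RHom_{(\Omega C)^e}\big(F_{C^e}(C), F_{C^e}(C^e_0)\big) \simeq \RHom_{(\Omega C)^e}(\Omega C, (\Omega C)^e) = (\Omega C)^!.
\]
Applying $G_{C^e}$, which preserves weak equivalences by Theorem \ref{thm:modulekoszul}, then gives $C^\vee = G_{C^e}\big(\uHom^{\tau^e}(C,\Omega C^e)\big) \simeq G\big((\Omega C)^!\big)$, and applying $F_{C^e}$ together with the fact that the counit $F_{C^e}G_{C^e}\to\id$ is a weak equivalence gives $F(C^\vee) \simeq FG\big((\Omega C)^!\big) \simeq (\Omega C)^!$.

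The step I expect to be the main obstacle is tracking the residual $\Omega(C^e)$-module structure, so that the displayed equivalence holds as one of $\Omega(C^e)$-modules rather than merely of complexes. The module structure on $(\Omega C)^!$ arises from the outer regular action on the target $(\Omega C)^e$; I would need to verify that this coincides with the residual action carried by $F_{C^e}(C^e_0) \cong \Omega(C^e)$ and that it is preserved by the natural quasi-isomorphism $LF_{C^e}$. Once this compatibility is checked, the application of $G_{C^e}$ is legitimate and the remaining deductions are formal consequences of the adjunction being an equivalence.
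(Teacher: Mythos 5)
Your proposal is correct and takes essentially the same approach as the paper: the heart of both arguments is the identification $\uHom^{\tau^e}(C,\Omega C^e)\simeq(\Omega C)^!$ obtained from Lemma \ref{lem:aec} (realizing $F_{C^e}(C)$ as a semi-free resolution of $\Omega C$ over $(\Omega C)^e$) together with the free-forgetful adjunction, after which both claims follow formally by applying $G$ and the counit of the Koszul equivalence. Your detour through Corollary \ref{cor:koszulemoduleenriched} and the auxiliary module $F_{C^e}(C^e_0)\cong\Omega(C^e)$ is just a repackaging of that adjunction --- the paper instead writes $(\Omega C)^!\simeq\uHom_{(\Omega C)^e}(F_{C^e}(C),(\Omega C)^e)\cong\uHom^{\tau^e}(C,(\Omega C)^e)$ directly --- and since the quasi-isomorphism $LF$ is at bottom exactly that free-forgetful adjunction isomorphism, it is compatible with the residual right $(\Omega C)^e$-module structure, so the compatibility you flag as the main obstacle does go through.
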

\begin{proof} 
The first weak equivalence follows since, for $A = \Omega C$, we have 
$$A^! \simeq \RHom_{A^e}(A, A^e) \simeq \uHom_{A^e}(A^e \boxempty^{\tau^e}_{A^e_0} C, A^e) \simeq \uHom^{\tau^e}(C, A^e)$$ using Lemma \ref{lem:aec} and the free-forgetful adjunction. The second weak equivalence in the statement is an immediate consequence.
\end{proof}
If $C$ has finite rank over $k$, we have a more familiar description of the dual.
  \begin{lemma}\label{lem:computeadjoint}
Let $E$ be a $C$-bicomodule weakly equivalent to $C$ that is free of finite rank over $k$ and consider the linear dual $E^*$ with its $C$-comodule structure from Lemma \ref{lem:dualcomodule}.
Then $F_{C^e}(E^*)  \simeq (\Omega C)^!$ and $G_{C^e}((\Omega C)^!) \simeq E^*$. In particular, we have a weak equivalence of $C$-bicomodules $E^* \simeq C^\vee$.
\end{lemma}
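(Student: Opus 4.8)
The plan is to prove the first weak equivalence $F_{C^e}(E^*) \simeq (\Omega C)^!$ directly, and then deduce the remaining two assertions formally. Conceptually, the point is that linear duality $E \mapsto E^*$ on finite-rank bicomodules is intertwined by Koszul duality with the bimodule duality $M \mapsto M^! = \RHom_{(\Omega C)^e}(M,(\Omega C)^e)$, so that the image under $F_{C^e}$ of the dual comodule $E^*$ is the derived dual of the image $F_{C^e}(E) \simeq \Omega C$.

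First I would unwind the left-hand side. By definition $F_{C^e}(E^*) = \Omega(C^e) \boxempty^{\tau^e}_{C^e_0} E^*$, and by Lemma \ref{lem:omegaop} we may identify $\Omega(C^e)$ with $(\Omega C)^e$. Since $E$ is free of finite rank over $k$ and $C^e_0$ is cosemisimple, the finite-rank duality gives a natural isomorphism of graded $k$-modules
\[
\Omega(C^e) \boxempty_{C^e_0} E^* \;\cong\; \uHom_{C^e_0}\!\left(E, \Omega(C^e)\right),
\]
which I would promote to an isomorphism of complexes
\[
\Omega(C^e) \boxempty^{\tau^e}_{C^e_0} E^* \;\cong\; \uHom^{\tau^e}\!\left(E, \Omega(C^e)\right),
\]
where the right-hand side carries the twisted differential in the style of Definition \ref{def:htpycomodule}. \emph{This is where the main work lies.} The twist on the left is built from the multiplication of $\Omega(C^e)$ together with the dual coaction $\gamma^*$ on $E^*$, while the twist on the right is built from the same multiplication together with the coaction $\gamma$ on $E$; since $\gamma^*$ is by construction (Lemma \ref{lem:dualcomodule}) the $k$-linear dual of $\gamma$, the two differentials correspond after transporting along the finite-rank isomorphism, once one tracks the Koszul signs and the decomposition $\tau^e = \tau_C \otimes 1 + 1 \otimes \tau_{C\op}$ of the two-sided twisting cochain.

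Next I would identify the right-hand side with $(\Omega C)^!$, following the proof of Lemma \ref{lem:omegacshriek}. By the free--forgetful (tensor--hom) adjunction for $F_{C^e}$ we have $\uHom^{\tau^e}(E, \Omega(C^e)) \cong \uHom_{\Omega(C^e)}(F_{C^e}(E), \Omega(C^e))$. Because $E$ is $k$-free, $F_{C^e}(E)$ is cofibrant (as in the proof of Corollary \ref{cor:dcohoms}), and since $E \simeq C$ as $C^e$-comodules we have $F_{C^e}(E) \simeq F_{C^e}(C) \simeq \Omega C$ by Lemma \ref{lem:aec}. Invariance of $\RHom$ under weak equivalences then yields
\[
\uHom_{\Omega(C^e)}\!\left(F_{C^e}(E), \Omega(C^e)\right) \simeq \RHom_{(\Omega C)^e}\!\left(\Omega C, (\Omega C)^e\right) = (\Omega C)^!,
\]
which is well behaved because $\Omega C$ is smooth by Proposition \ref{prop:smoothproper}. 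Combining the three displays gives $F_{C^e}(E^*) \simeq (\Omega C)^!$.

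Finally I would deduce the rest formally. Theorem \ref{thm:modulekoszul} applied to $C^e$ (via Lemma \ref{lem:omegaop}) makes $F_{C^e} \dashv G_{C^e}$ into mutually inverse equivalences of the (co)derived categories, so applying $G_{C^e}$ to $F_{C^e}(E^*) \simeq (\Omega C)^!$ and using the unit equivalence $E^* \simeq G_{C^e}F_{C^e}(E^*)$ gives $G_{C^e}((\Omega C)^!) \simeq E^*$. The ``in particular'' is then immediate: by Lemma \ref{lem:omegacshriek} we have $C^\vee \simeq G_{C^e}((\Omega C)^!)$, whence $E^* \simeq C^\vee$ as $C$-bicomodules. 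The only genuinely computational step is the differential-matching in the finite-rank isomorphism above; everything else is a formal consequence of Koszul duality and the results already established.
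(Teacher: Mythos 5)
Your proof is correct and takes essentially the same route as the paper's: the paper computes $(\Omega C)^! = \RHom_{(\Omega C)^e}(\Omega C, (\Omega C)^e)$ using the free resolution $F_{C^e}(E) \simeq F_{C^e}(C) \simeq \Omega C$, applies the free--forgetful adjunction to get $\uHom^{\tau^e}(E,(\Omega C)^e)$, and then invokes finite-rank duality (with the same twist-matching) to identify this with $F_{C^e}(E^*)$ --- exactly your chain of identifications traversed in the opposite direction. The formal deductions of $G_{C^e}((\Omega C)^!)\simeq E^*$ and $E^*\simeq C^\vee$ via the Koszul equivalence and Lemma \ref{lem:omegacshriek} likewise coincide with the paper's.
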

The lemma applies in particular if $C$ itself has finite rank over $k$.
This result also explains why we want to think of $C$ as proper rather than smooth: it is an object weakly equivalent to its $k$-linear dual.

\begin{proof}
Write $A = \Omega C$. As $G$ preserves weak equivalences of $C$-bicomodules by Theorem \ref{thm:modulekoszul}, we have $$A^e\boxempty_{C_0^e}^\tau E \simeq F(E) \simeq F(C) \simeq A,$$
where the last weak equivalence follows from Lemma \ref{lem:aec}. As $A^e\boxempty_{C_0^e}^\tau E$
 is $A^e$-free we can compute \[A^! = \RHom_{A^e}(A, A^e) \simeq \uHom_{A^e}(E\boxempty_{C_0^e}^\tau A^e, A^e) \simeq \uHom_{A^0_e}^\pi(E, A^e)\] using the free-forgetful adjunction (for $A^e$-modules in $A_0^e$-comodules) and this is $A^e \boxempty_{A^e_0}^{\tau^e} E^*$ as $E$ is free of finite rank.
 Thus \[C^{\vee} \simeq G(A^!) \simeq C^e \boxempty^{\tau^e}_{A^e_0} A^e \boxempty^{\tau^e}_{A^e_0} E^* \simeq E^*,\] as desired. The second statement follows as $F \dashv G$ induces an equivalence of homotopy categories. The final statement is just the definition of $C^\vee$. 
\end{proof}

If $A$ is a smooth dg category there is an adjunction 
$$A^! \otimes_k^L -
\colon \De(k) \rightleftarrows \De(A^e) \colon  A \otimes_{A^e}^L -,$$
see \cite[Example 2.10]{brav2019relative}. 
This can be seen at the level of dg categories by using the tensor-hom adjunction for the $(k,A^e)$-bimodule $A^!$ and using $\RHom_{A^e}(A^!, -) \simeq A \otimes^L_{A^e}- $ by compactness of $A$ in $\De(A^e)$. We translate this adjunction across Koszul duality to obtain the following.

\begin{lemma}\label{lem:cptadjunction}
For any proper pointed curved coalgebra $C$, we have an adjunction between homotopy categories
$$C^\vee  \otimes_k -
\colon \HDco(k) \rightleftarrows \HDco(C^e) \colon C \boxempty_{C^e}^L - .$$
\end{lemma}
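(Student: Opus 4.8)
The plan is to transport the smooth-dg-category adjunction across the module–comodule Koszul duality of Theorem \ref{thm:modulekoszul}. Write $A \coloneqq \Omega C$. Since $C$ is proper, $A$ is smooth by Proposition \ref{prop:smoothproper}, so we have at our disposal the adjunction $A^! \otimes_k^L - \dashv A \otimes_{A^e}^L -$ between $\De(k)$ and $\De(A^e)$ of \cite[Example 2.10]{brav2019relative}, recalled just above. I would combine this with four facts already established: the Koszul equivalence $\HDco(C^e) \simeq \HDe((\Omega C)^e) = \HDe(A^e)$ furnished by Theorem \ref{thm:modulekoszul} together with Lemma \ref{lem:omegaop} (as in the proof of Proposition \ref{prop:smoothproper}); the equivalence $\HDco(k) \simeq \De(k)$, which holds since $\Omega k = k$ so that $F_k$ is essentially the identity; and the identifications $F_{C^e}(C^\vee) \simeq A^!$ (Lemma \ref{lem:omegacshriek}) and $F_{C^e}(C) \simeq A$ (Lemma \ref{lem:aec}).

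Concretely, for $V \in \HDco(k)$ and $N \in \HDco(C^e)$ I would assemble the natural chain of isomorphisms
\begin{align*}
\Hom_{\HDco(C^e)}(C^\vee \otimes_k V, N)
&\cong \Hom_{\HDe(A^e)}(F_{C^e}(C^\vee \otimes_k V), F_{C^e}N) \\
&\cong \Hom_{\HDe(A^e)}(A^! \otimes_k^L V, F_{C^e}N) \\
&\cong \Hom_{\De(k)}(V, A \otimes_{A^e}^L F_{C^e}N) \\
&\cong \Hom_{\HDco(k)}(V, C \boxempty_{C^e}^L N).
\end{align*}
The first isomorphism is full faithfulness of the Koszul equivalence $F_{C^e}$. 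The second uses that $F_{C^e}$ commutes with tensoring the coefficients over $k$, giving $F_{C^e}(C^\vee \otimes_k V) \simeq F_{C^e}(C^\vee) \otimes_k^L V \simeq A^! \otimes_k^L V$ by Lemma \ref{lem:omegacshriek} (passing to a $k$-flat representative of $V$, harmless in the homotopy category). The third is the Brav--Dyckerhoff adjunction, and the fourth combines $\HDco(k) \simeq \De(k)$ with the identification $A \otimes_{A^e}^L F_{C^e}N \simeq C \boxempty_{C^e}^L N$; naturality of each step then yields the claimed adjunction. Equivalently, one can phrase this as transport of adjunction: having matched the two left adjoints under the equivalences, the right adjoint of $C^\vee \otimes_k -$ is forced to be the transport of $A \otimes_{A^e}^L -$, which the fourth isomorphism identifies as $C \boxempty_{C^e}^L -$.

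The main obstacle is the fourth isomorphism, namely that Koszul duality intertwines the derived cotensor $C \boxempty_{C^e}^L -$ with the derived tensor $A \otimes_{A^e}^L F_{C^e}(-)$. This is the coefficient-carrying generalization of Proposition \ref{prop:cohhishh} and Remark \ref{rem:hochschild}, which treat the case $N = C$ (yielding $C \boxempty_{C^e}^L C \simeq \coHH_*(C) \simeq \HH_*(\Omega C) \simeq A \otimes_{A^e}^L A$), and I would prove it by the same method: model $C \boxempty_{C^e}^L N$ using the replacement $C \simeq C^e \boxempty_{C_0^e}^{\tau^e} \Omega(C^e) = G_{C^e}(A^e)$ from Lemmas \ref{lem:aec} and \ref{lem:omegaop}, and compare the resulting two-sided twisted complex with the bar resolution computing $A \otimes_{A^e}^L F_{C^e}N$, exactly as in the proof of Lemma \ref{lem:aec}. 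A secondary point needing care, especially over a PID rather than a field, is the compatibility of $-\otimes_k V$ with the cotensor product defining $F_{C^e}$ (used in the second isomorphism) and the passage between the homotopy-category statement and the underlying equivalence of $\infty$-categories; both are controlled by restricting to $k$-flat representatives and invoking the invariance of $\RHom$ and $F_{C^e}$ under weak equivalences recorded in Corollary \ref{cor:koszulemoduleenriched}.
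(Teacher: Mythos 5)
Your proposal is correct and follows essentially the same route as the paper: both transport the Brav--Dyckerhoff adjunction for $A = \Omega C$ across the Koszul equivalence $\Dco(C^e) \simeq \De(A^e)$ of Theorem \ref{thm:modulekoszul} and Lemma \ref{lem:omegaop}, using Lemma \ref{lem:omegacshriek} to match $C^\vee$ with $A^!$ and the two-sided twisted complex (as in Lemma \ref{lem:aec}) to identify $A \otimes^L_{A^e} F_{C^e}(-)$ with $C \boxempty^L_{C^e} -$. The only cosmetic difference is the direction of transport: you apply $F_{C^e}$ to both arguments and use that $F$ commutes with $-\otimes_k V$, whereas the paper applies $G$ to both arguments (using $GF \simeq \mathrm{id}$) and the corresponding fact $G(A^! \otimes M) \simeq G(A^!) \otimes M$.
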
 
This is induced by a Quillen adjunction when $k$ is a field, otherwise we have to reinterpret $C\boxempty^L_{C^e} -$ as $(C^e\boxempty^{\tau^e}_{C^e_0} \Omega C)  \boxempty_{C^e} -$, which preserves weak equivalences and thus descends to localizations. Note that $\HDco(k)$ is just equivalent to $\HDe(k)$.
\begin{proof}
    We can deduce this from the adjunction for algebras, which provides a natural isomorphism
    \begin{align}\label{eq:algadj}
        \Hom_{\HDe((\Omega C)^e)} ((\Omega C)^! \otimes_k^L M, N) &\cong \Hom_{\HDe(k)}(M, \Omega C \otimes_{
    (\Omega C)^e}^L N).
    \end{align}
We apply this when $N$ is the Koszul image $(\Omega C)^e \cotensorte N'$ of an arbitrary $C^e$-comodule $N'$. Then we have
   the natural weak equivalence
    $$(C^e\boxempty^{\tau^e}_{C^e_0} \Omega C)  \boxempty_{C^e} N' \xrightarrow{\sim} \Omega C \boxempty^\pi_{C^e_0} M 
    \xrightarrow{\sim} \Omega C \otimes_{(\Omega C)^e}((\Omega C)^e \cotensorte N'),$$
    which identifies the right hand side of Equation \ref{eq:algadj} with
    $\Hom_{\HDe(k)}(M, C\boxempty_{C^e}^L N')$.

    It remains to identify the left hand side of Equation \ref{eq:algadj} with $\Hom_{\HDe(k)}(C^\vee \otimes M, N')$.
    We rewrite $\Hom_{\HDe((\Omega C)^e)} ((\Omega C)^! \otimes_k^L M, N)$ as 
    \begin{align*}
        \Hom_{\HDe((\Omega C)^e)} ((\Omega C)^! \otimes_k^L M, FN') 
        &\cong \Hom_{\HDco(C^e)} (G((\Omega C)^! \otimes_k^L M), GFN')\\
    &\cong \Hom_{\HDco(C^e)} (G((\Omega C)^! \otimes_k^L M), N'),
       \end{align*}
       using the Koszul adjunction and equivalence.
    Then the source is $G((\Omega C)^! \otimes M) \simeq G((\Omega C)^!) \otimes M \simeq C^\vee \otimes M$, by Lemma \ref{lem:omegacshriek} and the lemma is proven.
\end{proof}

\subsection{Proper Calabi-Yau structures on coalgebras}\label{sect:propercy}
We now define a notion of proper Calabi-Yau structure on a proper pointed curved coalgebra through the coHochschild complex.

\begin{lemma}\label{lem:hochschildmap}
Let $C$ be proper pointed curved coalgebra.
Then a cycle $\beta\in \coHHcx_n(C)$ induces a natural $C$-bicomodule map $\phi(\beta): C^\vee \to C[n]$ in $\Dco(C^e)$. Conversely, any map $C^\vee \to C[n]$ in $\Dco(C^e)$ gives rise to a cycle in $\coHHcx_n(C)$.
\end{lemma}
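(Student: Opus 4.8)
The plan is to transport the whole statement across Koszul duality to the smooth dg category $A = \Omega C$, where the analogous identification is exactly the one built into the definition of a weak smooth Calabi-Yau structure, and then read it back along the module--comodule equivalence. Concretely, I would assemble the chain of natural isomorphisms
\[
\Hom_{\HDco(C^e)}(C^\vee, C[n]) \;\cong\; \Hom_{\HDe((\Omega C)^e)}\big((\Omega C)^!, \Omega C[n]\big) \;\cong\; \HH_n(\Omega C) \;\cong\; \coHH_n(C),
\]
and define $\phi(\beta)$ to be the morphism corresponding to the class $[\beta]\in\coHH_n(C)$ under this composite. Since every arrow is natural, $\phi$ is natural, and the two directions of the lemma are just the well-definedness and the surjectivity packaged in an isomorphism.

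First I would produce the leftmost isomorphism. By Proposition \ref{prop:smoothproper} the algebra $A=\Omega C$ is smooth, so $(\Omega C)^!$ is well-behaved; by Theorem \ref{thm:modulekoszul} together with Lemma \ref{lem:omegaop} the functor $F = F_{C^e}$ induces an equivalence $\Dco(C^e)\simeq\De((\Omega C)^e)$, hence bijections on Hom-sets of the homotopy categories (Corollary \ref{cor:koszulemoduleenriched}). Under this equivalence Lemma \ref{lem:aec} identifies the bicomodule $C$ with $\Omega C$ and Lemma \ref{lem:omegacshriek} identifies $C^\vee$ with $(\Omega C)^!$; since $F$ preserves shifts this gives the first isomorphism. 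Routing everything through $F$ here, rather than computing directly in $\HDco(C^e)$ via Corollary \ref{cor:dcohoms}, is what lets me avoid any $k$-freeness hypothesis on $C^\vee$, whose underlying module need not be free: on the module side the Hom-sets are those of a genuine model structure. The middle isomorphism is precisely the identification $\HH_n(A)\cong\Hom_{\De(A^e)}(A^!,A[n])$ recalled in the definition of a weak smooth Calabi-Yau structure, and the rightmost one is Proposition \ref{prop:cohhishh}, which identifies $\coHH_*(C)$ with $\HH_*(\Omega C)$ as homologies of quasi-isomorphic mixed complexes.

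The step I expect to be the main obstacle is not any single isomorphism, each of which is cited, but verifying that the three compose coherently on the nose---in particular that the Koszul identification of $C^\vee$ with $(\Omega C)^!$ and the comparison $\coHH_*(C)\cong\HH_*(\Omega C)$ are induced by compatible twisted cotensor models, so that a representative coHochschild cycle really is sent to the morphism one expects on the algebra side. I would check this by tracking a cycle through the explicit model $C\boxempty^\pi_{C^e_0}\Omega C$ of Definition \ref{def:cohh}, using Remark \ref{rem:hochschild} to recognise it as computing maps against the derived dual, rather than arguing purely abstractly.
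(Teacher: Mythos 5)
Your proposal is correct, and it is in fact the route the paper itself records \emph{immediately after} its own proof as ``an indirect way of showing the lemma'': the composite $\coHH_n(C) \cong \HH_n(\Omega C) \cong \Hom_{\HDe(A^e)}(A^!, A[n]) \cong \Hom_{\HDco(C^e)}(C^\vee, C[n])$, with exactly your three ingredients (Proposition \ref{prop:cohhishh}, the standard smooth-algebra identification, and Theorem \ref{thm:modulekoszul} plus Lemmas \ref{lem:aec} and \ref{lem:omegacshriek}). The paper's primary proof, however, is the \emph{direct} one: it constructs $\phi$ as an explicit composition of quasi-isomorphisms of complexes, $\coHHcx_*(C) \simeq \RHom(k, C \boxempty^\pi_{C^e_0} \Omega C) \simeq \cdots \simeq \RHom_{C^e}(C^\vee, C)$, using the dg-enhanced adjunction of Lemma \ref{lem:cptadjunction} via Corollary \ref{cor:koszulemoduleenriched} (which is where $k$-freeness of $C^\vee$ enters). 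The trade-off is as follows: your route is shorter, works purely at the level of homotopy categories, and, as you note, leans only on the $\infty$-categorical equivalence rather than the enriched hom complexes (though your parenthetical citation of Corollary \ref{cor:koszulemoduleenriched} for the hom-set bijection is slightly off --- Theorem \ref{thm:modulekoszul} alone gives that, and is what you actually need to keep the freeness-avoidance claim honest). What the paper's direct construction buys is a chain-level factorization of $\phi$ that is reused later: the right-hand column of the commuting diagram in the proof of Theorem \ref{thm:main} is precisely this factorization, and Remark \ref{rem:cycdual} refines it further. Relatedly, your worry about the three isomorphisms composing coherently is immaterial for the lemma itself (any natural composite defines $\phi$, and the converse direction is just invertibility), but it is exactly the issue that matters for Theorem \ref{thm:main}; there the paper resolves it not by tracking cycles through $C\boxempty^\pi_{C^e_0}\Omega C$ as you suggest, but by exhibiting the direct and Koszul-dual constructions in one commuting diagram, which is arguably the cleaner way to discharge the compatibility you flagged.
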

\begin{proof}
This follows from Lemma \ref{lem:cptadjunction} and Lemma \ref{lem:aec}.
We define $\phi$ as the composition 
\begin{align*}
    \coHHcx_*(C) &\simeq \RHom(k, C \boxempty^\pi_{C^e_0} \Omega C) 
    \simeq \RHom(k, C \boxempty_{C^e}(C^e \boxempty^{\tau^e}_{C^e_0} \Omega C)) \\
    & \stackrel {(*)} \simeq \RHom_{C^e}(C^\vee, C^e \cotensorte \Omega C) \simeq \RHom_{C^e}(C^\vee, C).
\end{align*}
Here for $(*)$ we use a dg-enhancement of the adjunction, which follows with the same computation as Lemma \ref{lem:cptadjunction} as $C^\vee$ is $k$-free and we may use Corollary \ref{cor:koszulemoduleenriched}. Also by Corollary \ref{cor:koszulemoduleenriched}
 the element in $\RHom_{C^e}(C^\vee, C)$ defines a map in $\Dco(C^e)$.
\end{proof}
There is of course also an indirect way of showing the lemma, by considering the composition of isomorphisms
\[\coHH_n(C) \cong \HH_n(\Omega C) \cong \Hom_{\HDe(A^e)}(A^!, A[n]) \cong \Hom_{\HDco(C^e)}(C^\vee, C[n]),\]
where the first map is Proposition \ref{prop:cohhishh}, the middle map is a standard isomorphism and the last map Theorem \ref{thm:modulekoszul} together with Lemma \ref{lem:aec}.
    
\begin{remark}\label{rem:cycdual}
    Let $C$ be a proper pointed curved coalgebra of finite rank and $\beta \in \coHHcx_n(C)$.
    Then we may describe the induced map $\phi(\beta): C^* \to C[n]$ in $\HDco(C^e)$ explicitly as follows. We have isomorphisms of complexes
$$\coHHcx_*(C) \cong \uHom^\pi(C^*, A) \cong  \uHom_{C^e}(C^*, C^e \boxempty_{C^e_0}^\tau A),$$ 
where the first map is induced by the natural map $C\boxempty_{C_0} \Omega C \to \uHom_{C_0}(C^*, \Omega C)$ on underlying graded objects, using that $C$ is of finite rank.
The second map is obtained from the cofree adjunction for $C^e$-comodules
and explicit comparison of the differential, which is induced by the left and right action of the twisting cochain in both cases. Thus the natural map from cycles in $\uHom_{C^e}(C^*, C^e \boxempty_{C^e_0}^\tau A)$ to $\Hom_{\HDco(C^e)}(C^*, C^e \otimes A) \cong \Hom_{\Dco(C^e)}(C^*, C)$ associates to any  cycle in $\coHHcx_*(C)$ a map $C^* \to C$ in $\HDco(C^e)$.
\end{remark}

\begin{definition}\label{def:cyproper}
    Let $C$ be proper pointed curved coalgebra. A cycle $\beta \in \coHHcx_n(C)$ is called \emph{non-degenerate} if it induces a weak equivalence $C^\vee \simeq C[n]$ in $\Dco(C^e)$.

    A non-degenerate cycle $\beta \in \coHHcx_n(C)$ is called a  \emph{weak proper $n$-Calabi-Yau structure}.

     A \emph{proper $n$-Calabi-Yau structure} on $C$ is a cycle $\nu \in \coHNcx_n(C)$ in the negative cyclic complex whose image in $\coHHcx_n(C)$ is a weak proper $n$-Calabi-Yau structure. 
\end{definition}
Our main statement is the following.
\begin{theorem}\label{thm:main}
A proper curved pointed coalgebra $C$ has a (weak) proper $n$-Calabi-Yau structure if and only if $\Omega C$ has a (weak) smooth $n$-Calabi-Yau structure.

Conversely a smooth dg category $A$ has a (weak) smooth $n$-Calabi-Yau structure if and only if $BA$ has a (weak) proper $n$-Calabi-Yau structure. 
\end{theorem}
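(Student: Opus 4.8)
The plan is to reduce the statement to the identifications already assembled in the previous subsections, namely the mixed-complex quasi-isomorphism of Proposition \ref{prop:cohhishh} together with the Koszul equivalence of bimodule categories, and then to match the two non-degeneracy conditions. First, since $C$ is proper, the dg category $A \coloneqq \Omega C$ is smooth by Proposition \ref{prop:smoothproper}, so a smooth Calabi-Yau structure on $A$ makes sense. By Lemma \ref{lem:hochschildmap} a cycle $\beta \in \coHHcx_n(C)$ produces a map $\phi(\beta)\colon C^\vee \to C[n]$ in $\Dco(C^e)$, and conversely every such map arises from a cycle; thus $\beta$ is a weak proper Calabi-Yau structure exactly when $\phi(\beta)$ is a weak equivalence. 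On the algebra side, a cycle $\alpha \in \HHcx_n(A)$ is a weak smooth Calabi-Yau structure exactly when the induced map $A^! \to A[n]$ is a weak equivalence in $\De(A^e)$.

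The key step is to show these two conditions match under Koszul duality. By Theorem \ref{thm:modulekoszul} together with Lemma \ref{lem:omegaop} there is an equivalence $\Dco(C^e) \simeq \De(A^e)$, and by Lemmas \ref{lem:omegacshriek} and \ref{lem:aec} this equivalence carries $C^\vee$ to $A^!$ and $C$ to $A$. Since an equivalence of homotopy categories preserves and reflects isomorphisms, $\phi(\beta)$ is a weak equivalence if and only if its image is. It then remains to verify that the image of $\phi(\beta)$ is precisely the map induced by the Koszul-dual class $\alpha \in \HHcx_n(A)$ supplied by Proposition \ref{prop:cohhishh}, that is, that the square relating the two ``Hochschild-cycle-to-bimodule-map'' constructions commutes. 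This is exactly the content of the indirect description following Lemma \ref{lem:hochschildmap}; granting it, together with the bijection on homology classes $\coHH_n(C) \cong \HH_n(A)$, the weak statement follows in both directions.

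Next I would lift to the cyclic level. Because the quasi-isomorphism of Proposition \ref{prop:cohhishh} is one of mixed complexes, it induces a quasi-isomorphism $\coHNcx_*(C) \simeq \HNcx_*(A)$ commuting with the projections to $\coHHcx_*$ and $\HHcx_*$. Hence a negative cocyclic lift of $\beta$ corresponds to a negative cyclic lift of $\alpha$, upgrading the equivalence of weak structures to an equivalence of genuine proper and smooth Calabi-Yau structures. For the converse statement, given a smooth dg category $A$, Proposition \ref{prop:smoothproper} shows $\Ba A$ is proper, so the first part applies to $C = \Ba A$: thus $\Ba A$ has a (weak) proper Calabi-Yau structure if and only if $\Omega \Ba A$ has a (weak) smooth one. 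The counit $\Omega \Ba A \to A$ is a quasi-equivalence by Theorem \ref{thm:algebrakoszul}, and (weak) smooth Calabi-Yau structures are invariant under quasi-equivalence, being defined through Hochschild and negative cyclic homology and the derived category of bimodules, all of which are preserved; hence $A$ has a (weak) smooth Calabi-Yau structure if and only if $\Ba A$ has a (weak) proper one.

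The hard part will be the compatibility asserted in the second paragraph: that the bimodule map attached to a coHochschild cycle by the coalgebra-side construction of Lemma \ref{lem:hochschildmap} agrees, under the equivalence $\Dco(C^e)\simeq\De(A^e)$, with the map attached to the corresponding Hochschild cycle of $A$. Making this precise means tracking the dg-enhanced adjunction of Corollary \ref{cor:koszulemoduleenriched} and the identifications $C^\vee \simeq G((\Omega C)^!)$ and $F_{C^e}C \simeq \Omega C$ through the defining composite of $\phi$; the construction of $\phi$ was arranged precisely so that this diagram commutes, but verifying the comparison of differentials and of the relevant twisting cochains is where the genuine work lies.
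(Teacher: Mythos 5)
You follow the same strategy as the paper: identify $\coHHcx_*(C)$ with $\HHcx_*(\Omega C)$ via Proposition \ref{prop:cohhishh}, transport non-degeneracy through the equivalence $\Dco(C^e)\simeq\De((\Omega C)^e)$ using Lemmas \ref{lem:aec} and \ref{lem:omegacshriek}, lift to the negative (co)cyclic level via the mixed-complex structure, and obtain the converse from invariance of smooth Calabi-Yau structures under the quasi-equivalence $\Omega\Ba A\to A$. Those parts are assembled correctly and match the paper's argument.

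The problem is the step you ``grant'': that under the Koszul equivalence the map $\phi(\beta)$ of Lemma \ref{lem:hochschildmap} is identified with the map $A^!\to A[n]$ induced by the corresponding Hochschild class $\alpha$. This is not a side verification one may defer --- it is the entire content of the paper's proof, and without it nothing transfers: a non-degenerate $\beta$ would only yield the existence of \emph{some} isomorphism $A^!\simeq A[n]$ in $\HDe(A^e)$, not that the specific map attached to $\alpha$ is one, which is what a weak smooth Calabi-Yau structure requires. It is also not ``the content of the indirect description following Lemma \ref{lem:hochschildmap}'': that remark merely re-derives the existence of a correspondence $\coHH_n(C)\cong\Hom_{\HDco(C^e)}(C^\vee,C[n])$ as a composite of isomorphisms; it does not show that this composite agrees with $\phi$, in terms of which Definition \ref{def:cyproper} is stated. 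The paper proves the compatibility by factoring both constructions in parallel through one diagram: on the algebra side $\HHcx_*(A)\to\uHom(k, A\otimes_{A^e}(A^e\cotensorte C))\to\RHom_{A^e}(A^!,A^e\cotensorte C)\to\RHom_{A^e}(A^!,A)$, on the coalgebra side the analogous column factoring $\phi$, with the bottom rectangle joined by $RG$ commuting because the two compact-object adjunctions correspond under the enriched Koszul duality of Corollary \ref{cor:koszulemoduleenriched} (using that $C^\vee$ is $k$-free), and the top rectangle commuting by naturality of Proposition \ref{prop:cohhishh}. Writing down this factorization and checking those two rectangles is exactly the ``genuine work'' you point to; a complete proof has to contain it rather than presuppose it.
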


\begin{proof}
We will consider the following diagram of chain complexes with $A = \Omega C$.
    $$\begin{tikzcd}
  \HHcx_*(A) \arrow[r, "\simeq"] \arrow[d, "\simeq"]
    & \coHHcx_*(C) \arrow[d, "\simeq"]    \arrow[bend left = 90]{ddd}{\phi}
    \\
    \uHom(k, A \otimes_{A^e}  (A^e \cotensorte C)) \arrow[d, "\simeq"]  \arrow[r, "\simeq"]
    & \uHom(k, C \boxempty_{C^e} (C^e \cotensorte A)) \arrow[d, "\simeq"] \\
     \RHom_{A^e}(A^!, A^e \cotensorte C) \arrow[d, "\simeq"] 
     & \RHom_{C^e}(C^\vee, C^e \cotensorte A) \arrow[d, "\simeq"] 
     \\
     \RHom_{A^e}(A^!, A)  \arrow{r}{RG_{C^e}}[swap]{\simeq}
     & \RHom_{C^e}(C^\vee, C)
\end{tikzcd}$$
The top horizontal map is a quasi-isomorphism by Proposition \ref{prop:cohhishh}.
The vertical maps are a factorization of the map $\phi$ from Lemma \ref{lem:hochschildmap}.
The left vertical quasi-isomorphisms are the analogous computations for algebras, where we use that we may write $\HHcx_*(A)$ as $A \otimes^\pi C$ as well as $C \otimes^\pi A$.
The functor $G_{A^e}$ is the right adjoint in the Koszul duality adjunction and we use Lemma \ref{lem:omegacshriek} and Corollary \ref{cor:koszulemoduleenriched} ($C^\vee$ is free over $k$). The bottom rectangle commutes as the adjunctions correspond under Koszul duality.

As Koszul duality for modules and comodules induces an equivalence of homotopy categories invertible maps in the bottom row are identified.
Thus a cycle in  $\coHHcx_*(C)$ gives rise to weak proper Calabi-Yau structure on $C$ if its image in $\HHcx_*(A)$ gives rise to a weak smooth Calabi-Yau structure on $A$. 

This shows the result for weak Calabi-Yau structures. 
For Calabi-Yau structures we just extend the diagram at the top by the quasi-isomorphism $\coHNcx_*(C)
\simeq \HNcx_*(\Omega C)$.

 For the converse, note that $A$ has a smooth $n$-Calabi-Yau structure if and only if $\Omega \Ba A$ does, which by the first statement is the case if and only if $\Ba A$ has a proper $n$-Calabi-Yau structure.
\end{proof}
The proof of the theorem immediately gives the following corollary.
\begin{corollary}
    If we consider Calabi-Yau structures up to equivalence, i.e.\ their classes in $\coHN_n(C)$ and $\HN_n(\Omega C)$, then there is a bijective correspondence of proper $n$-Calabi-Yau structures on $C$ and smooth $n$-Calabi-Yau structures on $\Omega C$.
\end{corollary}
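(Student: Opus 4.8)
The plan is to leverage the commuting diagram from the proof of Theorem~\ref{thm:main} together with the mixed-complex quasi-isomorphism of Proposition~\ref{prop:cohhishh}, and to observe that non-degeneracy is a property of a homology class rather than of a chosen representative cycle. Writing $A = \Omega C$, I would first recall that by Definition~\ref{def:cyproper} a proper $n$-Calabi-Yau structure on $C$, considered up to equivalence, is exactly a class in $\coHN_n(C)$ whose image under the reduction map $\coHN_n(C) \to \coHH_n(C)$ (setting $u = 0$) is non-degenerate, and likewise a smooth $n$-Calabi-Yau structure on $A$ is a class in $\HN_n(A)$ reducing to a non-degenerate class in $\HH_n(A)$.

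First I would use that the comparison of Proposition~\ref{prop:cohhishh} is a quasi-isomorphism of \emph{mixed} complexes. Passing to negative cyclic and Hochschild homology, it therefore yields a commuting square
\[
\begin{tikzcd}
\coHN_n(C) \arrow[r, "\cong"] \arrow[d] & \HN_n(A) \arrow[d] \\
\coHH_n(C) \arrow[r, "\cong"] & \HH_n(A)
\end{tikzcd}
\]
in which the horizontal maps are isomorphisms and the vertical maps are the respective reductions. Next I would invoke the bottom row of the diagram in the proof of Theorem~\ref{thm:main}: the equivalence $RG_{C^e}$ identifies $\RHom_{A^e}(A^!, A)$ with $\RHom_{C^e}(C^\vee, C)$, and the map $\phi$ factors through $\coHH_n(C)$, so the induced morphism in $\HDco(C^e)$ depends only on the Hochschild class. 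Consequently, under the lower isomorphism $\coHH_n(C) \cong \HH_n(A)$ a class is non-degenerate (its image under $\phi$ is an isomorphism in $\Dco(C^e)$) if and only if its image in $\HH_n(A)$ is non-degenerate.

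Combining these, a class $[\nu] \in \coHN_n(C)$ is a proper $n$-Calabi-Yau structure precisely when its reduction in $\coHH_n(C)$ is non-degenerate, which by the commuting square and the previous step happens precisely when the reduction of the corresponding class in $\HN_n(A)$ is non-degenerate, i.e.\ when that class is a smooth $n$-Calabi-Yau structure. Thus the isomorphism $\coHN_n(C) \cong \HN_n(A)$ restricts to the asserted bijection.

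I expect the only genuine point requiring care --- and hence the main obstacle --- to be the compatibility of the comparison isomorphism with the reduction maps to Hochschild homology, that is, that the comparison really is a morphism of mixed complexes so that the square above commutes; once this is granted the statement is formal. A secondary check is that non-degeneracy descends to homology classes, but this is already built into the definition of $\phi$ via the derived functors $\RHom_{C^e}$ and $\RHom_{A^e}$.
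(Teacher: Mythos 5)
Your proof is correct and follows essentially the same route as the paper: the corollary is stated there as an immediate consequence of the proof of Theorem \ref{thm:main}, whose diagram (extended at the top by the mixed-complex quasi-isomorphism $\coHNcx_*(C) \simeq \HNcx_*(\Omega C)$ of Proposition \ref{prop:cohhishh}) is exactly the commuting square plus non-degeneracy identification you spell out. Your explicit observation that non-degeneracy depends only on the homology class, since $\phi$ lands in morphisms of the (co)derived category, is the same point the paper leaves implicit.
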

\subsection{Smooth Calabi-Yau structures on coalgebras}\label{sect:smoothcy}
We now consider the dual case.
To consider the Koszul dual notion to a locally proper dg category we observe that $A$ is locally proper if $A \otimes^L_{A^e} -: \HDe(A^e)\to \HDe(k)$ has a right adjoint of the form $A^* \otimes^L_k -$ \cite[Section 2.2]{brav2019relative}. 
Indeed, the adjunction on homotopy categories suffices to show that $A^*\otimes -$ is naturally weakly equivalent to $\RHom(A,-)$ and $A$ is locally proper.

\begin{definition}\label{def:cysmooth}
    We say $C \in \ptdco$ is \emph{weakly smooth} if 
    $$C \boxempty^L_{C^e} -: \HDco(C^e) \to \HDco(k)$$ has a right adjoint of the form $N \mapsto C^! \otimes N$ for some $C$-bicomodule $C^!$. 

    Here again if $k$ is not a field then we have to replace $C \boxempty^L_{C^e} -$ by the explicit cotensor product $(C^e \boxempty^{\tau^e}_{C^e_0}\Omega C) \boxempty_{C^e} -$ which preserves weak equivalences and therefore induces a map of coderived categories.
\end{definition}

\begin{remark}\label{rmk:weaklysmooth}
    We will see below that the notion defined in \ref{def:cysmooth} is Koszul dual to the notion of (local) properness, thus by analogy with the dual situation we called this property smoothness in the first version of this paper. However, note that this terminology is in conflict with the notion of a \textit{smooth pseudocompact algebra}, \cite{van2015calabi, herscovich2023cyclic}, defined as a pseudocompact algebra $A$ such that $A$ is perfect as a $A^e$-module, that is, it is contained in the triangulated subcategory generated by $A^e$.   
    Thus, it is reasonable to call a coalgebra smooth if its dual pseudocompact algebra is smooth, as in \cite{booth2025nonsmoothcalabiyaustructuresalgebras}. 
    In contrast to the situation for algebras, being perfect as a $C^e$-comodule is not equivalent to having a right adjoint as a functor from $C^e$-modules to $k$-modules.

    To see that being weakly smooth is indeed weaker than being smooth we may check it for the Koszul dual notion: being in the triangulated subcategory generated by $C^e$ corresponds to being in the triangulated subcategory of $\Omega C^e$-modules generated by $k$, which is a stronger condition than being locally proper. See also \cite[Section 5.4]{booth2025nonsmoothcalabiyaustructuresalgebras}.
\end{remark}
\begin{proposition}\label{prop:cshriek}
    $C \in \ptdco$ is {weakly smooth} if and only if $\Omega C$ is a locally proper dg category.
    Moreover, we have $$C^! \simeq C^e\boxempty_{C_0^e}^{\tau^e} A^* \simeq G(A^*).$$ 
\end{proposition}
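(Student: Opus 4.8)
The plan is to transport the corresponding statement for algebras across Koszul duality, exactly mirroring the proof of Proposition \ref{prop:smoothproper} for the proper/smooth dictionary. The key input on the algebra side is the characterization recalled just before Definition \ref{def:cysmooth}: a dg category $A$ is locally proper if and only if the functor $A \otimes^L_{A^e} - \colon \HDe(A^e) \to \HDe(k)$ admits a right adjoint of the form $A^* \otimes^L_k -$. So the first half of the proof is to show that the defining adjunction for weak smoothness of $C$ is equivalent, under module-comodule Koszul duality, to this adjunction for $A = \Omega C$.

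To do this, I would use the equivalence of (co)derived categories $\Dco(C^e) \simeq \De((\Omega C)^e)$ from Theorem \ref{thm:modulekoszul} (combined with Lemma \ref{lem:omegaop} to identify $(\Omega C)^e$ with $\Omega(C^e)$) and the equivalence $\HDco(k) \simeq \HDe(k)$. Under these equivalences the functor $C \boxempty^L_{C^e} -$ (or its PID-friendly replacement $(C^e \boxempty^{\tau^e}_{C^e_0} \Omega C) \boxempty_{C^e} -$) must be matched with $A \otimes^L_{A^e} -$. The computation needed here is essentially contained in Lemma \ref{lem:aec}, which identifies $F_{C^e} C \simeq \Omega C = A$ as an $A^e$-bimodule; tracing this identity through the equivalence shows that cotensoring with $C$ over $C^e$ corresponds to tensoring with $A$ over $A^e$. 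Since equivalences of categories preserve the existence of adjoints and identify them up to natural isomorphism, $C \boxempty^L_{C^e} -$ has a right adjoint if and only if $A \otimes^L_{A^e} -$ does, i.e.\ if and only if $A$ is locally proper. This gives the biconditional.

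For the explicit formula $C^! \simeq C^e \boxempty_{C_0^e}^{\tau^e} A^* \simeq G(A^*)$, I would observe that whenever the right adjoint on the algebra side exists it is given by $A^* \otimes^L_k -$. Transporting this right adjoint back across Koszul duality, the coefficient bicomodule $C^!$ is forced to be the Koszul image $G_{C^e}(A^*) = C^e \boxempty_{C_0^e}^{\tau^e} A^*$ of $A^*$, by uniqueness of adjoints and the fact that $G = C^e \boxempty^{\tau^e}_{C^e_0} -$ is the functor realizing the equivalence $\De(A^e) \to \Dco(C^e)$. One must also check that the functor $N \mapsto C^! \otimes N$ (tensoring over $k$) corresponds to $M \mapsto A^* \otimes^L_k M$; this follows because $G$ commutes with tensoring by a $k$-module up to weak equivalence, since $G(A^* \otimes M) \simeq G(A^*) \otimes M$ (the same type of computation used at the end of the proof of Lemma \ref{lem:cptadjunction}).

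The main obstacle I anticipate is the bookkeeping over a general PID, where $C \boxempty^L_{C^e} -$ is only defined via the explicit twisted cotensor model rather than as a derived functor with a genuine model-categorical adjoint. In that setting one cannot simply invoke a Quillen adjunction; instead one must argue at the level of homotopy categories that the matched functors preserve weak equivalences, descend to the localizations, and that the adjunction of homotopy categories transports correctly. Verifying that the right adjoint, which a priori only exists as an abstract functor on $\HDco(C^e)$, is \emph{naturally} of the form $C^! \otimes -$ requires the freeness hypotheses and Corollary \ref{cor:koszulemoduleenriched} to replace derived homs by the explicit $\RHom_C$ enrichment. The biconditional itself is formal once the functors are identified; it is this identification of the explicit coefficient bicomodule and the verification that $G$ intertwines $k$-linear tensoring on both sides that carries the real content.
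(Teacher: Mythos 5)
Your proposal is correct and follows the paper's own proof essentially verbatim: module--comodule Koszul duality (Theorem \ref{thm:modulekoszul}, with Lemma \ref{lem:omegaop}) plus Lemma \ref{lem:aec} identifies $C \boxempty^L_{C^e} -$ with $A \otimes^L_{A^e} -$, the right adjoints of the prescribed tensor form then correspond under the equivalence, and $C^! \simeq G(A^*) = C^e\boxempty^{\tau^e}_{C_0^e}A^*$; your explicit verification that $G(A^* \otimes M) \simeq G(A^*) \otimes M$ (as at the end of the proof of Lemma \ref{lem:cptadjunction}) is exactly the step the paper leaves implicit. One small caution: phrase the equivalence as existence of a right adjoint \emph{of the form} $A^* \otimes^L_k -$ (resp.\ $C^! \otimes -$), not bare existence of a right adjoint, since $A \otimes^L_{A^e} -$ always has the right adjoint $\RHom_k(A,-)$ and it is only its representability by tensoring that encodes local properness.
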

As local properness is invariant under quasi-equivalence, it follows that  $A$ is locally proper if and only if $\Ba A$ is weakly smooth. 
\begin{proof}
For simplicity, write $A=\Omega C$. 
Koszul duality 
together with Lemma \ref{lem:aec}
identifies $A\otimes_{A^e}^L- $ with $C \boxempty_{C^e}^L -$. Then the existence of the right adjoint in the two cases is equivalent
and we have 
$$A\otimes^L_{A^e} - \colon \HDe(A^e)\rightleftarrows \HDe(k)
\colon A^* \otimes_k^L -$$
if and only if
$$C\boxempty^L_{C^e} - \colon \HDco(C^e)\rightleftarrows \HDe(k)
\colon C^! \otimes_k^L -$$
where $C^!$ respectively $A^*$ is just the image of $k$ under the right adjoint.
The formula for $C^!$ follows as $G(A^*) = C^e \cotensorte A^*$.
    \end{proof}
 \begin{remark}\label{rmk:smoothpseudocompact}
     Note that $C^!$ is the predual of $\RHom_{C^e}(C^e, C)$
  as 
  $$\uHom(C^e\boxempty_{C^e_0}^\tau A^*, k) \simeq \uHom_{C^e_0}(C^e, A) = \uHom_{C^e}(C^e, C^e \boxempty^\tau_{C_0^e} A) \simeq \RHom_{C^e}(C^e, C).$$
It follows that $C^!$ is indeed the analogue of $A^! = R\uHom_{A^e}(A, A^e)$ in $C$-bicomodules.
 \end{remark}

\begin{lemma}\label{lem:smoothcymap}
    A cycle $\eta: \coHHcx_*(C) \to k[-n]$  or $\coHCcx_*(C) \to k[-n]$ for a weakly smooth pointed curved coalgebra $C$ induces a map $C[n]\to C^!$.
\end{lemma}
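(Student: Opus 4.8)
The plan is to mirror the construction of $\phi$ in Lemma \ref{lem:hochschildmap}, replacing the adjunction of Lemma \ref{lem:cptadjunction} by the weak-smoothness adjunction of Definition \ref{def:cysmooth} (as identified in Proposition \ref{prop:cshriek}). The starting point is the identification of the coHochschild complex with a derived cotensor product. By Remark \ref{rem:hochschild} we have
$$\coHHcx_*(C) = C \boxempty_{C^e}\big(C^e\boxempty_{C^e_0}^{\tau^e} \Omega C\big),$$
which over a field models $C \boxempty^L_{C^e} C$ and which in general is precisely the explicit functor $(C^e\boxempty_{C^e_0}^{\tau^e}\Omega C)\boxempty_{C^e}-$ applied to $C$, i.e.\ the object to which the weak-smoothness adjunction of Definition \ref{def:cysmooth} applies. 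Thus a cycle $\eta \colon \coHHcx_*(C) \to k[-n]$ is the same datum as a morphism $C \boxempty^L_{C^e} C \to k[-n]$ in $\HDco(k)$ (which is equivalent to $\HDe(k)$).

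First I would feed this morphism into the adjunction. Since $C$ is weakly smooth, Definition \ref{def:cysmooth} and Proposition \ref{prop:cshriek} give $C \boxempty^L_{C^e} - \dashv C^! \otimes_k -$ on homotopy categories, so that for $V \in \HDco(k)$ and $N \in \HDco(C^e)$ we have $\Hom_{\HDco(k)}(C \boxempty^L_{C^e} N, V) \cong \Hom_{\HDco(C^e)}(N, C^!\otimes_k V)$. Specializing to $N = C$ and $V = k[-n]$ yields
$$\Hom_{\HDco(k)}\big(C \boxempty^L_{C^e} C,\, k[-n]\big) \;\cong\; \Hom_{\HDco(C^e)}\big(C,\, C^!\otimes_k k[-n]\big) \;=\; \Hom_{\HDco(C^e)}\big(C, C^![-n]\big).$$
Hence $\eta$ corresponds to a morphism $C \to C^![-n]$ in $\HDco(C^e)$, equivalently a morphism $C[n] \to C^!$ of $C$-bicomodules, as desired. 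An equivalent indirect argument runs through Koszul duality: by Proposition \ref{prop:cohhishh} the cycle $\eta$ transports to a cycle $\HHcx_*(\Omega C)\to k[-n]$ in the dual Hochschild complex of the locally proper dg category $A=\Omega C$, which via the algebra identification $\uHom(\HHcx_*(A),k)\simeq \RHom_{A^e}(A,A^*)$ induces $A[n]\to A^*$; applying $G_{C^e}$ and invoking $C \simeq G_{C^e}(A)$ from Lemma \ref{lem:aec} together with $C^! \simeq G_{C^e}(A^*)$ from Proposition \ref{prop:cshriek} recovers $C[n]\to C^!$.

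For the cocyclic version I would reduce to the coHochschild case. There is a natural comparison map $j \colon \coHHcx_*(C) \to \coHCcx_*(C)$ given by including $\coHHcx_*(C)$ as the $u^0$-component of $(\coHHcx_*(C)[u^{-1}], b+uB)$; this is a chain map because $uB$ raises the power of $u$ and therefore vanishes on the top component, so the induced differential there is just $b$. Precomposing a cycle $\eta \colon \coHCcx_*(C) \to k[-n]$ with $j$ produces a cycle $\eta \circ j \colon \coHHcx_*(C) \to k[-n]$, to which the preceding paragraph applies verbatim.

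I expect the main obstacle to be bookkeeping rather than anything conceptual. One must verify that the explicit cotensor model of Remark \ref{rem:hochschild} is genuinely the object $C \boxempty^L_{C^e} C$ on which the adjunction of Definition \ref{def:cysmooth} acts — immediate over a field, but over a PID requiring the explicit replacement of $C\boxempty^L_{C^e}-$ flagged there — and one must track the shifts so that they compose to $C[n]\to C^!$ rather than a differently-shifted map. If one wants the assignment realized at the level of complexes, as was done for $\phi$ in Lemma \ref{lem:hochschildmap}, one would instead assemble the dg-enhanced identification $\uHom(\coHHcx_*(C), k) \simeq \RHom_{C^e}(C, C^!)$ using Corollary \ref{cor:koszulemoduleenriched} and the $k$-freeness of the relevant comodules, exactly as in the enhanced computation there.
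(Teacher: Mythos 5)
Your proposal is correct and follows essentially the same route as the paper: both identify $\coHHcx_*(C)$ with the explicit model $C \boxempty_{C^e} (C^e \boxempty^{\tau^e}_{C^e_0} \Omega C)$ of the derived cotensor product and then apply the weak-smoothness adjunction of Definition \ref{def:cysmooth} with $N = C$, $V = k[-n]$, using Lemma \ref{lem:aec} to replace $C^e \boxempty^{\tau^e}_{C^e_0}\Omega C$ by $C$. The only cosmetic difference is that the paper runs the argument directly on $\RHom$ complexes via the dg enhancement (justified through Proposition \ref{prop:cshriek} and Corollary \ref{cor:koszulemoduleenriched}), whereas you first work in homotopy categories and defer the enhancement to a closing remark; your explicit treatment of the cocyclic case by precomposing with the $u^0$-inclusion is exactly what the paper leaves implicit.
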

 \begin{proof}
     This follows by adjunction in Definition \ref{def:cysmooth} as we have 
     \begin{align*}
         \RHom(\coHHcx_*(C), k) & \simeq \RHom(C \boxempty^\pi_{C^0_e} \Omega C, k)
         \simeq \RHom(C \boxempty_{C^e} (C^e \boxempty^{\tau^e}_{C^0_e} \Omega C), k) \\
         &\stackrel{(*)}\simeq \RHom_{C^e}(C^e \cotensorte \Omega C, C^!) \simeq \RHom_{C^e}(C, C^!)
     \end{align*}
     where for $(*)$ we used a dg enhancement of the adjunction in Definition \ref{def:cysmooth}.
     This automatically exists as we may translate the adjunction to an adjunction on homotopy categories of dg modules by Proposition \ref{prop:cshriek}. Thus $\Omega C$ is locally proper and the adjunction of dg modules has a dg enhancement. 
     Then we may use Corollary \ref{cor:koszulemoduleenriched} to enhance the adjunction of comodules as $C^e \cotensorte\Omega C$ is $k$-free.
 \end{proof}
 \begin{definition}
 We say a weakly smooth pointed curved coalgebra $C$ has a \emph{smooth $n$-Calabi-Yau} structure if there is a cycle $\eta: \coHCcx_*(C) \to k[-n]$ in the dual of the cocyclic homology complex which induces a weak equivalence $C[n] \to C^!$.
  We say a weakly smooth pointed curved coalgebra $C$ has a \emph{weak smooth $n$-Calabi-Yau} structure if there is a cycle $\eta: \coHHcx_*(C) \to k[-n]$ in the dual of the coHochschild complex which induces a weak equivalence $C[n] \to C^!$.
 \end{definition}

\begin{theorem}\label{thm:dualmain}
    A locally proper dg category $A$ has a (weak) proper $n$-Calabi-Yau structure if and only if $\Ba A$ has a (weak) smooth $n$-Calabi-Yau structure.
    
    A weakly smooth pointed curved coalgebra $C$ has a (weak) smooth $n$-Calabi-Yau structure if and only if $\Omega C$ has a (weak) proper $n$-Calabi-Yau structure.
\end{theorem}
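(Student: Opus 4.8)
The plan is to mirror the structure of the proof of Theorem \ref{thm:main}, transporting the question across Koszul duality rather than reproving anything about Calabi-Yau structures directly. The key observation is that Proposition \ref{prop:cshriek} gives us exactly the dual analogue of Proposition \ref{prop:smoothproper}: it identifies weakly smooth pointed curved coalgebras with locally proper dg categories under $\Omega$, and provides the explicit comparison $C^! \simeq G(A^*)$ where $A = \Omega C$. So I would first establish the quasi-isomorphism $\coHCcx_*(C) \simeq \HCcx_*(\Omega C)$ (and its coHochschild version) from Proposition \ref{prop:cohhishh}, which identifies the dual cyclic (respectively coHochschild) cycles on the two sides. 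This reduces the statement to checking that a cycle on one side is non-degenerate if and only if its Koszul image is non-degenerate.

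For the non-degeneracy comparison I would set up a commuting diagram entirely analogous to the one in the proof of Theorem \ref{thm:main}, but now dualized. The dual Hochschild cycle $\eta \colon \HCcx_*(A) \to k[-n]$ induces, via the adjunction in \cite[Section 2.2]{brav2019relative} using that $A$ is locally proper, a map $A[n] \to A^*$ in $\De(A^e)$, and this is a weak equivalence precisely when $A$ has a proper $n$-Calabi-Yau structure. On the coalgebra side, Lemma \ref{lem:smoothcymap} produces from $\eta \colon \coHCcx_*(C) \to k[-n]$ a map $C[n] \to C^!$ in $\Dco(C^e)$. The diagram I would write has $\HHcx_*(A)$ and $\coHHcx_*(C)$ at the top (connected by Proposition \ref{prop:cohhishh}), and runs down through the dg-enhanced adjunctions of Corollary \ref{cor:koszulemoduleenriched} and Proposition \ref{prop:cshriek} to $\RHom_{A^e}(A, A^*)$ and $\RHom_{C^e}(C, C^!)$ at the bottom, with $RG_{C^e}$ identifying the two. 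Since Koszul duality for (co)modules (Theorem \ref{thm:modulekoszul}) is an equivalence of homotopy categories, it sends the weak equivalence $A[n] \xrightarrow{\simeq} A^*$ to the weak equivalence $C[n] \xrightarrow{\simeq} C^!$ under $G$, using $G(A^*) \simeq C^!$ from Proposition \ref{prop:cshriek} and $G(A) \simeq C$ (dual to Lemma \ref{lem:aec}). This handles both the weak version (using $\coHHcx_*$) and the lifted version (using $\coHCcx_*$).

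The hardest step, and the one requiring genuine care, is verifying that the bottom square of the diagram genuinely commutes, i.e.\ that the map induced by $\eta$ on the coalgebra side through Lemma \ref{lem:smoothcymap} really is the image under $G$ of the map induced by the Koszul image of $\eta$ on the algebra side. This is the dual counterpart of the statement that ``the adjunctions correspond under Koszul duality'' in Theorem \ref{thm:main}, and it is exactly where the new adjunction of Definition \ref{def:cysmooth} (as opposed to that of Lemma \ref{lem:cptadjunction}) must be shown compatible with its algebra counterpart. I expect this to follow from the dg enhancement established inside the proof of Lemma \ref{lem:smoothcymap} together with the identification $C^! \simeq G(A^*)$, but it is the place where one must be certain that the twisted cotensor and twisted hom constructions match the algebra-side tensor-hom adjunction term by term.

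Finally, the two directions of the theorem follow from each other by the unit/counit of bar-cobar: for the first sentence, $A$ is locally proper and has a proper $n$-Calabi-Yau structure if and only if $\Omega \Ba A$ does (local properness and Calabi-Yau-ness being invariant under quasi-equivalence by Theorem \ref{thm:algebrakoszul}), which by the coalgebra statement holds if and only if $\Ba A$ has a smooth $n$-Calabi-Yau structure. I would state the coalgebra-to-algebra direction as the main content and deduce the algebra-to-coalgebra direction as this short formal consequence, exactly as done at the end of the proof of Theorem \ref{thm:main}.
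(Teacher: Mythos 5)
Your proposal matches the paper's proof essentially step for step: the paper likewise transports the question across Koszul duality via the dualized version of the diagram from Theorem \ref{thm:main}, with $\uHom(\HHcx_*(A),k)$ and the dual of $\coHHcx_*(C)$ at the top (identified by Proposition \ref{prop:cohhishh}), Lemma \ref{lem:smoothcymap} and Proposition \ref{prop:cshriek} supplying the coalgebra-side map $C[n]\to C^!$, the bottom row $\RHom_{A^e}(A,A^*)\xrightarrow{RG_{A^e}}\RHom_{C^e}(C,C^!)$ identified by module--comodule Koszul duality, and the non-weak case handled by restricting to cycles factoring through $\HCcx_*(A)\simeq \coHCcx_*(C)$. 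The only immaterial difference is organizational: the paper sets $C=\Ba A$ and runs the diagram for the dg-category statement directly, whereas you prove the coalgebra statement with $A=\Omega C$ first and deduce the other direction formally via $\Omega\Ba A$, exactly as in the proof of Theorem \ref{thm:main}.
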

\begin{proof}
    Let  $C = \Ba A$.
    The proof works just like the proof of Theorem \ref{thm:main}, using Lemma \ref{lem:smoothcymap} and following the maps induced by the Hochschild cycle and the coHochschild cycle:

        $$\begin{tikzcd}
  \uHom(\HHcx_*(A), k) \arrow[r, "\simeq"] \arrow[d, "\simeq"]
    & \Ho(\coHHcx_*(C),k) \arrow[d, "\simeq"] \\
    \uHom(A \otimes_{A^e}  (A^e \cotensorte C), k) \arrow[d, "\simeq"]  \arrow[r, "\simeq"]
    & \uHom(C \boxempty_{C^e} (C^e \cotensorte A), k) \arrow[d, "\simeq"] \\
     \RHom_{A^e}(A^e \cotensorte C, A^*) \arrow[d, "\simeq"] 
     & \RHom_{C^e}(C^e \cotensorte A, k \cotensorte A) \arrow[d, "\simeq"] 
     \\
     \RHom_{A^e}(A, A^*)  \arrow{r}{RG_{A^e}}[swap]{\simeq}
     & \RHom_{C^e}(C, C^!) 
\end{tikzcd}$$

Again a dual Hochschild cycle induces a weak equivalence $A \to A^*$ if and only if its image in $\coHH_*(C)$ induces an equivalence $C \to C^![-n]$. To consider non-weak Calabi-Yau structures we restrict to maps factoring through $\HCcx_*(A) \simeq \coHCcx_*(C)$.
\end{proof}

\section{Examples}
\subsection{Lie algebras}\label{sect:lie}
We turn to examples and begin with the observation that, while it is a priori difficult to check if a pointed curved coalgebra $C$ has a proper Calabi-Yau structure, the situation simplifies when $C$ is cocommutative.
\begin{definition}
    Let $C$ be a cocommutative coalgebra. We write $C^{(3)}$ for $C$ considered as a right $C$-comodule and left $C^e$-comodule by iterating the usual comultiplication.
\end{definition}
\begin{remark}
    Note that while any coalgebra is a bicomodule over itself this ``tricomodule'' structure only exists if $C$ is cocommutative, as the different coactions are not compatible otherwise.
\end{remark}
\begin{lemma}\label{lem:trimodule}
    Let $M \to N$ be weak equivalence of left $C$-comodules. Then there is an equivalence of $C$-bicomodules $M \boxempty_C C^{(3)} \to N \boxempty_C C^{(3)}$.
\end{lemma}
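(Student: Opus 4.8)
The plan is to identify the functor $-\boxempty_C C^{(3)}$ with a pushforward along the diagonal, and then transport the statement across Koszul duality, where it becomes extension of scalars along a dg algebra map and is handled by cofibrancy.

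First I would record that cocommutativity of $C$ is exactly what makes the comultiplication $\Delta \colon C \to C\otimes C = C^e$ a morphism of (curved pointed) coalgebras. Regarding $M$ as a right $C$-comodule via cocommutativity, the counit isomorphism $M \boxempty_C C^{(3)} \xrightarrow{\cong} M$ identifies $M \boxempty_C C^{(3)}$, as a chain complex, with $M$ itself, now carrying the left $C^e$-comodule (i.e.\ $C$-bicomodule) structure transported from $C^{(3)}$. In other words, $-\boxempty_C C^{(3)}$ is naturally isomorphic to the pushforward functor $\Delta_! \colon C\Comod \to C^e\Comod$ that corestricts the coaction along $\Delta$, and the map $f\boxempty_C \id$ is, on underlying complexes, just $f$ equipped with the induced bicomodule structures.

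Next I would pass through Koszul duality. Applying the cobar functor to $\Delta$ and using Lemma \ref{lem:omegaop} produces a morphism of dg algebras $\Omega\Delta \colon \Omega C \to \Omega(C^e) \simeq (\Omega C)^e$ -- concretely the coproduct of the Hopf structure that cocommutativity induces on $\Omega C$. The key step is a natural weak equivalence
\[ F_{C^e}\big(\Delta_! M\big) \;\simeq\; (\Omega C)^e \otimes^{L}_{\Omega C} F_C(M), \]
exhibiting $F_{C^e}\circ \Delta_!$ as derived extension of scalars along $\Omega\Delta$ applied to $F_C$. I would establish this by comparing underlying graded objects and matching the twisted differentials on both sides, both of which are governed by $\tau^e = \tau\otimes 1 + 1\otimes \tau$ and the diagonal coaction $(\Delta\otimes\id)\circ\lambda_M$; this is the Koszul-dual incarnation of the fact that pushforward of comodules corresponds to extension of scalars of modules, and the computation is of the same flavour as Lemma \ref{lem:aec} and the acyclicity argument in the proof of Theorem \ref{thm:modulekoszul}. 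One can sanity-check it at $M = C$, where $\Delta_! C$ is the bicomodule $C$ and both sides are weakly equivalent to the bimodule $\Omega C$.

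To conclude, recall from Theorem \ref{thm:modulekoszul} that $F_C$ and $F_{C^e}$ both preserve and reflect weak equivalences, so it suffices to show $F_{C^e}(\Delta_! f)$ is a quasi-isomorphism of $(\Omega C)^e$-modules. Assuming, as we may, that $M$ and $N$ are $k$-free, the modules $F_C(M)$ and $F_C(N)$ are cofibrant over $\Omega C$, as in the proof of Corollary \ref{cor:dcohoms}. Extension of scalars along $\Omega\Delta$ sends cofibrant modules to cofibrant modules and preserves quasi-isomorphisms between them; since $F_C(f)$ is a quasi-isomorphism by hypothesis, so is $(\Omega C)^e\otimes_{\Omega C}F_C(f) \simeq F_{C^e}(\Delta_! f)$, and hence $f\boxempty_C\id_{C^{(3)}} = \Delta_! f$ is a weak equivalence of $C$-bicomodules. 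The main obstacle is the natural weak equivalence of the third paragraph: verifying that $F_{C^e}$ really intertwines the comodule pushforward $\Delta_!$ with module extension of scalars along $\Omega\Delta$ requires careful bookkeeping of the diagonal coaction and of the passage from the cotensor over $C_0$ to the cotensor over $C_0^e$, together with checking that the twisted differentials agree. Everything else is formal once this compatibility is in place.
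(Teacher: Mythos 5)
Your proposal is correct in substance but takes a genuinely different route from the paper's proof. The paper argues entirely on the comodule side: since $C$ is injective as a comodule over itself (a fact available over a field), the functor $-\boxempty_C C^{(3)}$ is exact, and since kernels and tensor products over $k$ commute with direct sums it also preserves direct sums; hence it sends coacyclic objects to coacyclic objects, and as weak equivalences in the coderived category are exactly the maps with coacyclic cone, it preserves weak equivalences. You instead recognize $-\boxempty_C C^{(3)}$ as corestriction $\Delta_!$ along the diagonal $\Delta\colon C\to C^e$ and transport the problem across Koszul duality, where it becomes extension of scalars along the coproduct $\Omega C\to(\Omega C)^e$ of the dg bialgebra structure induced by cocommutativity, concluding via Ken Brown's lemma for the left Quillen extension functor applied to the cofibrant modules $F_C M$ and $F_C N$. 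Your key compatibility $F_{C^e}\circ\Delta_!\simeq\bigl((\Omega C)^e\otimes_{\Omega C}-\bigr)\circ F_C$ does hold, and the verification is exactly the bookkeeping you describe: for $c\in C$ one has $\tau^e(\Delta c)=\tau(c)\otimes 1+1\otimes\tau(c)$, which is precisely the coproduct of $\Omega C$ applied to the generator $\tau(c)$, so the two twisted differentials coincide. The paper's argument buys brevity and needs no Hopf-theoretic input; yours buys an explicit identification of the mechanism (the same coproduct on $\Omega C$ that reappears in Lemma \ref{lem:hopf} and Section \ref{sect:manifold}) and avoids the injectivity/coacyclicity facts, so it even works over a PID for $k$-free comodules --- though note that your reduction ``as we may'' to $k$-free $M$, $N$ is automatic only over a field (over a PID it would be circular, since replacing $M$ by a weakly equivalent free comodule already requires knowing $\Delta_!$ preserves weak equivalences), and that your parenthetical claim that $\Delta$ is a morphism of \emph{curved} pointed coalgebras requires zero curvature, since $(h\otimes\eta+\eta\otimes h)\circ\Delta=2h\neq h$ otherwise. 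Both caveats are harmless here, as the tricomodule $C^{(3)}$ and the intended application in Corollary \ref{cor:cocommutative} already presuppose that setting.
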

\begin{proof}
    This immediately follows by interpreting left $C^e$-comodules as $C$-bicomodules 
We have to show that $-\boxempty_C C^{(3)}$ preserves weak equivalences. 
As $C$ is an injective right comodule $-\boxempty_C C^{(3)}$ is exact (for such background facts on comodules we refer to \cite{doi1981homological}).
Thus $-\boxempty_C C^{(3)}$ preserves exact triples. 
Moreover as kernels and tensor products (over $k$) preserve direct sums so does the cotensor over $C$. 
It follows that $-\boxempty_C C^{(3)}$ preserves coacyclic objects and thus weak equivalences in the coderived category.
\end{proof}

\begin{corollary}\label{cor:cocommutative}
    Let $C$ be a finite-dimensional cocommutative pointed curved coalgebra with zero curvature. 
    Then  a cycle $\beta \in \coHNcx_n(C)$ is a proper $n$-Calabi-Yau structure if the induced map $C^* \to C[n]$ is a weak equivalence of $C$-comodules.
\end{corollary}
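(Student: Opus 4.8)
The plan is to reduce the \emph{bicomodule} condition we must check to the \emph{one-sided} comodule condition supplied by hypothesis, using the tricomodule functor $-\boxempty_C C^{(3)}$ together with Lemma \ref{lem:trimodule}. First I would unwind the definitions. By Definition \ref{def:cyproper}, $\beta\in\coHNcx_n(C)$ is a proper $n$-Calabi-Yau structure exactly when its image in $\coHHcx_n(C)$ is non-degenerate, i.e.\ induces a weak equivalence $C^\vee\simeq C[n]$ in $\Dco(C^e)$ of $C$-bicomodules. Since $C$ is finite-dimensional, Lemma \ref{lem:computeadjoint} and Remark \ref{rem:cycdual} identify $C^\vee$ with the linear dual $C^*$, and the induced map is the explicit bicomodule map $\phi(\beta)\colon C^*\to C[n]$ of Lemma \ref{lem:hochschildmap}. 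Thus $\phi(\beta)$ is \emph{a priori} a map of $C$-bicomodules, and the entire content of the corollary is to upgrade the hypothesis, that the underlying map of one-sided $C$-comodules is a weak equivalence, to a weak equivalence of $C$-bicomodules.

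The main step is then to apply $-\boxempty_C C^{(3)}$. As $C$ is cocommutative, left and right $C$-comodules coincide, so the one-sided weak equivalence $\phi(\beta)$ is in particular a weak equivalence of left $C$-comodules, and Lemma \ref{lem:trimodule} produces a weak equivalence of $C$-bicomodules
\[
\phi(\beta)\boxempty_C C^{(3)}\colon\; C^*\boxempty_C C^{(3)}\;\longrightarrow\; C[n]\boxempty_C C^{(3)}.
\]
It remains to identify the two sides with $C^*$ and $C[n]$ as $C$-bicomodules. For this I would combine the counit axiom with cocommutativity to produce natural isomorphisms of $C$-bicomodules $M\boxempty_C C^{(3)}\cong M$ for $M=C$ and $M=C^*$: cotensoring against the regular coaction of $C^{(3)}$ collapses by the counit, while the remaining left and right coactions coming from the iterated comultiplication reassemble the bicomodule structure, the two candidate coactions being interchangeable by cocommutativity (and, for $C^*$, by the dual structure of Lemma \ref{lem:dualcomodule}). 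Naturality of these identifications in bicomodule maps then carries $\phi(\beta)\boxempty_C C^{(3)}$ back to $\phi(\beta)$, whence $\phi(\beta)\colon C^*\to C[n]$ is a weak equivalence of $C$-bicomodules. Therefore $\beta$ is a weak proper Calabi-Yau structure, and since it already lifts the coHochschild cycle to $\coHNcx_n(C)$, it is a proper $n$-Calabi-Yau structure.

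The hard part is precisely the last identification $M\boxempty_C C^{(3)}\cong M$ of bicomodules. The subtlety is that $-\boxempty_C C^{(3)}$ reconstructs one of the two coactions of a bicomodule from the other, and this reconstruction returns the \emph{original} bicomodule structure only because cocommutativity forces the left and right coactions of $C$ (and of $C^*$) to agree after the symmetry flip. Verifying that the reconstructed coaction coincides with the given one on $C$ and $C^*$, and that the isomorphism is natural enough to transport $\phi(\beta)\boxempty_C C^{(3)}$ to $\phi(\beta)$, is exactly where the cocommutativity and finite-dimensionality hypotheses are genuinely consumed; the remaining bookkeeping with counits and the Sweedler formulas for the iterated comultiplication is routine.
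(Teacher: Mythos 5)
Your proof is correct and follows essentially the same route as the paper's: reduce the bicomodule condition to the one-sided hypothesis by cotensoring with $C^{(3)}$ via Lemma \ref{lem:trimodule}, and then identify $C^*\boxempty_C C^{(3)}$ and $C[n]\boxempty_C C^{(3)}$ with $C^*$ and $C[n]$ so that the cotensored map is $\phi(\beta)$ — the identification you single out as the ``hard part'' is exactly what the paper compresses into the phrase ``which becomes $\phi(\beta)$ after cotensoring with $C^{(3)}$.'' The one point to make explicit is that the zero-curvature hypothesis is what allows $C$ and $C^*$ to carry one-sided comodule structures in the first place (a curved coalgebra is a bicomodule but not a one-sided comodule over itself), so it is consumed before your argument starts, when the hypothesis and your ``underlying one-sided map'' are given meaning.
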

\begin{proof}
As $C$ has zero curvature $C$ and $C^*$ are naturally $C$-modules and one sees that $\beta$ induces a map $C^* \to C$ in $\Dco(C)$, which becomes $\phi(\beta)$  after cotensoring with $C^{(3)}$. We apply Lemma \ref{lem:trimodule} to  deduce that $C^* \to C[n]$ is also a weak equivalence of bicomodules.
\end{proof}

The corollary is very useful as it is easier to check weak equivalences for comodules than for bicomodules as we will see in our first example.

Let now $\mathfrak g$ be a finite dimensional Lie algebra of dimension $d$ over a field $k$. Then Chevalley-Eilenberg chains $C_*(\mathfrak g) \coloneqq C_*(\mathfrak g, k)$ form a cocommutative conilpotent dg coalgebra.
This is Koszul dual to the universal envelopping algebra $\cU(\mathfrak g)$ (see \cite{baranovsky2008universal}, this goes back to \cite{quillen1969rational}).

For a finite-dimensional Lie algebra $\mathfrak g$ we consider comodules over the Chevalley-Eilenberg chains $C_*(\mathfrak g)$. 
By Theorem \ref{thm:modulekoszul} we have $\De(\mathcal U(\mathfrak g)) \simeq \Dco(C_*(\mathfrak g))$.
As $\mathfrak g$-modules are $\mathcal U(\mathfrak g)$-modules, their derived category corresponds to  the coderived category of comodules over $C_*(\mathfrak g)$.

A Lie algebra $\mathfrak g$ is \emph{unimodular}
if $tr(\operatorname{ad}_{\mathfrak g} x) = 0$ for all $x \in \mathfrak  g$ or equivalently if $\bigwedge^d \mathfrak g^*$ is trivial.

\begin{remark}
    If $\mathfrak g$ is nilpotent or semisimple it is unimodular.
\end{remark}

\begin{lemma}\label{lem:ceadjunction}
    Write $A = \Omega C_*(\mathfrak g)$. We have  
    \[F(C_*(\mathfrak g)) = A\otimes^\tau C_*(\mathfrak g) \cong C_*(\mathfrak g, A)\]
    and 
    \[F(C^*(\mathfrak g)) = A \otimes^\tau C^*(\mathfrak g) \cong C^*(\mathfrak g, A),\] where the right hand sides are Chevalley-Eilenberg chains, resp.\ cochains, with coefficients in $A$ viewed as a $\mathfrak g$-module.
\end{lemma}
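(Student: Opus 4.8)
The plan is to prove both isomorphisms by writing out the twisted tensor product explicitly and recognising its differential as the Chevalley--Eilenberg differential with coefficients in $A$. Since $\mathfrak g$ yields a one-object coalgebra, $C_0 = k$ and the cotensor product $\boxempty_{C_0}$ of Proposition \ref{prop:moduleadjunction} reduces to the ordinary tensor product over $k$, so $F(M)=A\otimes^\tau M$ as stated. The first step is to fix the underlying graded identification: the Chevalley--Eilenberg chains are $C_*(\mathfrak g)=\bigwedge^{*}\mathfrak g$ as a graded coalgebra, so as graded $k$-modules $A\otimes^\tau C_*(\mathfrak g)=A\otimes\bigwedge^{*}\mathfrak g$, which is precisely the underlying graded module of $C_*(\mathfrak g,A)$; dually $A\otimes^\tau C^*(\mathfrak g)=A\otimes\bigwedge^{*}\mathfrak g^{*}$ underlies $C^*(\mathfrak g,A)$, where $C^*(\mathfrak g)$ is the finite-dimensional dual comodule supplied by Lemma \ref{lem:dualcomodule}. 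The $\mathfrak g$-module structure on $A$ to be used comes from the degree-zero generators: the universal twisting cochain restricts on $\bigwedge^{1}\mathfrak g=\mathfrak g$ to $x\mapsto[x]\in A_0$, and $d_A[x]=0$ since both the coalgebra differential and the reduced coproduct vanish on $\bigwedge^{1}\mathfrak g$, so $\mathfrak g$ acts on $A$ by multiplication by these cycles.

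The heart of the proof is the comparison of differentials. The differential on $A\otimes^\tau C_*(\mathfrak g)$ has three summands: the internal cobar differential $d_A\otimes\id$, the term $\id\otimes d_{C_*(\mathfrak g)}$ induced by the coalgebra differential, and the twist $d_\tau$. I would match these with the three features of the Chevalley--Eilenberg differential. The summand $d_A\otimes\id$ is the differential on the coefficient complex $A$; the coalgebra differential on $\bigwedge^{*}\mathfrak g$ is by definition the bracket term $x_1\wedge\cdots\wedge x_n\mapsto\sum_{i<j}\pm[x_i,x_j]\wedge x_1\wedge\cdots\widehat{x_i}\cdots\widehat{x_j}\cdots\wedge x_n$; and the part of the twist $d_\tau(a\otimes c)=\sum\pm(a\cdot\tau(c_{(1)}))\otimes c_{(2)}$ in which the coproduct splits off a single generator $x_i$ produces $\sum_i\pm(a\cdot[x_i])\otimes x_1\wedge\cdots\widehat{x_i}\cdots\wedge x_n$, which is exactly the module-action term. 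The cochain statement follows by the same bookkeeping after dualising: the coaction on $C^*(\mathfrak g)$ from Lemma \ref{lem:dualcomodule} dualises the $\mathfrak g$-action, and the twist reproduces the Chevalley--Eilenberg \emph{cochain} differential $\delta$ together with the action term.

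The main obstacle is the remaining part of the twist. The universal twisting cochain $\tau$ is non-zero on all of $\overline{C}=\bigoplus_{p\geq 1}\bigwedge^{p}\mathfrak g$, so $d_\tau$ a priori also contains terms $(a\cdot[x_S])\otimes x_{S^{c}}$ in which a wedge $x_S$ of length $\geq 2$ is split off, and these do not occur in the classical Chevalley--Eilenberg differential. The key point I would establish is that these higher terms encode precisely the fact that $A$, via $x\mapsto[x]$, is a $\mathfrak g$-module only up to coherent homotopy: indeed $[x][y]-[y][x]-[[x,y]]=\pm d_A[x\wedge y]$ is a boundary rather than zero, so there is no strict $\mathfrak g$-action and the naive classical complex would not even square to zero. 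Thus $C_*(\mathfrak g,A)$ must be read as the Chevalley--Eilenberg complex for this dg (homotopy-coherent) $\mathfrak g$-module structure, and the identification is then essentially definitional once the three pieces above are matched; the higher terms are exactly the higher homotopies of the action, which assemble with the cobar differential into the standard complex underlying the Koszul duality $\Omega C_*(\mathfrak g)\simeq\mathcal U(\mathfrak g)$ of \cite{quillen1969rational, baranovsky2008universal}. I expect the cleanest writeup to record the $\mathfrak g$-module $A$ together with these coherences and then simply observe, by inspection of all three summands of the twisted differential, that $A\otimes^\tau C_*(\mathfrak g)$ and $A\otimes^\tau C^*(\mathfrak g)$ are the asserted Chevalley--Eilenberg chain and cochain complexes with coefficients in $A$.
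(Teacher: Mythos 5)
Your proposal is correct and takes essentially the same route as the paper's (very terse) proof, which likewise just unravels the twisted tensor product and matches the twist against the $\mathfrak g$-action, invoking finite-dimensionality of $\mathfrak g$ for the cochain statement (your use of Lemma \ref{lem:dualcomodule}) and the fact that $C_0=k$ so that cotensor reduces to tensor. You are in fact more careful than the paper: your observation that $\tau$ is nonzero on all of $\overline{C}$, so the twist has components splitting off wedges of length $\geq 2$ which witness that $x\mapsto [x]$ is only a homotopy-coherent action (since $[x][y]-[y][x]-[[x,y]]=\pm d_A[x\wedge y]$), is precisely the point one must confront for the stated isomorphism to be literally true, with $C_*(\mathfrak g,A)$ and $C^*(\mathfrak g,A)$ read as Chevalley-Eilenberg complexes of $A$ as a dg module over $\Omega C_*(\mathfrak g)\simeq \mathcal U(\mathfrak g)$ rather than over $\mathfrak g$ strictly.
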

\begin{proof}
This follows from unravelling the definitions,
the twist of the tensor product corresponds exactly to the action of $\mathfrak g$ on $\mathcal U(\mathfrak g)$.
We use finite-dimensionality of $\mathfrak g$ for the second statement. 
As $C_*(\mathfrak g)$ is conilpotent we need not consider cotensor products here.
\end{proof}

We now recall Poincar\'e duality for Lie algebras.
\begin{theorem}\label{thm:liepd}
    Let $\mathfrak g$ be a $d$-dimensional Lie algebra and let $V$ be a dg $\mathfrak g$-module.
    Then there is a canonical quasi-isomorphism $C^*(\mathfrak g, V) \simeq C_*(\mathfrak g, V \otimes (\bigwedge^d \mathfrak g)^*)[d]$.
\end{theorem}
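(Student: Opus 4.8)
This is the classical Poincaré duality for Lie algebra (co)homology. The plan is to construct an explicit chain-level map realizing the duality and then verify it is a quasi-isomorphism. Recall that the Chevalley-Eilenberg cochains are $C^*(\mathfrak g, V) = \uHom_k(\bigwedge^* \mathfrak g, V)$ with the standard differential, while the chains are $C_*(\mathfrak g, W) = \bigwedge^* \mathfrak g \otimes W$. Since $\bigwedge^d \mathfrak g$ is one-dimensional, a choice of generator $\omega \in \bigwedge^d \mathfrak g$ identifies $(\bigwedge^d \mathfrak g)^*$ with $k$ up to the $\mathfrak g$-action, and contraction against $\omega$ gives, for each $p$, an isomorphism of vector spaces $\bigwedge^p \mathfrak g^* \xrightarrow{\cong} \bigwedge^{d-p}\mathfrak g$ (interior multiplication / the star operator on the exterior algebra).

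First I would define the candidate map degreewise by the formula
\[
\Phi \colon \uHom_k\Bigl(\textstyle\bigwedge^p \mathfrak g, V\Bigr) \longrightarrow \textstyle\bigwedge^{d-p}\mathfrak g \otimes V \otimes \bigl(\textstyle\bigwedge^d \mathfrak g\bigr)^*,
\]
sending a cochain $f$ to $(\iota_{(-)}\,\omega) \otimes f \otimes \omega^*$, i.e.\ using the contraction isomorphism on the exterior factor while tensoring the coefficients with the determinant line. The underlying linear map is an isomorphism in each degree by the exterior-algebra contraction isomorphism, so the only substantive point is that $\Phi$ is a chain map intertwining the Chevalley-Eilenberg cochain differential $d_{\mathrm{CE}}$ with the chain differential $\partial_{\mathrm{CE}}$ on coefficients $V \otimes (\bigwedge^d\mathfrak g)^*$, up to the degree shift $[d]$.

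The key step, and the main obstacle, is the compatibility with the differentials. The cochain differential has a term involving the $\mathfrak g$-action on $V$ and a term involving the Lie bracket (the "$[x_i,x_j]$" term), and dually for the chain differential. The verification reduces to a computation with interior and exterior multiplication on $\bigwedge^* \mathfrak g$ together with the adjoint action on $\bigwedge^d \mathfrak g$. This is where unimodularity would normally intervene: without twisting by $(\bigwedge^d \mathfrak g)^*$ the two differentials differ precisely by the trace term $\operatorname{tr}(\mathrm{ad}_x)$ coming from the action of $\mathfrak g$ on the top exterior power, and tensoring the coefficients with $(\bigwedge^d \mathfrak g)^*$ is exactly designed to absorb this discrepancy. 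So the expected strategy is to compute $\partial_{\mathrm{CE}} \circ \Phi - (-1)^{?}\Phi \circ d_{\mathrm{CE}}$ and check that the correction terms cancel once the determinant twist is incorporated into the $\mathfrak g$-action on the coefficients.

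Once $\Phi$ is verified to be a chain map, it is automatically a quasi-isomorphism because it is a degreewise isomorphism of the underlying graded $k$-modules (indeed an isomorphism of complexes), so no spectral-sequence or filtration argument is needed. I would note that since $k$ is a field and $\mathfrak g$ is finite-dimensional all the dualities and contraction isomorphisms are literal isomorphisms, making the naturality in $V$ transparent: the construction is manifestly functorial in the dg $\mathfrak g$-module $V$, which gives the asserted canonicity. The cleanest exposition is likely to cite this as a standard fact (e.g.\ from the literature on Lie algebra cohomology) and record the explicit contraction map so that it can be used in the subsequent application to proper Calabi-Yau structures.
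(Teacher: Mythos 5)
Your proposal is correct, and its core construction coincides with the paper's: the paper's explicit map $\phi(f)=(f\otimes\id)\Delta(\theta)$, for $\theta$ a generator of $\bigwedge^d\mathfrak g$, is precisely your contraction-against-the-top-form map $\Phi$, with the determinant twist absorbing the trace term $\operatorname{tr}(\operatorname{ad}_x)$ exactly as you describe. Where you genuinely differ is in how the quasi-isomorphism is established. The paper does not verify the chain-map identity at all: it cites Hazewinkel's duality theorem (and Knapp's presentation of the computation) for $V$ concentrated in degree $0$, and then handles dg coefficients by filtering $V$ by its canonical filtration and comparing the associated spectral sequences on the $E_1$-page. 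You instead observe that $\Phi$ is a degreewise isomorphism of the underlying graded modules, so once the chain-map identity is checked it is an isomorphism of complexes; since $\Phi$ commutes with the internal differential of $V$ for trivial reasons (it acts only on the exterior-algebra factor) and the Chevalley-Eilenberg direction is bounded by $d$, the statement for arbitrary dg $V$ follows at once, with no filtration or spectral sequence. Your route is thus more self-contained and yields a slightly stronger conclusion (an isomorphism of complexes rather than merely a quasi-isomorphism). Two caveats: first, like the paper, you defer the one substantive computation, namely that $\Phi$ intertwines the two differentials, so your proof is complete only once that verification (or a citation for it, as you suggest) is supplied; second, your appeal to $k$ being a field is unnecessary, since over a PID with $\mathfrak g$ free of finite rank the same contraction isomorphisms hold, which is how the paper's subsequent remark extends the theorem beyond fields.
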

\begin{proof}
If $V$ is concentrated in degree 0
    this is essentially the main theorem of 
    \cite{hazewinkel1970duality}, the computation is also presented in
    \cite[Theorem 6.10]{knapp1988lie}. 
    The proof gives the desired quasi-isomorphism.
 If $V$ is a complex of $\mathcal U(\mathfrak g)$-modules the result follows by considering the canonical filtration on $V$ and observing that the associated spectral sequence is an isomorphism on the $E_1$-term.

    To define the map explicitly we specify $\phi: C^*(\mathfrak g) \to C_*(\mathfrak g)[-d]$ by choosing a generator $\theta$ of $\bigwedge^d \mathfrak g$ and sending $f \in C^k(\mathfrak g)$ to $(f\otimes \id)\Delta(\theta)$. 
    This is inverse to the map in \cite{knapp1988lie}.
\end{proof}

Putting this together we obtain the following. 
\begin{proposition}\label{prop:liepd}
Let $\mathfrak g$ be unimodular. Then there is a weak equivalence of comodules $C^*(\mathfrak g) \to C_*(\mathfrak g)[-d]$. 
Thus $C_*(\mathfrak g)$ is right $d$-Calabi-Yau.
\end{proposition}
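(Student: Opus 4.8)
The plan is to upgrade the Chevalley-Eilenberg Poincaré duality quasi-isomorphism of Theorem \ref{thm:liepd} to a weak equivalence of comodules by feeding in the Koszul dual algebra as coefficients, and then to invoke Corollary \ref{cor:cocommutative} to repackage the resulting self-duality as a proper Calabi-Yau structure.

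First I would use unimodularity. By hypothesis $\bigwedge^d \mathfrak g^*$ is the trivial $\mathfrak g$-module, so $(\bigwedge^d \mathfrak g)^* \cong k$ as $\mathfrak g$-modules, and Theorem \ref{thm:liepd} specializes, for every dg $\mathfrak g$-module $V$, to a natural quasi-isomorphism $\phi_V \colon C^*(\mathfrak g, V) \to C_*(\mathfrak g, V)[-d]$ given by capping with a fixed generator $\theta$ of $\bigwedge^d \mathfrak g$. The relevant coefficient module is $V = A \coloneqq \Omega C_*(\mathfrak g)$, regarded as a dg $\mathfrak g$-module through $\mathfrak g \to \mathcal{U}(\mathfrak g) \simeq A$; this is exactly the action appearing in Lemma \ref{lem:ceadjunction}.

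Next I would translate $\phi_A$ across Koszul duality. By Lemma \ref{lem:ceadjunction} we have $F(C^*(\mathfrak g)) \cong C^*(\mathfrak g, A)$ and $F(C_*(\mathfrak g)) \cong C_*(\mathfrak g, A)$, and since $F = A \otimes^\tau -$ commutes with shifts, $F(C_*(\mathfrak g)[-d]) \cong C_*(\mathfrak g, A)[-d]$. The key point is that under these identifications $\phi_A$ is precisely $F(\phi)$, where $\phi \colon C^*(\mathfrak g) \to C_*(\mathfrak g)[-d]$ is the cap-with-$\theta$ map of comodules: the cap product is $A$-linear, and the twist built into $F$ matches the $\mathfrak g$-action on the coefficients, so the coefficient map is the $A$-linear extension of $\phi$. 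Granting this, $F(\phi) = \phi_A$ is a quasi-isomorphism of dg $A$-modules, so by Definition \ref{def:coderived} the map $\phi$ is a weak equivalence of $C$-comodules, which establishes the first claim (identifying $C^*(\mathfrak g)$ with the linear dual $C^\vee$ via Lemma \ref{lem:computeadjoint}).

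Finally I would apply Corollary \ref{cor:cocommutative}. The coalgebra $C = C_*(\mathfrak g)$ is finite-dimensional, cocommutative, and has zero curvature, and $\theta$ determines a cycle in $\coHNcx_d(C)$ whose induced comodule map is $\phi$; the Corollary then promotes the comodule-level weak equivalence to a weak equivalence of bicomodules, yielding the proper $d$-Calabi-Yau structure on $C_*(\mathfrak g)$. The main obstacle is the middle step, namely verifying carefully that the coefficient Poincaré duality map $\phi_A$ agrees with $F$ applied to the coalgebra-level duality map $\phi$ — that is, that capping with $\theta$ is compatible with the cobar twisting cochain — together with checking that $\theta$ genuinely lifts to a negative cocyclic cycle so that the hypotheses of Corollary \ref{cor:cocommutative} are met.
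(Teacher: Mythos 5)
Your proposal follows the paper's own route essentially step for step: specialize Theorem \ref{thm:liepd} to $V = \Omega C_*(\mathfrak g)$, use Lemma \ref{lem:ceadjunction} to identify the coefficient duality map with $F(\phi)$ (so $\phi$ is a weak equivalence of comodules by Definition \ref{def:coderived}), then invoke Corollary \ref{cor:cocommutative} and Lemma \ref{lem:computeadjoint}. That part is correct and is exactly what the paper does.

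However, there is one genuine gap, which you flag as ``the main obstacle'' but never close: Corollary \ref{cor:cocommutative} takes as input a cycle $\beta \in \coHNcx_d(C)$ in the \emph{negative cocyclic} complex, and you only assert that $\theta$ determines such a cycle whose induced comodule map is $\phi$. Without that verification you have produced only a \emph{weak} proper Calabi-Yau structure (a coHochschild cycle inducing an equivalence), not the proper $d$-Calabi-Yau structure that ``right $d$-Calabi-Yau'' means here. The missing argument is short but essential, and it is where cocommutativity enters a second time, independently of its use in Corollary \ref{cor:cocommutative}: translate the duality map $f \mapsto (f\otimes\id)\Delta(\theta)$ into an element of the coHochschild complex $C_*(\mathfrak g)\otimes^\pi \Omega C_*(\mathfrak g)$, where in Sweedler notation it reads $\theta^{(1)}[\theta^{(2)}]$ (this translation also identifies the induced map with $\phi$, the other assertion you left unchecked). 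Then Connes' operator satisfies
\[
B\bigl(\theta^{(1)}[\theta^{(2)}]\bigr) = [\theta^{(1)}|\theta^{(2)}] - [\theta^{(2)}|\theta^{(1)}] = 0
\]
by cocommutativity of $C_*(\mathfrak g)$, so the coHochschild cycle is strictly $B$-closed and therefore lifts, with all higher $u$-components zero, to a cycle in $\coHNcx_d(C) = (\coHHcx_*(C)[[u]], b+uB)$. With this computation supplied, your argument is complete and coincides with the paper's proof.
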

\begin{proof}
    The map $\phi$ from the proof of Theorem \ref{thm:liepd} is a left comodule map (where $C^*$ has the comodule structure from \ref{lem:dualcomodule}) and does induce the Poincar\'e duality map for Lie algebras.

    We now set $V = \Omega C_*(\mathfrak g)$, considered as a differential graded $\mathfrak g$-module, in Theorem \ref{thm:liepd}
    and deduce from Lemma \ref{lem:ceadjunction} that $\phi$ is a weak 
 equivalence of $C$-comodules.

    It then follows from Corollary \ref{cor:cocommutative} that there is a a weak equivalence of bicomodules  $C^*(\mathfrak g) \to C_*(\mathfrak g)[-d]$.
    Thus Lemma \ref{lem:computeadjoint} and Theorem \ref{thm:main} imply the weak version of the second part of the statement.

  It remains to show that the map is a cyclic cycle. 
  We translate the map $f \mapsto (f\otimes \id)\Delta(\theta)$ from Theorem \ref{thm:liepd} to an element of $C_*(\mathfrak g)\otimes^\pi \Omega C_*(\mathfrak g)$ 
  which  takes the form $\theta^{(1)}[\theta^{(2)}]$ in Sweedler notation.
  Then $B(\theta^{(1)}[\theta^{(2)}]) = [\theta^{(1)}|\theta^{(2)}] - [\theta^{(2)}|\theta^{(1)}] = 0$ by cocommutativity of $C$.
\end{proof}

Thus we obtain: 
\begin{theorem}\label{thm:liekoszul}
    Let $\mathfrak g$ be a $d$-dimensional Lie algebra. Then the following are equivalent:
    \begin{enumerate}
        \item $\mathcal U(\mathfrak g)$ has a smooth $d$-Calabi-Yau structure 
        \item $C_*(\mathfrak g)$ has a proper $d$-Calabi-Yau structure.
         \item $\mathfrak  g$ is unimodular.
    \end{enumerate}
\end{theorem}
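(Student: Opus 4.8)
The plan is to route the whole equivalence through the Koszul bridge $\Omega C_*(\mathfrak g) \simeq \mathcal U(\mathfrak g)$ (\cite{quillen1969rational, baranovsky2008universal}) together with Theorem \ref{thm:main}. First I would record the two standing hypotheses that make the machinery apply: since $\mathfrak g$ has dimension $d$, the coalgebra $C_*(\mathfrak g)$ has underlying graded module $\bigwedge^* \mathfrak g$ and is thus of finite rank, hence proper by Lemma \ref{lem:properfinite}; correspondingly $\Omega C_*(\mathfrak g) \simeq \mathcal U(\mathfrak g)$ is smooth by Proposition \ref{prop:smoothproper}. With properness of $C_*(\mathfrak g)$ in hand, the equivalence (1)$\Leftrightarrow$(2) is a direct instance of Theorem \ref{thm:main}: a proper $d$-Calabi-Yau structure on $C_*(\mathfrak g)$ corresponds to a smooth $d$-Calabi-Yau structure on $\Omega C_*(\mathfrak g) \simeq \mathcal U(\mathfrak g)$, compatibly with the negative (co)cyclic lifts.

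It remains to tie either of these conditions to unimodularity. The implication (3)$\Rightarrow$(2) is exactly Proposition \ref{prop:liepd}, which produces a weak equivalence of bicomodules $C^*(\mathfrak g) \simeq C_*(\mathfrak g)[-d]$ and checks that the representing class is cocyclic, giving a genuine proper $d$-Calabi-Yau structure. So the only real content left is the converse, say (1)$\Rightarrow$(3), namely that the existence of a Calabi-Yau structure forces the modular character to vanish.

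For the converse I would argue at the level of homology, where the modular twist becomes visible. A proper Calabi-Yau structure on $C_*(\mathfrak g)$ yields in particular a weak equivalence of bicomodules $C^\vee \simeq C_*(\mathfrak g)[d]$; since $C^\vee \simeq C^*(\mathfrak g)$ by Lemma \ref{lem:computeadjoint}, forgetting one side gives a one-sided comodule weak equivalence $C^*(\mathfrak g) \simeq C_*(\mathfrak g)[-d]$. Applying $F$ and Lemma \ref{lem:ceadjunction} turns this into a quasi-isomorphism of $\mathcal U(\mathfrak g)$-modules $C^*(\mathfrak g, \mathcal U(\mathfrak g)) \simeq C_*(\mathfrak g, \mathcal U(\mathfrak g))[-d]$. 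I would then compute both sides with $\mathcal U(\mathfrak g)$ regarded as the regular module: the right-hand side has homology $H_*(\mathfrak g; \mathcal U(\mathfrak g)) = k$ concentrated in degree $0$ (the trivial module), whereas the left-hand side computes $H^*(\mathfrak g;\mathcal U(\mathfrak g))$, which by the $V = \mathcal U(\mathfrak g)$ case of Theorem \ref{thm:liepd} (with coefficient twist $(\bigwedge^d\mathfrak g)^*$) is the one-dimensional character module $k_\chi$ in top degree $d$, where $\chi(x) = \operatorname{tr}(\operatorname{ad}_x)$ is the modular character. A module quasi-isomorphism therefore induces an isomorphism of $\mathcal U(\mathfrak g)$-modules $k_\chi \cong k$ on this top homology, which holds precisely when $\chi = 0$, that is, when $\mathfrak g$ is unimodular.

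The step I expect to be the main obstacle is exactly this last detection: one must argue that the duality can be an equivalence \emph{only} when the coefficient twist $(\bigwedge^d \mathfrak g)^*$ is trivial, rather than merely observing that the canonical Poincaré-duality map carries a twist. The key point is that the twist is recorded by the residual module structure on the one-dimensional top homology, so no choice of representing cycle can repair a nonzero $\chi$; the care lies in keeping track of which side the character acts on and in confirming (via Corollary \ref{cor:cocommutative} and Lemma \ref{lem:ceadjunction}) that the bicomodule weak-equivalence condition genuinely reduces to this one-sided computation. This recovers, through Koszul duality, the classical equivalence between unimodularity and Poincaré duality for $\mathfrak g$ \cite{hazewinkel1970duality, he2010cocommutative}.
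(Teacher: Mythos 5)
Your proposal is correct and shares the paper's skeleton for two of the three implications: (1)$\Leftrightarrow$(2) is Theorem \ref{thm:main} applied to $\Omega C_*(\mathfrak g)\simeq \mathcal U(\mathfrak g)$, and (3)$\Rightarrow$(2) is Proposition \ref{prop:liepd}, exactly as in the paper. The genuine divergence is in the converse. The paper stays with bimodule-type coefficients: it specializes the Calabi--Yau equivalence $C^*(\mathfrak g, \Omega(C_*\mathfrak g)^e)\to C_*(\mathfrak g, \Omega(C_*\mathfrak g)^e)[d]$ to coefficients in the one-dimensional module $(\bigwedge^d\mathfrak g)^*$, and plays the resulting duality off against the classical duality of Theorem \ref{thm:liepd}; this yields $H^0(\mathfrak g,(\bigwedge^d\mathfrak g)^*)\cong H_d(\mathfrak g,(\bigwedge^d\mathfrak g)^*)\cong H^0(\mathfrak g,k)=k$, and since $H^0$ is the maximal trivial submodule of the one-dimensional module $(\bigwedge^d\mathfrak g)^*$, its nonvanishing forces that module to be trivial. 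You instead pass to one-sided comodules, apply $F$ and Lemma \ref{lem:ceadjunction} to get a quasi-isomorphism $C^*(\mathfrak g,\mathcal U(\mathfrak g))\simeq C_*(\mathfrak g,\mathcal U(\mathfrak g))[-d]$ in $\De(\mathcal U(\mathfrak g))$, and detect the modular character in the residual module structure on homology: $\operatorname{Ext}^d_{\mathcal U(\mathfrak g)}(k,\mathcal U(\mathfrak g))\cong k_\chi$ against $\operatorname{Tor}_0\cong k$ trivial, so a module isomorphism forces $\chi=0$. Both detections are sound and both ultimately rest on Theorem \ref{thm:liepd}; the paper's version trades all module-structure bookkeeping for a comparison of spaces of invariants, while yours recovers the standard ``twisted Calabi--Yau / Nakayama character'' picture of $\mathcal U(\mathfrak g)$, which is arguably more conceptual but requires more care with sides and signs.

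Two points in your sketch need repair or expansion. First, your appeal to Corollary \ref{cor:cocommutative} for the reduction to one-sided comodules is backwards: that corollary upgrades a one-sided weak equivalence to a bicomodule weak equivalence, whereas you need the restriction direction. That direction is true but requires its own (easy) argument: $F_{C^e}(M)$ is semifree as a right $\Omega C$-module and $F_C(M)\cong F_{C^e}(M)\otimes_{\Omega C}k$ because $\varepsilon\circ\tau=0$, so quasi-isomorphisms descend along restriction; note this does not even use cocommutativity. Second, your identification of $H^d(\mathfrak g,\mathcal U(\mathfrak g))$ with $k_\chi$ needs the quasi-isomorphism of Theorem \ref{thm:liepd} with $V=\mathcal U(\mathfrak g)$ to be equivariant for the residual right-multiplication action; this holds because the duality map is contraction against a fixed generator of $\bigwedge^d\mathfrak g$, but it is a verification, not a citation, and it is exactly where the character bookkeeping you flagged as the ``main obstacle'' lives.
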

\begin{proof}
The equivalence of (1) and (2) is Theorem \ref{thm:main} since $\Omega C_*(\mathfrak g) \simeq \mathcal U(\mathfrak g)$.

(3) implies (2) by Proposition \ref{prop:liepd}. 

We consider the converse. 
As $\Omega C_*(\mathfrak g)^e$ is the universal 
$\mathfrak g$-bimodule  the equivalence $C^*(\mathfrak g, \Omega(C_*\mathfrak g)^e) \to C_*(\mathfrak g, \Omega(C_*\mathfrak g)^e[d])$ from the proper Calabi-Yau structure also implies Poincar\'e duality for coefficients in the trivial bimodule $k$ and thus $H^0(\mathfrak g, (\bigwedge^d \mathfrak g)^*) \simeq H_n(\mathfrak g, (\bigwedge^d \mathfrak g)^*)$. Meanwhile Theorem \ref{thm:liepd} implies $H^0(\mathfrak g, k) \cong H_n(\mathfrak g, (\bigwedge^d \mathfrak g)^*) $. 
As $H^0$ is just the maximal trivial submodule these statements together imply $\bigwedge^d \mathfrak g$ is trivial and $\mathfrak g$ is unimodular.   
\end{proof}
The weak Calabi-Yau structure on the universal enveloping algebra of a unimodular Lie algebra can be found in \cite[Lemma 4.1]{he2010cocommutative}, relying on \cite{yekutieli2000rigid}.
As far as we are aware the lift of the equivalence to a class in cyclic homology is new.

\begin{remark}
We may consider the situation if $k$ is not a field.
Then we assume $\mathfrak g$ is a free $k$-module of finite rank and Theorem \ref{thm:liepd} still holds, see \cite{hazewinkel1970duality}.
Proposition \ref{prop:liepd} also holds and we may deduce the existence of a smooth $n$-Calabi-Yau structure on $\Omega C_*(\mathfrak g)$.
\end{remark}

\subsection{Poincaré duality}
\label{sect:manifold}
In this section we work over an arbitrary PID $k$. Let $K$ be a simplicial complex. Denote by $K^{\dagger}$ the simplicial set obtained by applying the nerve functor to the poset of face inclusions of $K$ and by $C^{\Delta}_*(K;k)$ the dg coalgebra of normalized simplicial chains on $K^{\dagger}$. Denote by $C_*(K;k)$ be the dg coalgebra of normalized \textit{singular} chains on the geometric realization of $K$. The coproduct of both $C^{\Delta}_*(K;k)$ and $C_*(K;k)$ is given by the Alexander-Whitney diagonal approximation. Recall there is a natural quasi-isomorphism of coalgebras $j \colon C^{\Delta}_*(K;k) \to C_*(K;k)$ that consideres a simplex in the barycentric subdivision of $K$ as a singular simplex in $|K|$. The natural map $j$ also induces a quasi-isomorphism with respect to coefficients in any local system. 

If $b \in K_0$ is a fixed vertex, we denote by $C_*(K,b;k)$ the reduced singular chains defined by first considering the chain complex generated by singular simplices $\sigma \colon \Delta^n \to |K|$ sending all vertices to $b$ and then quotienting out the sub-complex of degenerate chains. The Alexander-Whitney coproduct induces a conilpotent dg coalgebra structure on $C_*(K,b;k)$ and, if $K$ is connected, the natural map $i \colon C_*(K,b;k) \to C_*(K,k)$ is a quasi-isomorphism. In fact, any collection \begin{eqnarray} \label{choiceofpaths} \mathcal{O}=\{ \gamma_x \colon [0,1] \to |K| : x \in |K|, \gamma(0)=x, \gamma(1)=b\}
\end{eqnarray} of paths from each point in $|K|$ to $b$ gives rise to a map \[g_{\mathcal{O}} \colon C_*(K,k) \to C_*(K,b;k)\] chain homotopy inverse to $i$. For any singular simplex $\sigma \colon \Delta^n \to |K|$, \[g_{\mathcal{O}} (\sigma) \colon \Delta^n \to |K|\] is defined by using the paths in $\mathcal{O}$ to connect all vertices of $\sigma$ to $b$. 

 \begin{proposition} \label{prop: proper}
If $(K,b)$ is a connected pointed finite simplicial complex, then $C_*(K,b;k)$ is a proper conilpotent dg coalgebra. 
\end{proposition}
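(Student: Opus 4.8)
The plan is to realize $C_*(K,b;k)$ as weakly equivalent, in the sense of pointed curved coalgebras, to a conilpotent dg coalgebra of finite rank, and then to invoke Lemma \ref{lem:properfinite} together with Corollary \ref{cor:properwelldefined}. Indeed, a pointed curved coalgebra of finite rank is proper by Lemma \ref{lem:properfinite}, and properness is invariant under weak equivalence by Corollary \ref{cor:properwelldefined}; so it suffices to produce one such finite model and a zig-zag of weak equivalences in $\ptdco$ connecting it to $C_*(K,b;k)$.

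First I would produce the finite model. Since $K$ is a finite simplicial complex, its face poset is finite, so the nerve $K^{\dagger}$ has only finitely many non-degenerate simplices, and hence the normalized simplicial chain coalgebra $C^{\Delta}_*(K;k)$ is concentrated in finitely many degrees and of finite rank in each. Passing to the based, reduced version at the vertex $b$ (collapsing the other group-like $0$-chains so that $C_0 = k$) yields a conilpotent dg coalgebra, still of finite rank, which I will denote $C^{\Delta}_*(K,b;k)$; by Lemma \ref{lem:properfinite} it is proper.

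Next I would connect this model to $C_*(K,b;k)$. The coalgebra map $j \colon C^{\Delta}_*(K;k) \to C_*(K;k)$ and the map $i \colon C_*(K,b;k) \to C_*(K;k)$, together with the chain-homotopy inverse $g_{\mathcal{O}}$ determined by a choice of paths $\mathcal{O}$, assemble — after the analogous reduction of $j$ at $b$ — into a zig-zag of coalgebra quasi-isomorphisms linking $C^{\Delta}_*(K,b;k)$ and $C_*(K,b;k)$. The essential point I must verify is that each of these is a \emph{weak equivalence} of pointed coalgebras, i.e.\ that $\Omega$ carries it to a quasi-equivalence of dg categories. This is the main obstacle: in the presence of a nontrivial fundamental group, being a quasi-isomorphism of conilpotent coalgebras is a strictly weaker condition than being a weak equivalence in $\ptdco$, so the quasi-isomorphisms $i$, $j$, $g_{\mathcal{O}}$ do not automatically suffice.

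This last verification is exactly where the extended Adams theorem enters: by \cite{rivera-zeinalian, riveracobar} the cobar construction $\Omega C_*(-,b;k)$ naturally computes the chains $C_*(\Omega_b|{-}|;k)$ on the based loop space, and likewise $\Omega$ of the multi-object coalgebra $C_*(K;k)$ computes the corresponding path-category chains; by functoriality each comparison map becomes, under $\Omega$, a quasi-isomorphism (respectively a quasi-equivalence, using that $K$ is connected so that the single object $b$ is essentially surjective). Granting this, all maps in the zig-zag are weak equivalences of conilpotent dg coalgebras, so Corollary \ref{cor:properwelldefined} transports properness from the finite model $C^{\Delta}_*(K,b;k)$ to $C_*(K,b;k)$, proving the proposition. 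Equivalently, one may phrase the conclusion through Proposition \ref{prop:smoothproper}: properness of $C_*(K,b;k)$ is the smoothness of $\Omega C_*(K,b;k) \simeq C_*(\Omega_b|K|;k)$, which the finite proper model makes manifest.
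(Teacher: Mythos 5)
Your overall strategy---produce a finite-rank conilpotent model, apply Lemma \ref{lem:properfinite}, and transport properness along a weak equivalence via Corollary \ref{cor:properwelldefined}---is reasonable in outline, but the finite model you construct does not do what you claim, and this is a genuine gap. Collapsing all group-like $0$-chains of $C^{\Delta}_*(K;k)$ to the basepoint is, at the level of simplicial sets, the quotient $K^{\dagger}/(K^{\dagger})_0$ identifying \emph{all} vertices to a single point. The $0$-skeleton of $K^{\dagger}$ is a discrete set with (in general) many points, hence not contractible, so this quotient changes the homotopy type: $|K^{\dagger}/(K^{\dagger})_0| \simeq |K| \vee \bigvee S^1$, with one circle for each non-basepoint vertex of $K^{\dagger}$. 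Concretely, the span of the differences $v-b$ is a subcomplex concentrated in degree $0$, and the long exact sequence of the quotient gives $H_1$ of your coalgebra as $H_1(K;k)\oplus k^{|V|-1}$, while $\pi_1$ acquires a free factor. So your $C^{\Delta}_*(K,b;k)$ is not even quasi-isomorphic to $C_*(K,b;k)$, no zig-zag of weak equivalences in $\ptdco$ between them can exist, the ``analogous reduction of $j$ at $b$'' is not defined, and Corollary \ref{cor:properwelldefined} cannot be invoked. This is exactly the trap of the non-simply-connected setting: a correct reduction must use paths to the basepoint, as in $g_{\mathcal{O}}$, or collapse a \emph{contractible} subcomplex such as a spanning tree, never the whole $0$-skeleton.

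It is also worth noting that the paper sidesteps the difficulty you correctly identify as the ``main obstacle'' (checking that comparison maps are weak equivalences of \emph{coalgebras}). It keeps the many-vertex, finite-rank coalgebra $C^{\Delta}_*(K;k)$ as the finite object, regards it as a bicomodule over $C=C_*(K,b;k)$ by corestriction along $\rho_{\mathcal{O}}=g_{\mathcal{O}}\circ j$, and proves directly that $F_C(\rho_{\mathcal{O}})$ is a quasi-isomorphism by a filtration spectral sequence whose $E_2$-page is the classical comparison of simplicial and singular homology of $K$ with local coefficients. Properness only requires a weak equivalence of $C^e$-comodules (Lemma \ref{lem:properfinite}), not a weak equivalence in $\ptdco$, so no appeal to the extended Adams theorem is needed. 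If you wish to salvage your route, replace your collapse by the quotient of $K^{\dagger}$ by a spanning tree and then verify, via the many-object version of the Adams theorem, that the resulting zig-zag consists of weak equivalences in $\ptdco$; but that is substantially heavier machinery than the paper's self-contained bicomodule argument.
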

 
\begin{proof}
We prove $C_*(K,b;k)$ is weakly equivalent as a $C_*(K,b;k)$-bicomodule to a $C_*(K,b;k)$-bicomodule whose underlying graded $k$-module is finitely presented and we will be done by Lemma \ref{lem:computeadjoint}. Since $K$ has a finite number of simplices, $C^{\Delta}_*(K;k)$ is finitely presented as a graded $k$-module. Since $K$ is connected, any choice $\mathcal{O}$ as in \ref{choiceofpaths}, gives rise to a quasi-isomorphism of dg coalgebras 
 \[ \rho_{\mathcal{O}} \colon C^{\Delta}_*(K;k) \xrightarrow{j} C_*(K;k) \xrightarrow{g_{\mathcal{O}}} C_*(K,b;k),\]
 where $j$ and $g_{\mathcal{O}}$ are as defined above. For simplicity, write $C= C_*(K,b;k)$ when considered as a dg coalgebra. We equip $C^{\Delta}_*(K;k)$ with a $C$-bicomodule structure using its dg coalgebra structure followed by $\rho_{\mathcal{O}}$. We now observe the induced map 
 \[ F_{C}(\rho_{\mathcal{O}}) \colon  C^{\Delta}_*(K;k) \otimes^{\tau^e} (\Omega C)^{e} \to C_*(K,b;k) \otimes^{\tau^e} (\Omega C)^{e} \]
 is a quasi-isomorphism, which, by definition, means $\rho_{\mathcal{O}}$ is a weak equivalence of $C$-bicomodules. In fact,  the above map preserves the convergent filtrations given by $\mathcal{F}_p =  \big( \bigoplus_{q \leq p} C^{\Delta}_q(K;k)\big) \otimes^{\tau^e} (\Omega C)^{e} $ and $\mathcal{F}'_p=  \big(\bigoplus_{q \leq p} C_q(K,b;k)\big) \otimes^{\tau^e} (\Omega C)^{e} $. On the $E_2$-page of the corresponding spectral sequences, the map induced by 
$F_{C}(\rho_{\mathcal{O}})$ is precisely the classical isomorphism between the simplicial homology of $K$ and the singular homology of $|K|$ both with coefficients in the local system $H_*((\Omega C)^{e})$. 
\end{proof}

We say $K$ is an \textit{oriented Poincaré duality complex of formal dimension $n$} if $K$ is a connected finite simplicial complex equipped with a cycle $\alpha_K \in C_n(K;k)$ such that, for any vertex $b$ in $K$ and any left $k[\pi_1(K,b)]$-module $\ell$,
the cap product with $\alpha_K$ induces a degree $n$ quasi-isomorphism
\[ C^*_{\Delta}(K;\ell) \xrightarrow{\simeq} C_{n-*}^{\Delta}(K;\ell)\]
from simplicial cochains to chains with coefficients in $\ell$ (using the notation introduced in the proof of Proposition \ref{prop: proper}.) Triangulated oriented closed manifolds are examples of oriented Poincaré duality complexes, see \cite[Theorem 10.2]{spanier1993singular} in the case $X = A$ is closed and $B = \emptyset$. We call $[\alpha_K] \in H_n(K;k)$ the fundamental class of $K$. The chain complexes $C^*_{\Delta}(K;\ell)$ and $C_{*-n}^{\Delta}(K;\ell)$ are given by the twisted tensor products $C^*_{\Delta}(K;k) \otimes^{p_0 \circ \tau} \ell$ and $C_*^{\Delta}(K;k) \otimes^{p_0 \circ \tau} \ell$, respectively, where $\tau$ is the twisting cochain given by the composition \[\tau \colon C_*^{\Delta}(K;k) \xrightarrow{j}  C_*(K;k) \xrightarrow{g_{\mathcal{O}}} C_*(K,b;k) \to \Omega C_*(K,b;k),\] the last map being the universal twisting cochain, and \[p_0 \colon \Omega C_*(K,b;k) \to H_0( \Omega C_*(K,b;k)) \cong k[\pi_1(K,b)]\] is the canonical projection. 

We now take into account a natural dg bialgebra structure on $\Omega C_{*}(K,b;k)$ to lift Poincaré duality to a chain level homotopy coherent map of dg bicomodules over the dg coalgebra singular chains.
Recall that if $C$ is the dg coalgebra of normalized simplicial chains on any simplicial set $S$ then there is a natural co-action of the surjection operad (a particular model for the dg $E_{\infty}$-operad) extending the dg coassociative coproduct of $C$. If $S$ is reduced (i.e. $S_0$ is a singleton), the $E_2$ part of this structure induces a coproduct 
\[\nabla \colon \Omega C \to \Omega C \otimes \Omega C\]
making $\Omega C$ into a dg bialgebra. Another way of describing this coproduct structure is by identifying $\Omega C$ with the normalized cubical chains on certain cubical set and using the cubical diagonal approximation. In the many object case, i.e. when $S$ has an arbitrary number of vertices, this structure may be interpreted as dg coalgebra enrichment on the category $\Omega C$. See \cite[Section 4.2]{rivera2022algebraic} for more details regarding this structure. 

In general, the dg bialgebra $\Omega C$ might not have the property of being a dg Hopf algebra, i.e. an antipode might not exist. However, we may formally invert all $1$-simplices $\sigma \in S_1$ in $\Omega C$, which satisfy $\nabla [\sigma ] = [\sigma] \otimes [\sigma]$, to obtain a new dg bialgebra, denoted by $\widehat \Omega C$, that has the property of being a dg Hopf algebra and the square of the antipode is chain homotopic to the identity map  \cite[Theorem 23]{rivera2022algebraic}. Furthermore, it turns out that $\widehat \Omega C$ is quasi-isomorphic, as a dg bialgebra, to the singular chains on the space of based (Moore) loops in $|S|$. If $S$ is a pointed \textit{Kan complex}, then the natural map of dg bialgebras $\Omega C \to \widehat{\Omega}C$ is a quasi-isomorphism. We recall the following classical observation \cite{bichon, malm, rivera2022algebraic}. 

\begin{lemma}\label{lem:hopf}
  Let $A$ be a dg Hopf algebra with coproduct $\nabla \colon A \to A \otimes A$, counit  $\varepsilon \colon A \to k$, and antipode $s \colon A \to A$. For any dg $A$-bimodule $M$, there are isomorphisms in the derived category of $k$-chain complexes
\begin{eqnarray}
A \otimes^L_{A^e} M \simeq k \otimes_A^L M_{\text{ad}}
\end{eqnarray} 
and, if $s^2=s \circ s \colon A \to A$ is chain homotopic to the identity map,  
\begin{eqnarray}
   \RHom_{A^e}(A,M) \simeq  \RHom_A(k,M_{\text{ad}}),
\end{eqnarray}
where $k$ is considered as a left $A$-module via the counit $\varepsilon \colon A \to k$ and $M_{\text{ad}}$ is $M$ considered as a left $A$-module via the action \[a \cdot m = \sum (-1)^{|a'||a''|} a''ms(a'),\]
writing $\nabla(a)=\sum a' \otimes a''$.   
\end{lemma}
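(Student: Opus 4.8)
The plan is to realize both identifications as instances of base change along a single homomorphism of dg algebras $\delta\colon A\to A^e$ that encodes the adjoint action, and then to use that for a Hopf algebra the multiplication bimodule $A$ is (co)induced from the counit module $k$ along $\delta$. Concretely, I would set $\delta(a)=\sum(-1)^{|a'||a''|}a''\otimes s(a')$, viewed in $A^e=A\otimes A\op$, so that restriction of scalars $\delta^{*}$ sends an $A^e$-module (that is, an $A$-bimodule) $M$ to precisely $M_{\mathrm{ad}}$. The entire lemma then reduces to the statement that $A\simeq\delta_!k$ together with the standard adjunctions attached to $\delta$.

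First I would check that $\delta$ is a morphism of dg algebras: this uses that $s$ is a graded anti-homomorphism and that $\nabla$ is a map of algebras, with the Koszul signs arranged exactly so that the graded antipode axioms apply; granting this, $\delta^{*}M=M_{\mathrm{ad}}$ holds by definition of the adjoint action. The key step is then to show that the multiplication map $\mu\colon A^e\to A$, $a\otimes b\mapsto ab$, which is $A^e$-linear and is the augmentation of the bar resolution, exhibits $A$ as the induced module $\delta_!k=A^e\otimes_A k$; equivalently, that $A^e$ is free as a one-sided $A$-module via $\delta$, with an explicit inverse assembled from $\nabla$ and $s$. This is a form of the fundamental theorem of Hopf modules (bijectivity of the canonical Galois map), and it is where the Hopf structure is genuinely used. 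Granting it, the first isomorphism follows by the projection formula: since $A^e$ is free, hence flat, over $A$ via $\delta$, the functor $\delta_!=A^e\otimes_A-$ is exact and, being left adjoint to the exact functor $\delta^{*}$, carries a projective resolution of $k$ to one of $A$, so
$$A\otimes^{L}_{A^e}M\simeq (\delta_!k)\otimes^{L}_{A^e}M\simeq k\otimes^{L}_{A}\delta^{*}M=k\otimes^{L}_{A}M_{\mathrm{ad}}.$$

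For the second isomorphism I would instead invoke the adjunction $\delta_!\dashv\delta^{*}$, which after deriving gives $\RHom_{A^e}(A,M)\simeq\RHom_{A^e}(\delta_!k,M)\simeq\RHom_A(k,\delta^{*}M)$. The subtlety, and the reason the hypothesis $s^{2}\simeq\mathrm{id}$ enters only here, is that the adjoint module naturally produced on the $\RHom$ side is built using the \emph{inverse} antipode $s^{-1}$ rather than $s$; one then fixes a chain homotopy $s^{2}\simeq\mathrm{id}$, equivalently $s^{-1}\simeq s$, to identify it with $M_{\mathrm{ad}}$ as defined. I expect the main obstacle to be precisely this homological-algebra crux—proving the (co)induction isomorphism $A\cong\delta_!k$ and tracking the Koszul signs through the graded antipode axioms—rather than the formal adjunction manipulations. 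An equivalent and perhaps more transparent route, which I would use to double-check the signs and the appearance of $s^{-1}$, is to write down the explicit comparison between the two-sided Hochschild complex $A^{\otimes\bullet}\otimes M$ computing $A\otimes^{L}_{A^e}M$ and the standard complex $A^{\otimes\bullet}\otimes M_{\mathrm{ad}}$ computing $k\otimes^{L}_A M_{\mathrm{ad}}$, with the chain isomorphism built from iterated coproducts and antipodes, and then to dualize this comparison for the $\RHom$ statement.
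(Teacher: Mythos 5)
First, a remark on the comparison itself: the paper does not prove this lemma — it quotes it as a ``classical observation'' with references — so your proposal can only be measured against the standard argument in the literature, which is indeed the one you are reconstructing (restriction along an algebra map $\delta\colon A\to A^e$ encoding the adjoint action, a Hopf--Galois freeness statement, and base change). The architecture is right, but there is a genuine gap in your key step, and it is a sidedness issue that cannot be waved away, because it is exactly where the hypothesis $s^2\simeq\mathrm{id}$ lives. With your $\delta(a)=\sum\pm\,a''\otimes s(a')$ (the convention forced by the paper's $M_{\mathrm{ad}}$), the multiplication map $\mu\colon A^e\to A$ does \emph{not} exhibit $A$ as the left-induced module $A^e\otimes_A k$: for $\mu$ to descend to that quotient one needs $\sum\pm\,a''s(a')=\varepsilon(a)1$, which is the antipode axiom for the \emph{co-opposite} coalgebra and is not a consequence of the Hopf axioms (only $\sum s(a')a''=\varepsilon(a)1=\sum a's(a'')$ hold). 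What is true unconditionally is the right-handed identification $k\otimes_A A^e\cong A$ as right $A^e$-modules, via $1\otimes(x\otimes y)\mapsto\pm\,yx$, whose well-definedness uses precisely $\sum s(a')a''=\varepsilon(a)1$; combined with freeness of $A^e$ as a \emph{left} $A$-module via $\delta$ (your Galois map, with inverse $x\otimes y\mapsto\sum\pm\,x''\otimes yx'$), this yields the Tor statement — note your displayed projection formula silently uses this right induction, contradicting your stated key lemma. For the $\RHom$ statement the adjunction $\delta_!\dashv\delta^*$ forces the left induction, and a direct computation gives $A^e\otimes_A k\cong{}_{s^2}A$, the bimodule $A$ with left action twisted by the algebra map $s^2$. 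This, rather than an $s$-versus-$s^{-1}$ discrepancy in the resulting adjoint action, is where $s^2\simeq\mathrm{id}$ enters; as written, your proof of the \emph{first} isomorphism invokes the identification that needs the hypothesis, while the hypothesis-free identification is the one on the other side.

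There is a second, smaller gap: in the dg setting, fixing a chain homotopy $s^2\simeq\mathrm{id}$ of underlying complexes does not by itself give an isomorphism ${}_{s^2}A\simeq A$ in $\De(A^e)$. The identity map ${}_{s^2}A\to A$ is not left $A$-linear, and a homotopy of complexes is not a map of bimodules; one needs the homotopy to interact with the multiplication (e.g.\ an $(s^2,\mathrm{id})$-derivation homotopy, which does exist in the paper's application to $\widehat\Omega C$), or one must run the comparison on bar/Hochschild complexes. The explicit complex-level comparison that you relegate to a final ``double-check'' paragraph is therefore not a sanity check but the actual content of the proof of the second isomorphism.
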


We are now ready to prove the following extension of classical Poincaré duality.

\begin{theorem}\label{thm:poincare}
    Let $(K,b)$ be a pointed oriented Poincaré duality complex and $\alpha_K \in C^{\Delta}_n(K;k)$ a representative of the fundamental class.  The map \[- \frown \alpha_K \colon C^*_{\Delta}(K;k) \xrightarrow{\simeq} C_{n-*}^{\Delta}(K;k)\]
    is a map of right comodules over the dg $k$-coalgebra $C_*(K,b;k)$ and extends to a degree $n$ quasi-isomorphism
    \[ P_K \colon C^*_{\Delta}(K;k) \otimes^{\tau^e} (\Omega C_*(K,b;k) )^e  \xrightarrow{\simeq}  C_*^{\Delta}(K;k) \otimes^{\tau^e} (\Omega C_*(K,b;k) )^e .\]
\end{theorem}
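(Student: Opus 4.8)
The plan is to establish two things: first, that the classical cap product map $-\frown\alpha_K$ is a map of right $C$-comodules (where $C=C_*(K,b;k)$), and second, that this comodule map extends to the desired bicomodule quasi-isomorphism after tensoring with $(\Omega C)^e$. I will organize the argument so that the bulk of the work reduces to the already-established machinery of Theorem \ref{thm:poincareintro} and Proposition \ref{prop: proper}, together with the Hopf-algebra comparison of Lemma \ref{lem:hopf}.

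\textbf{Comodule structure of the cap product.} First I would verify the comodule claim by unwinding the definition of the cap product in terms of the Alexander--Whitney coproduct. For a cochain $f$ and the fundamental cycle $\alpha_K$ with Sweedler notation $\Delta(\alpha_K)=\sum\alpha_K^{(1)}\otimes\alpha_K^{(2)}$, the cap product is $f\frown\alpha_K=\sum\pm f(\alpha_K^{(1)})\,\alpha_K^{(2)}$, applied iteratively through the coproduct. The right $C$-comodule structure on $C_{n-*}^{\Delta}(K;k)$ comes from pushing chains forward along $\rho_{\mathcal{O}}$, and the right $C$-comodule structure on cochains $C^*_{\Delta}(K;k)$ is the dual structure from Lemma \ref{lem:dualcomodule}. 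The compatibility with the respective coactions then follows from coassociativity of the Alexander--Whitney diagonal, since both the coaction and the cap product are built from the same coproduct. This step is a direct, if notationally involved, calculation with Sweedler notation and signs.

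\textbf{Extension to a bicomodule quasi-isomorphism.} Next, having the underlying comodule map, I would invoke the structure established in the proof of Proposition \ref{prop: proper}: the twisted tensor product with $(\Omega C)^e$ carries a convergent filtration whose associated graded recovers ordinary (co)homology with coefficients in the local system $H_*((\Omega C)^e)$. The map $P_K$ preserves these filtrations, so it suffices to check that $P_K$ induces an isomorphism on the $E_2$-page. But on $E_2$ the map becomes exactly the classical cap product with the fundamental class, taking values in (co)homology with coefficients in the $\pi_1(K,b)$-module $H_*((\Omega C)^e)$. Here is where I would use the defining Poincaré duality hypothesis together with Lemma \ref{lem:hopf}: the bimodule coefficients $(\Omega C)^e$, viewed adjointly, become a one-sided local system, and classical twisted Poincaré duality with those coefficients is precisely the quasi-isomorphism $C^*_{\Delta}(K;\ell)\xrightarrow{\simeq}C_{n-*}^{\Delta}(K;\ell)$ assumed in the definition of an oriented Poincaré duality complex. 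A standard spectral sequence comparison argument then upgrades the $E_2$-isomorphism to a quasi-isomorphism of the total complexes.

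\textbf{Main obstacle.} The hard part will be correctly identifying the $E_2$-page coefficient system and reconciling the two-sided (bimodule) coefficients $(\Omega C)^e$ appearing in $P_K$ with the one-sided local coefficients $\ell$ in the definition of Poincaré duality. This is exactly the reduction for which the dg bialgebra, and ultimately dg Hopf algebra, structure on $\widehat{\Omega}C$ is needed: Lemma \ref{lem:hopf} converts the $A^e$-module $(\Omega C)^e$ into the adjoint left $A$-module, i.e.\ a genuine local system on $K$, so that the assumed duality applies. Verifying that the adjoint action is well-defined up to the required chain homotopy (using that $s^2$ is chain homotopic to the identity for $\widehat{\Omega}C$) and that passing from $\Omega C$ to $\widehat{\Omega}C$ does not affect the relevant (co)homology are the delicate points; the sign bookkeeping in the comodule-compatibility step is tedious but routine by comparison.
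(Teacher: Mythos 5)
There is a genuine gap, and it sits at the exact point the paper's proof is designed to get around. Your spectral-sequence step presupposes that $P_K$ already exists as a filtration-preserving chain map on the two-sided twisted complexes, with the implicit candidate being $(-\frown\alpha_K)\otimes \mathrm{id}_{(\Omega C)^e}$. But that naive extension is \emph{not} a chain map: the differential of $C^*_{\Delta}(K;k)\otimes^{\tau^e}(\Omega C_*(K,b;k))^e$ has two twisting terms, one built from the right $C$-coaction and one from the left $C$-coaction, and while $-\frown\alpha_K$ is a strict map of \emph{right} $C$-comodules, it does not strictly commute with the \emph{left} coaction (the paper notes this explicitly at the start of its proof). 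So the left twisting term fails to commute with your map, and there is no map on which to run the spectral sequence comparison. Constructing $P_K$ at all is the main content of the theorem, not a formality to be checked on an $E_2$-page.

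Your instinct that Lemma \ref{lem:hopf} must reconcile two-sided with one-sided coefficients is correct, but it has to be deployed \emph{before} any comparison argument, not after. The paper's route: (i) since $-\frown\alpha_K$ \emph{is} a strict right comodule map, it gives an honest chain map $E^*\otimes^{\tau}L \to E\otimes^{\tau}L$ of \emph{one-sided} twisted tensor products for any left dg $\widehat{\Omega}C$-module $L$; (ii) Lemma \ref{lem:locsytems} (a spectral-sequence argument close to the one you sketch, but for one-sided complexes) upgrades the Poincaré duality hypothesis from $\pi_1$-modules to arbitrary non-negatively graded dg $\widehat{\Omega}C$-modules $L$, so this chain map is a quasi-isomorphism; (iii) taking $L=(\widehat{\Omega}C\otimes\widehat{\Omega}C)_{\mathrm{ad}}$ and using Lemma \ref{lem:hopf} (this is where $s^2\simeq \mathrm{id}$ enters) together with the free resolutions $\widehat{\Omega}C\otimes^{\tau}E$ of $k$, one identifies the source and target of this one-sided quasi-isomorphism with $E^*\otimes^{\tau^e}(\Omega C)^e$ and $E\otimes^{\tau^e}(\Omega C)^e$; the composite is $P_K$. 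In other words, the adjoint-coefficient detour is what \emph{produces} the map whose existence your argument takes for granted. To repair your proposal you would have to either construct $P_K$ by this route (at which point your $E_2$ comparison becomes redundant, being subsumed by Lemma \ref{lem:locsytems}), or exhibit explicit homotopy-correction terms making $(-\frown\alpha_K)\otimes\mathrm{id}$ into a chain map, which is not routine.
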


\begin{proof}
For simplicity write $C=C_*(K,b;k)$, $E=C_*^{\Delta}(K;k)$, and $E^*= C^*_{\Delta}(K;k)$ and consider the latter two as bicomodules over the coaugmented dg coalgebra $C$. The map induced by capping with a chain on the right is clearly a strict map of right $C$-comodules; however, it does not strictly preserve the left $C$-comodule structure. By Lemma \ref{lem:locsytems}, which will be proved below, the local coefficient version of Poincaré duality implies that for any non-negatively graded left dg $\widehat{\Omega} C$-module $L$ we have a degree $n$ quasi-isomorphism of dg $k$-modules
\[ - \frown \alpha_K \colon E^* \otimes^{\tau} L \xrightarrow{\simeq} E \otimes^{\tau} L.\]
The twisted tensor product in the domain of the above map is a model for the derived hom space $\RHom_{\widehat\Omega C}(k,L)$ and the range is a model for the derived tensor product $k \otimes^{L}_{\widehat \Omega C} L$. In fact, we have that $\widehat \Omega C \otimes^{\tau} E$ (and $E \otimes^{\tau}  \widehat \Omega C$) is a free resolution of $k$ as a left (right) dg $\widehat\Omega C$-module and there are natural isomorphisms of dg $k$-modules
\[\uHom_{\widehat \Omega C}(\widehat \Omega C \otimes^{\tau} E, L) \cong E^* \otimes^{\tau} L \]
and
\[ (  E \otimes^{\tau}  \widehat \Omega C) \otimes_{\widehat\Omega C}\widehat\Omega C \cong E \otimes^{\tau} L.\] 
Applying the above to $L=(\widehat\Omega C \otimes \widehat\Omega C)_{\text{ad}}$
we obtain a degree $n$ quasi-isomorphism
\[  \uHom_{\widehat \Omega C}(\widehat \Omega C \otimes^{\tau} E, (\widehat\Omega C \otimes \widehat\Omega C)_{\text{ad}} )\xrightarrow{\simeq} E \otimes^{\tau} (\widehat\Omega C \otimes \widehat\Omega C)_{\text{ad}}\]
induced by capping with $\alpha_M$. Finally, we obtain the desired quasi-isomorphism by noting that Lemma \ref{lem:hopf} provides natural quasi-isomorphisms
\[ \uHom_{\widehat \Omega C}(\widehat \Omega C \otimes^{\tau} E, (\widehat\Omega C \otimes \widehat\Omega C)_{\text{ad}} )\simeq \uHom_{(\widehat \Omega C)^e}(E \otimes^{\tau^e} (\Omega C)^e, (\widehat \Omega C)^e)  \cong 
E^* \otimes^{\tau^e} (\Omega C)^e\]
and
\[ E \otimes^{\tau} (\widehat\Omega C \otimes \widehat\Omega C)_{\text{ad}} 
\cong E \otimes^{\tau^e} (\Omega C)^e.
\qedhere
\]
\end{proof}
\begin{lemma}\label{lem:locsytems} Let $\tau \colon C \to A$ be a twisting cochain from a dg coalgebra to a non-negatively graded dg algebra and note by $p_0 \colon A \to H_0(A)$ the canonical projection. If a map $f \colon E \to E'$ of non-negatively graded right dg $C$-comodules induces a quasi-isomorphism \[f \otimes \text{id}_{\ell} \colon \colon E \otimes^{p_0 \circ \tau} \ell \xrightarrow{\simeq} E' \otimes^{p_0 \circ \tau} \ell \] for all left $H_0(A)$-modules $\ell$, then it induces a quasi-isomorphism \[f \otimes \text{id}_L \colon E \otimes^{\tau} L \xrightarrow{\simeq} E' \otimes^{\tau} L\] for all dg $A$-modules $L$.
\end{lemma}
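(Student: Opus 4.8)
The plan is to reduce the statement for an arbitrary dg $A$-module $L$ to the single free module $L = A$, and then to deduce the case $L = A$ from the hypothesis by a homological degree filtration whose $E^1$-page records exactly the $H_0(A)$-twisted tensor products appearing in the assumption. Throughout I use that $E$ and $E'$ are non-negatively graded and $k$-free (as they are in the application, being normalized cochains of a finite complex), and that $A$ is non-negatively graded.

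First I would record the associativity identity $E \otimes^\tau L \cong (E \otimes^\tau A) \otimes_A L$, where $E \otimes^\tau A$ is regarded as a dg right $A$-module via right multiplication on the $A$-factor; this is immediate by comparing the twisted differentials, the $\mathrm{id}\otimes d_A$ term being absorbed into $d_L$ upon tensoring. Since $E$ is non-negatively graded and $k$-free, $E \otimes^\tau A$ is a \emph{semifree} right $A$-module: filtering its generators $E \otimes 1$ by homological degree, the differential of a generator lands in strictly lower filtration tensored with $A$, because both $d_E$ and the twist $e \mapsto \sum e_{(0)} \otimes \tau(e_{(1)})$ strictly lower the $E$-degree. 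The same holds for $E'$, and $f \otimes \mathrm{id}_A$ is a morphism of semifree right $A$-modules. Granting the key claim below that $f \otimes \mathrm{id}_A$ is a quasi-isomorphism, it is then a homotopy equivalence of cofibrant right $A$-modules, and applying $- \otimes_A L$, which preserves chain homotopies, shows $f \otimes \mathrm{id}_L$ is a homotopy equivalence, hence a quasi-isomorphism, for \emph{every} dg $A$-module $L$.

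It remains to prove the key claim that $f \otimes \mathrm{id}_A \colon E \otimes^\tau A \to E' \otimes^\tau A$ is a quasi-isomorphism, and here I would filter by homological $E$-degree, setting $F_p = \big(\bigoplus_{q \le p} E_q\big) \otimes^\tau A$. This is a subcomplex since every component of the differential either preserves the $E$-degree ($\mathrm{id} \otimes d_A$) or strictly lowers it ($d_E$ and the twist $d_\tau$). The filtration is bounded below ($F_{-1} = 0$) and, as $E$ and $A$ are non-negatively graded, finite in each total degree, so the associated spectral sequence converges. On $E^0$ only $\mathrm{id} \otimes d_A$ survives, giving $E^1 = E \otimes H_*(A)$; the induced $d^1$ is the part of the differential lowering $E$-degree by exactly one, namely $d_E \otimes \mathrm{id}$ together with the weight-one piece of the twist, in which $\tau(e_{(1)})$ with $|e_{(1)}| = 1$ lands in $A_0$ and acts through its class $p_0\tau(e_{(1)}) \in H_0(A)$. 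Thus $(E^1, d^1) \cong \bigoplus_n E \otimes^{p_0 \circ \tau} H_n(A)$, each $H_n(A)$ viewed as a left $H_0(A)$-module, while twist terms with $|e_{(1)}| \ge 2$ raise the $A$-homological degree and feed only into higher differentials. Applying the hypothesis with $\ell = H_n(A)$ shows $f$ induces an isomorphism on the $E^2$-page, and convergence gives the claim.

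The main obstacle I anticipate is the bookkeeping identifying the $d^1$-differential with the $p_0 \circ \tau$-twisted differential: one must verify that, after passing to $H_*(A)$, the \emph{only} surviving first-order contribution of the twist is through the degree-zero classes, so that the $E^1$-complex is exactly the family $E \otimes^{p_0 \circ \tau} H_n(A)$ to which the hypothesis applies, rather than merely a complex assembled from such pieces. Convergence is routine given the non-negative grading (a finite filtration in each total degree), and the semifreeness and homotopy-invariance step is standard; the crux is thus the spectral-sequence comparison in the key claim.
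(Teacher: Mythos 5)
Your proof is correct, and its computational core---filtering the twisted tensor product by $E$-degree and identifying the first page of the resulting spectral sequence with the $p_0\circ\tau$-twisted tensor products $E\otimes^{p_0\circ\tau}H_n(-)$, so that the hypothesis can be applied coefficientwise---is exactly the mechanism of the paper's proof. Where you diverge is in the architecture: the paper runs this spectral sequence directly on $E\otimes^\tau L$ for an \emph{arbitrary} dg $A$-module $L$ (the coefficients on the first page are then the $H_0(A)$-modules $H_n(L)$, and convergence holds because the filtration is bounded below and exhaustive), whereas you run it only for $L=A$ and then treat general $L$ by a formal reduction: $E\otimes^\tau A$ is semifree as a right $A$-module, a quasi-isomorphism between semifree modules is a homotopy equivalence, and $-\otimes_A L$ preserves homotopy equivalences under the associativity isomorphism $E\otimes^\tau L\cong(E\otimes^\tau A)\otimes_A L$. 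The paper's route is shorter and invokes no cofibrancy machinery; yours confines all spectral-sequence bookkeeping, including convergence, to the single degreewise finite complex $E\otimes^\tau A$, and then obtains arbitrary (even unbounded) $L$ purely by homotopy invariance. One caveat: your semifreeness step uses that $E$ and $E'$ are degreewise $k$-free, which is not a stated hypothesis of the lemma; this is harmless, both because it holds in the paper's application and because the paper's own identification of the $E_1$-page as $E\otimes^{p_0\circ\tau}H_*(L)$ likewise requires degreewise $k$-flatness of $E$ and $E'$ when $k$ is not a field.
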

 \begin{proof}   For any right dg $C$-comodule $E$ and any left dg $A$-module $L$, the twisted tensor product $E \otimes^{\tau} L$ has a canonical filtration given by 
 $\mathcal{F}_p= \bigoplus_{q \leq p} E_q \otimes L$. The differential $\partial \colon E \otimes^{\tau} L \to E \otimes^{\tau} L$ decomposes as a direct sum of three maps
 $\partial_E \otimes \text{id}_L \colon \mathcal{F}_p \to \mathcal{F}_{p-1}$,
 $\text{id}_{E} \otimes \partial_L \colon \mathcal{F}_p \to F_{p}$, and $\partial_{\tau} \colon \mathcal{F}_p \to \mathcal{F}_{p-1}$, where the latter uses the $C$-comodule structure of $E$, the twisting cochain $\tau$, and the $A$-module structure of $L$. Furthermore, $\partial_{\tau} \colon \mathcal{F}_p \to \mathcal{F}_{p-1}$ decomposes as a direct sum of maps
 $\partial_{\tau}^{1} \colon \mathcal{F}_p \to \mathcal{F}_{p-1}$ and $\partial_{\tau}^{2} \colon \mathcal{F}_p \to \mathcal{F}_{p-2}$, where $\partial_{\tau}^{1}$ corresponds to the summand $E_p \to E_{p-1} \otimes C_1$ in the $C$-coaction of $E$. 
 By examining the induced differential, we obtain that the $E_1$-page of the corresponding spectral sequence is a twisted tensor product of the form $C \otimes^{p_0 \circ \tau} H_*(L)$. Hence, if $f \colon E \to E'$ induces a quasi-isomorphism with respect to coefficients in any left $H_0(A)$-module, it induces a quasi-isomorphism on the $E_1$-page of the spectral sequence associated to the twisted tensor product with coefficients in any non-negatively graded left dg $A$-module.
 Since the filtration $\mathcal F$ is complete and exhaustive this implies that $f$ induces a quasi-isomorphism $E \otimes^{\tau} L \xrightarrow{\simeq} E' \otimes^{\tau} L$ for any left dg $A$-module $L$. 
\end{proof}

Coming back to Theorem \ref{thm:poincare}, denote $C=C_*(M;k)$ and $E=C^{\Delta}_*(M;k)$. Recall the coHochschild cochain complex is defined as 
\[ co\mathcal{CH}^*(E^*,C)=\uHom_{(\Omega C)^{e}}( E^* \otimes^{\tau^e} (\Omega C)^e, \Omega C ).\]
The natural quasi-isomorphism of $\Omega C$-bimodules $E \otimes^{\tau^e} (\Omega C)^e \xrightarrow{\simeq} \Omega C$ from Lemma \ref{lem:aec} gives rise to a quasi-isomorphism
\begin{eqnarray}\label{coHH^*} \Phi \colon  \uHom_{(\Omega C)^{e}}(E^* \otimes^{\tau^e} (\Omega C)^e, E \otimes^{\tau^e} (\Omega C)^e) \xrightarrow{\simeq} \coHHcx^*(E^*,C).
\end{eqnarray}
The next result uses the fact that, for any connected pointed simplicial complex $(K,b)$, there are natural isomorphisms
\[\coHH_*(C_*(K,b;k)) \cong H_*(L|K|;k)\]
and
\[ \coHN_*(C_*(K,b;k)) \cong H^{S^1}_*( L|K|;k),\]
where $L|K|$ is the free loop space on $K$, and $H^{S^1}_*(L|K|)$ denotes the $S^1$-equivariant homology of $L|K|$ equipped with the $S^1$-action given by rotating loops
\cite{rivera2024cyclichomologycategoricalcoalgebras, rivera2022algebraic}.

\begin{proposition} \label{prop:mfdpropercy}
Let $(K,b)$ as in Theorem \ref{thm:poincare}. Denote $C=C_*(K,b;k)$ and $E=C^{\Delta}_*(K;k)$. Then the chain map
\[ P_K \colon E^* \otimes^{\tau^e} (\Omega C)^e \xrightarrow{\simeq}  E \otimes^{\tau^e} (\Omega C)^e\]
of Theorem \ref{thm:poincare} determines a proper Calabi-Yau structure on $C$.
\end{proposition}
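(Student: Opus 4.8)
The plan is to extract a non-degenerate coHochschild cycle from $P_K$ and then to lift it along the reduction $\coHNcx_*(C) \to \coHHcx_*(C)$.

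First I would set up the duality. Recall from the proof of Proposition~\ref{prop: proper} that $E = C^{\Delta}_*(K;k)$ is free of finite rank over $k$ and weakly equivalent to $C$ as a $C$-bicomodule via $\rho_{\mathcal{O}}$. Lemma~\ref{lem:computeadjoint} then yields weak equivalences of $C$-bicomodules $E^* \simeq C^\vee$ and $E \simeq C$, as well as $F_{C^e}(E^*) \simeq (\Omega C)^!$ and $F_{C^e}(E) \simeq \Omega C$ (the latter by Lemma~\ref{lem:aec}). In particular $P_K$ is a degree $n$ cycle of the hom complex $\uHom_{(\Omega C)^e}(F_{C^e}E^*, F_{C^e}E)$. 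Post-composing with the quasi-isomorphism $\Phi$ of~\eqref{coHH^*} and using the identification $\coHHcx^*(E^*,C) = \uHom_{(\Omega C)^e}(F_{C^e}E^*, \Omega C) \simeq \RHom_{(\Omega C)^e}((\Omega C)^!, \Omega C) \simeq \coHHcx_*(C)$, where the last step is Proposition~\ref{prop:cohhishh}, I obtain a cycle $\beta \coloneqq \Phi(P_K) \in \coHHcx_n(C)$. This is exactly the Hochschild cycle attached to the class of $P_K$ under Lemma~\ref{lem:hochschildmap}.

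Because $P_K$ is a quasi-isomorphism, the associated morphism $\phi(\beta)\colon C^\vee \to C[n]$ in $\Dco(C^e)$ is an isomorphism in the coderived category, hence a weak equivalence; thus $\beta$ is non-degenerate and already defines a \emph{weak} proper $n$-Calabi-Yau structure in the sense of Definition~\ref{def:cyproper}. It then remains to lift $\beta$ to a cycle $\nu \in \coHNcx_n(C)$ in the negative cocyclic complex. Since non-degeneracy of the $u^0$-component depends only on its class in $\coHH_n(C)$, it suffices to lift the \emph{class} $[\beta]$ along $\coHN_n(C) \to \coHH_n(C)$. For this I would invoke the identifications $\coHH_n(C) \cong H_n(L|K|)$ and $\coHN_n(C) \cong H^{S^1}_n(L|K|)$ recalled before the statement. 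Tracing through $\Phi$ and the definition of $P_K$, one sees that $\beta$ is represented at the chain level by $\alpha_K^{(1)}[\alpha_K^{(2)}]$ (Sweedler notation for the coproduct of $\alpha_K$), exactly as in the Lie algebra computation of Proposition~\ref{prop:liepd}; under $\coHH_n(C)\cong H_n(L|K|)$ this is the image of the fundamental class $[\alpha_K]$ under the inclusion of constant loops $c\colon K \hookrightarrow L|K|$. As $c$ is $S^1$-equivariant for the trivial $S^1$-action on $K$, the class $[\alpha_K]$ lifts tautologically to $H^{S^1}_n(K)$, and its image in $H^{S^1}_n(L|K|)\cong \coHN_n(C)$ provides the desired lift; equivalently, the Connes operator $B$ satisfies $B[\beta]=0$ because $\beta$ is carried by the $S^1$-fixed constant loops. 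Any negative cocyclic cycle $\nu$ representing this lift then furnishes a proper $n$-Calabi-Yau structure on $C$.

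I expect the genuine obstacle to be this last step. At the chain level $B(\alpha_K^{(1)}[\alpha_K^{(2)}]) = [\alpha_K^{(1)}|\alpha_K^{(2)}]\pm[\alpha_K^{(2)}|\alpha_K^{(1)}]$ is only a boundary (rather than strictly zero as in the cocommutative Lie case), so producing the genuine cyclic lift requires the coherent homotopy-cocommutativity of singular chains coming from the $E_\infty$-coalgebra structure discussed before Theorem~\ref{thm:poincare}; the clean way to organize all the higher terms is precisely the $S^1$-equivariant identification $\coHN_n(C)\cong H^{S^1}_n(L|K|)$ together with the triviality of the $S^1$-action on constant loops. The production of the non-degenerate coHochschild cycle, by contrast, is formal once $E^*\simeq C^\vee$ and the quasi-isomorphism $\Phi$ are in hand.
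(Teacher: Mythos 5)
Your proposal is correct and follows essentially the same route as the paper's proof: extract the coHochschild class from $P_K$ via Lemma \ref{lem:computeadjoint} and Lemma \ref{lem:hochschildmap}, observe non-degeneracy since $P_K$ is a quasi-isomorphism, identify the class with the image of the fundamental class under the constant-loop inclusion $H_*(|K|;k) \to H_*(L|K|;k) \cong \coHH_*(C)$, and lift it to $\coHN_*(C) \cong H_*^{S^1}(L|K|;k)$ using that constant-loop classes lift to $S^1$-equivariant homology. Your additional remarks on the chain-level representative and the Connes operator are consistent elaborations rather than a different argument.
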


\begin{proof}
By Lemma \ref{lem:computeadjoint} $E^* \simeq C^\vee$ as a $C$-bicomodule. Thus by Lemma \ref{lem:hochschildmap} the map $P_K$ determines a class in $\coHH_*(C)$ defining a weak proper Calabi-Yau structure on $C$.  This class coincides with the image of the fundamental class of $K$  under the composition of maps
\[ H_*(|K|;k) \rightarrow H_*(L|K|;k) \cong \coHH_*(C). \]
where the first map is induced by the constant loops inclusion map $|K| \hookrightarrow L|K|$. Since homology classes of constant loops lift  to $S^1$-equivariant homology, and $H_*^{S^1}(L|K|;k) \cong \coHN_*(C)$ \cite{rivera2024cyclichomologycategoricalcoalgebras},
this weak Calabi-Yau structure lifts to a Calabi-Yau structure. 
\end{proof}

\begin{corollary} \label{cor:CYonK}
Let $(K,b)$ be as in Theorem \ref{thm:poincare}. The fundamental class of $K$ gives rise to a proper Calabi-Yau structure on the coaugmented dg coalgebra of singular chains on $K$ and, consequently, a smooth Calabi-Yau structure on the dg algebra of singular chains on the space $\Omega_b K$ of (Moore) loops in $|K|$ based at $b$. 
\end{corollary}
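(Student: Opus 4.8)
The plan is to assemble this corollary from three results already in hand: the proper Calabi-Yau structure on the reduced chain coalgebra produced in Proposition \ref{prop:mfdpropercy}, the Koszul-duality interchange of Theorem \ref{thm:main}, and the Adams-type identification of the cobar construction with loop-space chains. There is essentially no new computation; the work lies in checking that the relevant weak equivalences transport the Calabi-Yau class, not merely its existence.

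First I would take the coaugmented dg coalgebra of singular chains on $K$ to be the pointed conilpotent chain coalgebra $C := C_*(K,b;k)$ (the basepoint supplies the coaugmentation), so that the first assertion is exactly the content of Proposition \ref{prop:mfdpropercy}: the fundamental class $[\alpha_K]$, lifted along $H_*(|K|;k) \to H_*(L|K|;k) \cong \coHH_*(C)$ and further to $\coHN_*(C)$, is a proper $n$-Calabi-Yau structure on $C$. (If one instead insists on the unpointed coalgebra $C_*(K;k)$, the quasi-isomorphism $i \colon C_*(K,b;k) \to C_*(K;k)$ of conilpotent dg coalgebras is a weak equivalence in the sense of Theorem \ref{thm:algebrakoszul}, so Corollary \ref{cor:properwelldefined} transports both the properness and the structure.)

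Next I would invoke Theorem \ref{thm:main} directly: since $C$ is proper, a proper $n$-Calabi-Yau structure on $C$ is equivalent to a smooth $n$-Calabi-Yau structure on the dg algebra $\Omega C = \Omega C_*(K,b;k)$. Concretely the correspondence runs through the identification $\coHNcx_n(C) \simeq \HNcx_n(\Omega C)$ of Proposition \ref{prop:cohhishh} together with the matching of the bimodule equivalences $C^\vee \simeq C[n]$ and $(\Omega C)^! \simeq \Omega C[n]$ provided by Lemma \ref{lem:omegacshriek}. Finally I would transport this structure along the dg-algebra quasi-isomorphism $\Omega C_*(K,b;k) \xrightarrow{\simeq} C_*(\Omega_b K;k)$ extending Adams' cobar theorem to arbitrary connected spaces \cite{rivera-zeinalian, riveracobar}. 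Because smoothness is invariant under quasi-equivalence (as used in Proposition \ref{prop:smoothproper}) and a quasi-isomorphism of dg algebras induces an isomorphism on negative cyclic homology and an equivalence of the derived categories of bimodules, the smooth $n$-Calabi-Yau structure descends to $C_*(\Omega_b K;k)$.

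The main obstacle is the last transport: one must be sure the Adams quasi-isomorphism carries the specific negative-cyclic class lifting the fundamental class, rather than producing some unrelated smooth structure. This reduces to the naturality of the Hochschild-to-negative-cyclic comparison underlying Proposition \ref{prop:mfdpropercy} together with the fact that the identification $\Omega C_*(K,b;k) \simeq C_*(\Omega_b K;k)$ is one of dg algebras, hence compatible with cyclic structures; granting these cited inputs, the corollary is a formal consequence and the genuine content sits in Proposition \ref{prop:mfdpropercy} and Theorem \ref{thm:main}.
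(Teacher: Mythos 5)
Your proposal is correct and follows essentially the same route as the paper: the first assertion is Proposition \ref{prop:mfdpropercy}, the passage to $\Omega C_*(K,b;k)$ is Theorem \ref{thm:main}, and the final step transports the structure along the Adams-type dg algebra quasi-isomorphism $\Omega C_*(K,b;k) \simeq C_*(\Omega_b K;k)$ of \cite{rivera-zeinalian, riveracobar}. The extra care you take with the pointed versus unpointed coalgebra and with the invariance of the negative cyclic class under quasi-isomorphism simply fills in details the paper leaves implicit.
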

\begin{proof}
By applying Theorem \ref{thm:main} to Proposition \ref{prop:mfdpropercy} we obtain a smooth Calabi-Yau structure on $\Omega C_*(K,b;k)$. Finally, we use that there is a natural quasi-isomorphism of dg algebras from $\Omega C_*(K,b;k)$ to the singular chains on $\Omega_bK$ \cite{rivera-zeinalian, riveracobar}.
\end{proof}
Finally, we put together the above results to prove Theorem \ref{thm:poincareintro}.
\begin{proof}[Proof of Theorem \ref{thm:poincareintro}] 

Suppose $K$ is a finite simplicial complex homotopy equivalent to $X$ and let $\alpha_K \in C^{\Delta}_n(K;k)$ represent the class $[\alpha_X] \in H_n(X;k)\cong H_n(K;k)$. First note that Theorem \ref{thm:main} implies (2) and (3) are equivalent. By Corollary \ref{cor:CYonK}, (1) implies (2). We show that (2) implies (1). If (2) is satisfied, the cycle $\alpha_K$ gives rise to  a quasi-isomorphism
\[ C^*_{\Delta}(K;k) \otimes^{\tau^e} (\Omega C_*(K,b;k) )^e  \xrightarrow{\simeq}  C_*^{\Delta}(K;k) \otimes^{\tau^e} (\Omega C_*(K,b;k) )^e.\] Since $\tau^e \colon C^*_{\Delta}(K;k)^e \to (\Omega C_*(K,b;k) )^e$ is the universal twisting cochain, we have that, for any left dg $(\Omega C_*(K,b;k) )^e$-module $L$, the cycle $\alpha_K$ gives rise to a quasi-isomorphism
\[ C^*_{\Delta}(K;k) \otimes^{\tau^e}  L \xrightarrow{\simeq}  C_*^{\Delta}(K;k) \otimes^{\tau^e} L. \]
In particular, any left $\pi_1(K,b)$-module $\ell$ may be regarded as a dg left $(\Omega C_*(K,b;k) )^e$-module $\ell^e$ with trivial differential and with action $(a \otimes b)\cdot x= p_0(a)x\varepsilon(b)$, where $a \otimes b \in (\Omega C_*(K,b;k) )^e$, $x \in \ell$, $p_0 \colon \Omega C_*(K,b;k) \to H_0(\Omega C_*(K,b;k))\cong \pi_1(K,b)$ is the natural projection map, and $\varepsilon \colon \Omega C_*(K,b;k) \to k$ the augmentation. We have isomorphisms of chain complexes
\[C^*_{\Delta}(K;k) \otimes^{\tau^e}  \ell^e \cong C^*_{\Delta}(K;k) \otimes^{\tau} \ell\]
and
\[ C_*^{\Delta}(K;k) \otimes^{\tau^e}  \ell^e \cong C_*^{\Delta}(K;k) \otimes^{\tau} \ell,\]
since the composition $C \xrightarrow{\tau} \Omega C \xrightarrow{\varepsilon} k$ is the zero map. Thus $\alpha_K$ gives rise to a quasi-isomorphism
\[C^*_{\Delta}(K;k) \otimes^{\tau} \ell \xrightarrow{\simeq} C_*^{\Delta}(K;k) \otimes^{\tau} \ell,\]
for any left $\pi_1(K,b)$-module $\ell$, as desired. 
\end{proof}

\printbibliography

\end{document}